\documentclass[reqno]{amsart}

\usepackage{amsmath,amssymb,latexsym,soul,cite,mathrsfs}
\usepackage{enumitem,graphicx}
\usepackage[dvipsnames]{xcolor}
\usepackage{soul}
\usepackage{xcolor, soul}
\usepackage{xcolor}
\usepackage{hyphenat}

\usepackage[colorlinks=true,urlcolor=blue,citecolor=red,linkcolor=blue,linktocpage,pdfpagelabels,bookmarksnumbered,bookmarksopen]{hyperref}
\usepackage[hyperpageref]{backref}

\newcommand{\ep}{\varepsilon}

\newcommand{\der}{\mathbb{D}}
\newcommand{\inte}{\mathrm{I}}
\newcommand{\mitt}{\mathscr{E}}
\newcommand{\nat}{\mathbb{N}}

\usepackage[left=2.5cm,right=2.5cm,top=2cm,bottom=2cm]{geometry}

\pretolerance=10000
\numberwithin{equation}{section}

\newtheorem{theorem}{Theorem}[section]
\newtheorem{lemma}[theorem]{Lemma}
\newtheorem{corollary}[theorem]{Corollary}

\newtheorem{remark}[theorem]{Remark}
\newtheorem{definition}[theorem]{Definition}
\newtheorem{example}[theorem]{Example}

\title{ On the $\varepsilon$-regular mild solution for fractional abstract integro-differential equations}

\author[J. Vanterler C. Sousa,  M. Aurora P. Pulido, V. Govindaraj and E. Capelas de Oliveira]{J. Vanterler C. Sousa,  M. Aurora P. Pulido, V. Govindaraj and E. Capelas de Oliveira}

\address[J. Vanterler da C. Sousa]{Universidade Federal do ABC, Centro de Matemática, Computação e Cognição, 09.210-580, Santo André/SP,  Brazil
}
\email{\href{mailto:jose.vanterler@ufabc.edu.br}{jose.vanterler@ufabc.edu.br}}

\address[M. Aurora P. Pulido]{Department of Applied Mathematics, Imecc-State University of Campinas, Campinas SP13083-859, Brazil}
\email{\href{mailto:ra211681@ime.unicamp.br}{ra211681@ime.unicamp.br}}

\address[V. Govindaraj ]{Department of Mathematics, National Institute of Technology Puducherry, Karaikal 609 609, India}
\email{\href{mailto:govindaraj.maths@gmail.com}{govindaraj.maths@gmail.com}}

\address[E. Capelas de Oliveira]{Department of Applied Mathematics, Imecc-State University of Campinas, Campinas SP13083-859, Brazil}
\email{\href{capelas@unicamp.br}{capelas@unicamp.br}}

\subjclass{34A08, 34A12; 35B65.}

\keywords{Fractional integro-differential equations; existence, regularity, continuous dependence; $\varepsilon$-regular mild solutions, interpolation–extrapolation scales.}
\begin{document}

\begin{abstract}

In this present paper, we first obtained some estimates involving parts of $\varepsilon$-regular mild solutions of the fractional integro-differential equation. In this sense, through these preliminary results, we investigate the main results of this paper, i.e., the existence, regularity and continuous dependence of $\varepsilon$-regular mild solutions for fractional abstract integro-differential equations in Banach space.
    
\end{abstract}
\maketitle
\section{Introduction and motivation}

This paper is concerned with the fractional integro-differential equation given by
\begin{equation}\label{Eq1}
\left\{ 
\begin{array}{cll}
^{H}\der_{0+}^{\alpha,\beta}\zeta(u) & = & \mathcal{A} \zeta(u)+\displaystyle\int_{0}^{u}g(u-s,\zeta(s))ds+f(u,\zeta(u)),\quad u>0 \\ 
\inte_{0+}^{1-\gamma}\zeta(0) & = & \zeta_0\in \mathcal{D}(\mathcal{A}),
\end{array}
\right.
\end{equation}
where $\mathcal{A}:\mathcal{D}(\mathcal{A})\subset \mathbb{B}_{0}\rightarrow \mathbb{B}_{0}$ is a linear operator such that, $\mathcal{A}$ is a sectorial operator, $\mathbb{B}_{0}$ is a Banach space and $g$ and $f$ are functions satisfying certain conditions. $^{H}\der_{0+}^{\alpha,\beta}(\cdot)$ is a Hilfer fractional derivative (HFD) of order $\alpha$ $(0<\alpha<1)$ and type $\beta$ $(0\leq\beta\leq 1)$ and $\inte_{0+}^{1-\gamma}(\cdot)$ is a Riemann-Liouville fractional integral (RLFI) of order $1-\gamma$ with $\gamma=\alpha+\beta(1-\alpha)$.

For $g(u-s,\zeta(s))=0$ in Eq. (\ref{Eq1}), we have the fractional differential equation (FDE) given by
\begin{equation*}
\left\{ 
\begin{array}{cll}
^{H}\der_{0+}^{\alpha,\beta}\zeta(u) & = & \mathcal{A} \zeta(u)+f(u,\zeta(u)),\quad u>0 \\ 
\inte_{0+}^{1-\gamma}\zeta(0) & = & \zeta_0\in \mathcal{D}(\mathcal{A}).
\end{array}
\right.
\end{equation*}

    What is the best fractional derivative to use in formulating a problem? And when we want to address only the spatial idea? And the temporal idea? It is not an easy and trivial task to know which is the best fractional derivative than the data of real problems, and which is the best fractional derivative that is chosen to perform an analytical analysis on solutions of differential equations. There are several interesting works involving applications via fractional derivatives that highlight exactly questions about looking at data \cite{ma,ma1,ma2,ma3,ma4}. Over the years with the vast number of definitions of fractional derivatives, it has become more difficult to choose a particular fractional derivative \cite{Kilbas,Miller,Podlubny}. In this sense, in 2018, Sousa and Oliveira \cite{Sousa}, motivated by the HFD and Riemann-Liouville fractional derivative (RLFD) with respect to another function, introduced the so-called $\psi$-Hilfer fractional derivative ($\psi$-HFD), which contains a wide class of particular cases existing fractional derivatives. From this $\psi$-HFD, it allowed and opened doors for numerous open questions, in particular, what is the best choice for the fractional derivative. Furthermore, it has drawn attention to the discussion involving the theory of differential equations. On the other hand, although the $\psi$-HFD allowed and opened up new ranges of options for research, there are still open problems that make it difficult to investigate some high-level issues, in particular the issues we will address in this this paper. Furthermore, it is worth noting that other generalized fractional operators have drawn attention with interesting applications, particularly involving numerical problems \cite{Baleanu,Baleanu2,Baleanu3}. In 2022 Baleanu and Shiri \cite{Baleanu}, discussed a system of fractional differential equations via discretized piecewise polynomial collocation methods to approximate the exact solution, in particular, using the fixed point theorem. For other work that deserves attention in the area of fractional differential equations, see  
    \cite{Baleanu4,Baleanu5,Baleanu6}.

The theory of fractional differential and integro-differential equations is well consolidated and allowed an important advance in the area of differential equations and fractional calculus itself \cite{Kilbas,Miller,Podlubny}. Investigating questions of existence, uniqueness, regularity, attractiveness of solutions of FDEs, has been the subject of research by numerous researchers \cite{Sousa3,Sousa4,Wang,Zhou}. It is remarkable the exponential growth that the theory of FDEs has gained in recent years, we can highlight some works and the references therein \cite{Li1,Liu,Wang10,Sousa11,Sousa10,Larrouy,Norouzi}.

In 1999, Arrieta et al. \cite{Arrieta}, investigated the existence, uniqueness and regularity of solutions for heat equations with nonlinear boundary conditions. In 2016 Andrade and Viana \cite{Andrade5} discussed the local existence, uniqueness, and continuous dependence of mild solutions of an abstract integro-differential equation of the form
\begin{equation*}
    u'=Au+\int_{0}^{t} g(u-s,u(s))ds+f(u,u(t)), \,\,t>0
\end{equation*}
with $u(0)=u_{0}\in D(A)$. In the following year  \cite{Andrade} discussed properties of Volterra integro-differential equations and performed some applications to parabolic models with memory. 

In 2018, Abadias et al. \cite{Abadias} investigated the existence, uniqueness and regularity of a class of abstract nonlinear integral equations of convolution type defined on a Banach space, given by
\begin{equation*}
    u(t)=\int_{0}^{t}a(t-s)[Au(s)+f(s,u(s))] ds,\,\, t\in [0,T],
\end{equation*}
where $A$ is a closed linear operator with dense domain $D(A)$ defined on a Banach space $X$, $u\in L^{1}_{loc}(\mathbb{R})$, $u_{0}\in\mathbb{R}$ and $f:\mathbb{R}_{+}\times X\rightarrow X$.
Other interesting works, can be seen in \cite{Andrade3,Teixeira,Komornik}.

In 2015, Li \cite{Li} investigate the regularity of mild solutions for fractional abstract Cauchy problem with order $\alpha\in (1,2)$, given by
\begin{equation*}
    D^{\alpha}_{t} u(t)+A u(t)= f(t),\,\, t\in (0,T]
\end{equation*}
with $u(0)=u_{0}$, $u'(0)=x_{1}$, where $f:[0,T)\rightarrow X$, $A\in C^{\alpha}(M,\omega)$, $x_{0},x_{1}\in D(A)$, $f\in L^{p}((0,T),X)$ with $p>1$ and $D^{\alpha}_{t}$ is the fractional derivative of order $0<\alpha<1$. Based on properties and analytic solution operator, Li obtain the sufficient condition under which a mild solutions becomes a classical solution, and if the Cauchy problem has an analytic solution operator. Other interesting results, can be seen in \cite{Andrade1,Andrade2,Carvalho20,Djilali,Paulo}.

Let $\alpha$ and $\beta$ be strictly positive real numbers. Then $\mathcal{E}_{\alpha,\beta}:\mathbb{C}\rightarrow\mathbb{C}$ is the Mittag-Leffler function (two parameters), given by \cite{Gorenflo}
\begin{equation}\label{mittagII}
    \mathcal{E}_{\alpha,\beta}(z)=\sum_{k=0}^{\infty} \frac{z^{k}}{\Gamma(\alpha z+\beta)}.
\end{equation}

For $\beta=1$ in Eq. (\ref{mittagII}), we have a one-parameter Mittag-Leffler function given by \cite{Gorenflo}
\begin{equation}\label{mittagI}
    \mathcal{E}_{\alpha}(z)=\sum_{k=0}^{\infty} \frac{z^{k}}{\Gamma(\alpha z+1)}.
\end{equation}

Also, $\beta=\alpha=1$ in Eq. (\ref{mittagII}), we have the special case, that is, exponential function, $\mathcal{E}_{\alpha,\beta}(z)=\mathcal{E}_{1,1}(z)=e^{z}$. Other particular cases for Eq. (\ref{mittagII}), see \cite{Gorenflo}.

Let $J=[a,b]$ be a finite or infinite interval of the line $\mathbb{R}_{+}$ and $0<\alpha \leq 1$. Also, let $\psi(u)$ be an increasing and positive monotone function on $J_1=(a,b]$ having a continuous derivative $\psi'(u)$ on $J_2=(a,b)$. The left-sided fractional integral of a function $\zeta$ with respect to the function $\psi$ on $J=[a,b]$ is defined by \cite{Sousa,Sousa1}
\begin{equation}
I^{\alpha;\psi}_{a+}\zeta(u)=\frac{1}{\Gamma(\alpha)}\int_{a}^{u} \psi'(s)(\psi(u)-\psi(s))^{\alpha-1}\zeta(s)ds.
\end{equation}

Choosing $\psi(u)=u$, we have the RLFI given by
\begin{equation*}
I^{\alpha}_{a+}\zeta(u)=\frac{1}{\Gamma(\alpha)}\int_{a}^{u} (u-s)^{\alpha-1}\zeta(s)ds.
\end{equation*}

On the other hand, let $n-1<\alpha \leq n$ with $n\in \mathbb{N}$, $J$ the interval and $\zeta,\psi\in C^{n}(J,\mathbb{R})$ be two functions such that $\psi$ is increasing and $\psi'(u)\neq 0$ for all $u\in J$. The left-sided $\psi$-HFD $^{\mathbf{H}}\mathbb{D}_{0+}^{\alpha,\beta;\psi}(\cdot)$ of a function $f$ of order $\alpha$ and type $0\leq \beta \leq 1$ is defined by \cite{Sousa,Sousa1}
\begin{equation}
^{H}\mathbb{D}_{a+}^{\alpha,\beta;\psi}\zeta(u)=I^{\beta(n-\alpha);\psi}_{a+} \left(\frac{1}{\psi'(u)} \frac{d}{du}\right)^{n}  I^{(1-\beta)(n-\alpha)}_{a+}\zeta(u).
\end{equation}

Choosing $\psi(u)=u$, we have the HFD, given by
\begin{equation}\label{eq4}
^{H}\mathbb{D}_{a+}^{\alpha,\beta}\zeta(u)=I^{\beta(n-\alpha);\psi}_{a+} \left(\frac{d}{du}\right)^{n}  I^{(1-\beta)(n-\alpha)}_{a+}\zeta(u).
\end{equation}

Motivated by the works discussed above and the open questions, in addition to the restriction of results in the area involving fractional potentials, sectorial operators and interpolation-extrapolation scales of fractional integro-differential equations, in this paper we discuss new results involving $\varepsilon$-regular mild solutions via Gronwall inequality for the problem Eq. (\ref{Eq1}). In order to clarify and facilitate the development of the work, we will now highlight the main contributions, namely:
\begin{enumerate}
    \item We investigate some estimates involving parts of the $\varepsilon$-regular mild solutions of Eq. (\ref{Eq1}) given by means of the two-parameter Mittag-Leffler functions.
    
    \item We discuss the existence, regularity and continuous dependence of $\varepsilon$-regular mild solutions for fractional integro-differential equation (see Eq. (\ref{Eq1})).
    
\end{enumerate}

A natural consequence of problems involving fractional derivatives the particular case $\alpha=1$, is always recovered the classic case. Furthermore, as the HFD is an interpolation between the CFD and RFLD, we have the following fractional versions for Eq. (\ref{Eq1}), i.e.: taking the limit $\beta \rightarrow 0$ in Eq. (\ref{Eq1}), we have the problem in the the RLFD version, given by
\begin{equation*}
\left\{ 
\begin{array}{cll}
^{RL}D_{0+}^{\alpha}\zeta(u) & = & \mathcal{A} \zeta(u)+\displaystyle\int_{0}^{u}g(u-s,\zeta(s))ds+f(u,\zeta(u)),\quad u>0 \\ 
\inte_{0+}^{\alpha}\zeta(0) & = & \zeta_0\in \mathcal{D}(\mathcal{A}).
\end{array}
\right.
\end{equation*}

On the other hand, taking the limit $\beta\rightarrow 1$ in Eq. (\ref{Eq1}), we get the problem in the CFD version, given by
\begin{equation*}
\left\{ 
\begin{array}{cll}
^{C}{\bf D}_{0+}^{\alpha}\zeta(u) & = & \mathcal{A} \zeta(u)+\displaystyle\int_{0}^{u}g(u-s,\zeta(s))ds+f(u,\zeta(u)),\quad u>0 \\ 
\zeta(0) & = & \zeta_0\in \mathcal{D}(\mathcal{A}).
\end{array}
\right.
\end{equation*}

We can notice that the initial conditions of each problem change, consequently, in the formulation of their respective solutions $\varepsilon$-regular mild solutions, changes also occur. Furthermore, all the results investigated here are valid for their respective particular cases.

The rest of the paper is divided as follows: In section 2, we present some essential concepts for the development of the paper, as well as some estimation results involving parts of the $\varepsilon$-regular mild solutions given by the two-parameter Mittag-Leffler function. In section 3, we will attack the main results of this paper, that is, the existence, regularity and continuous dependence of $\varepsilon$-regular mild solutions for fractional integro-differential equations in Banach space. Finally, we close the paper with comments and future work.


\section{Mathematical background: auxiliary results}

Consider $\mathbb{B}_{0}$ a Banach space, $\mathcal{A}: D(\mathcal{A})\subset \mathbb{B}_{0} \rightarrow \mathbb{B}_{0}$ is a linear operator such that $-\mathcal{A}$ is a sectorial operator in $\mathbb{B}_{0}$.
	
Let $\mathbb{B}_{1}=(\mathcal{D}(\mathcal{A}),\left\| \mathcal{A}(\cdot)\right\| )$, and denote $\left\| \cdot\right\|_1 =\left\|\mathcal{A}(\cdot)\right\|$. Let $\mathcal{A}_1$ be the realization of $\mathcal{A}$ on $\mathbb{B}_1$ and define $\mathbb{B}_2=(\mathcal{D}(\mathcal{A}_{1}),\left\| \mathcal{A}_1(\cdot)\right\|_1 )$. Inductively, we define \cite{Amann,Arrieta1}
\begin{align*}
\mathbb{B}_{k+1}=(\mathbb{B}_{k},\left\| \cdot\right\|_{k+1})=\left( \mathcal{D}(\mathcal{A}_{k}),\left\| \mathcal{A}_k(\cdot)\right\| _{k}\right), 
\end{align*}
and $\mathcal{A}_{k+1}= \mathbb{B}_{k+1}$-realization of $\mathcal{A}_k$, for $k\in \nat$.

On the other hand, without loss of generality we assume $0\in \rho(\mathcal{A})$. Thus, consider the space $(\mathbb{B}_{0}, \left\| \mathcal{A}^{-1}\right\| )$ and define $\mathbb{B}_{-1}$ to be the completion of $\mathbb{B}_{0}$ with the norm $\left\| \mathcal{A}^{-1}\right\| $ \cite{Amann,Arrieta1}. It follows that
\begin{align*}
\mathbb{B}_{0}\hookrightarrow \mathbb{B}_{-1}.
\end{align*}

We fix a functor $\mathcal{F}=(\cdot,\cdot)_{\theta}$, $0<\theta<1$. Then, we define
\begin{align*}
\mathbb{B}_{k+\theta}=(\mathbb{B}_{k},\mathbb{B}_{k+1})_{\theta},
\end{align*}
and $\mathcal{A}_{k+\theta}= \mathbb{B}_{k+\theta}$ - realization of $\mathcal{A}_k$, for $k\in \nat\cup\{-1\}$ \cite{Amann,Arrieta1}.

The interpolation-extrapolation scale $[-1,\infty)$ associated with $\mathcal{A} \in \mathcal{F}$, is such that $\{(\mathbb{B}_{\alpha},\mathcal{A}_{\alpha}):-1<\alpha<\infty\}$. An important example of interpolation scale is the fractional power scale of a sectorial operator $\mathcal{A}$.

\begin{theorem}{\rm \cite{Sousa2}}\label{nov} Let $\zeta,v$ be two integrable functions and $g$ continuous, with domain $[a,b]$. Let $\psi \in C^{n}[a,b]$ an increasing function with $\psi'(u)\neq 0$, $\forall u\in [a,b]$. Assume that
\begin{itemize}
    \item $\zeta$ and $v$ are nonnegative;
    
    \item $g$ in nonnegative and nondecreasing.
    
\end{itemize}

If
\begin{equation*}
    \zeta(u)\leq v(u)+g(u)\displaystyle\int_{a}^{u} \psi'(u) (\psi(u)-\psi(\tau))^{\alpha-1}\zeta(\tau) d\tau
\end{equation*}
then,
\begin{equation*}
    \zeta(u)\leq v(u)+\int_{a}^{u} \sum_{k=1}^{\infty} \frac{[g(u) \Gamma(\alpha)]^{k}}{\Gamma(\alpha)} \psi'(\tau) (\psi(u)-\psi(\tau))^{\alpha k-1} v(\tau)d\tau,
\end{equation*}
for all $u\in [a,b]$.
\end{theorem}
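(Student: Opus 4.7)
The plan is to iterate the assumed inequality. Define the operator
\begin{equation*}
B\zeta(u) := g(u)\int_a^u \psi'(\tau)(\psi(u)-\psi(\tau))^{\alpha-1}\zeta(\tau)\,d\tau,
\end{equation*}
so the hypothesis reads $\zeta \leq v + B\zeta$. Since $g\geq 0$, $\psi'>0$, and $\zeta,v\geq 0$, the operator $B$ preserves nonnegativity and is monotone. Substituting the hypothesis into itself $n$ times yields
\begin{equation*}
\zeta(u) \leq v(u) + \sum_{k=1}^{n-1} B^k v(u) + B^n\zeta(u).
\end{equation*}
The goal is then (i) to identify each iterate $B^kv$ with the corresponding summand in the stated conclusion, and (ii) to send the remainder $B^n\zeta(u)$ to zero as $n\to\infty$.

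For step (i), I would prove by induction on $k$ that
\begin{equation*}
B^k v(u) \leq \frac{[g(u)\Gamma(\alpha)]^k}{\Gamma(k\alpha)}\int_a^u \psi'(\tau)(\psi(u)-\psi(\tau))^{k\alpha-1} v(\tau)\,d\tau.
\end{equation*}
The inductive step uses monotonicity of $g$ in a decisive way: inside $B(B^k v)(u)$, the factor $g(s)^k$ coming from the inductive hypothesis is bounded above by $g(u)^k$ since $s\leq u$, so it can be pulled out of the outer integral. After applying Fubini, the remaining inner integral reduces, via the substitution $w = (\psi(s)-\psi(\tau))/(\psi(u)-\psi(\tau))$, to the Beta-type identity
\begin{equation*}
\int_\tau^u \psi'(s)(\psi(u)-\psi(s))^{\alpha-1}(\psi(s)-\psi(\tau))^{k\alpha-1}\,ds = (\psi(u)-\psi(\tau))^{(k+1)\alpha-1}\frac{\Gamma(\alpha)\Gamma(k\alpha)}{\Gamma((k+1)\alpha)},
\end{equation*}
which closes the induction.

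For step (ii), the same kernel estimate, applied with $\zeta$ in place of $v$, gives
\begin{equation*}
B^n\zeta(u) \leq \frac{[g(u)\Gamma(\alpha)]^n}{\Gamma(n\alpha)}\int_a^u \psi'(\tau)(\psi(u)-\psi(\tau))^{n\alpha-1}\zeta(\tau)\,d\tau.
\end{equation*}
On the compact interval $[a,b]$ the terms $g(u)$, $(\psi(b)-\psi(a))^{n\alpha-1}$ and $\int_a^b \psi'(\tau)\zeta(\tau)\,d\tau$ are uniformly controlled, so the decay of $[g(u)\Gamma(\alpha)]^n/\Gamma(n\alpha)$ from Stirling's formula forces $B^n\zeta(u)\to 0$. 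Passing $n\to\infty$ in the iterated inequality and recognizing the surviving sum as the series in the conclusion finishes the argument.

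I expect the main obstacle to be the convergence $B^n\zeta(u)\to 0$ together with the interchange of limit and infinite summation when $\zeta$ is only assumed integrable; this is where compactness of $[a,b]$ and the super-exponential suppression by $\Gamma(n\alpha)$ do the essential work. The Beta-integral computation is standard once the substitution is made, but the bookkeeping of the $g(u)^k$ factors must be handled carefully at each inductive step — it is precisely here that the monotonicity hypothesis on $g$ is used.
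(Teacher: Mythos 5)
The paper states this Gronwall-type inequality without proof, quoting it from \cite{Sousa2}; your iteration argument---defining the monotone operator $B$, establishing the kernel bound for $B^k v$ by induction via the Beta-function identity after the substitution in $\psi$, and annihilating the remainder $B^n\zeta$ through the super-exponential growth of $\Gamma(n\alpha)$ on the compact interval---is exactly the proof given in that reference (the $\psi$-Hilfer adaptation of the classical Ye--Gao--Ding argument), and it is correct as proposed. The only discrepancy is in your favor: your induction yields the denominator $\Gamma(k\alpha)$, which is the correct form and is what makes the series converge, whereas the statement as printed carries the misprint $\Gamma(\alpha)$ (and similarly $\psi'(u)$ inside the hypothesis integral should be $\psi'(\tau)$, as you implicitly corrected).
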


\begin{corollary}{\rm \cite{Sousa2}} Under the hypothesis of {\rm {\bf Theorem} \ref{nov}}, let $v$ be a nondecreasing function on $[a,b]$. Then, we have
\begin{equation*}
    \zeta(u)\leq v(u) \mathcal{E}_{\alpha}(g(u) \Gamma(\alpha) [\psi(u)-\psi(\tau)]^{\alpha}),\,\forall u\in [a,b]
\end{equation*}
when $\mathcal{E}_{\alpha}(\cdot)$ is the one-parameter Mittag-Leffler function defined by {\rm Eq. (\ref{mittagI}).}
\end{corollary}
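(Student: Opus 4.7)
The plan is to start from the conclusion of Theorem \ref{nov} and collapse the infinite series of iterated integrals into a single Mittag-Leffler factor by exploiting monotonicity of $v$. Writing the estimate from Theorem \ref{nov} and using $v(\tau)\le v(u)$ for $\tau\in[a,u]$ (since $v$ is nondecreasing), I would pull $v(u)$ outside the integral to obtain
\begin{equation*}
\zeta(u)\le v(u)+v(u)\sum_{k=1}^{\infty}\frac{[g(u)\Gamma(\alpha)]^{k}}{\Gamma(\alpha k)}\int_{a}^{u}\psi'(\tau)(\psi(u)-\psi(\tau))^{\alpha k-1}\,d\tau,
\end{equation*}
where the interchange of summation and integration is justified by Tonelli, since every integrand is nonnegative.

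Next I would evaluate the inner integral by the substitution $w=\psi(u)-\psi(\tau)$, $dw=-\psi'(\tau)\,d\tau$, which turns it into
\begin{equation*}
\int_{0}^{\psi(u)-\psi(a)} w^{\alpha k-1}\,dw=\frac{(\psi(u)-\psi(a))^{\alpha k}}{\alpha k}.
\end{equation*}
Using the functional identity $\alpha k\,\Gamma(\alpha k)=\Gamma(\alpha k+1)$, the $k$-th term becomes $[g(u)\Gamma(\alpha)(\psi(u)-\psi(a))^{\alpha}]^{k}/\Gamma(\alpha k+1)$. Adding the $k=0$ term (which contributes $1$) lets me absorb the standalone $v(u)$ into the sum, so that
\begin{equation*}
\zeta(u)\le v(u)\sum_{k=0}^{\infty}\frac{\bigl[g(u)\,\Gamma(\alpha)\,(\psi(u)-\psi(a))^{\alpha}\bigr]^{k}}{\Gamma(\alpha k+1)}=v(u)\,\mathcal{E}_{\alpha}\!\bigl(g(u)\,\Gamma(\alpha)\,(\psi(u)-\psi(a))^{\alpha}\bigr),
\end{equation*}
by the definition of the one-parameter Mittag-Leffler function in Eq. (\ref{mittagI}). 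This is the stated bound (with the understanding that the $\tau$ appearing in the statement should be read as the lower endpoint $a$, which is the value forced by the substitution above).

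I do not expect a real obstacle: monotonicity of $v$ removes all the coupling across $\tau$, and the remaining work is a substitution plus recognition of the Mittag-Leffler series. The only point that requires a line of justification is the Fubini/Tonelli swap between the sum and the integral, which is immediate from nonnegativity of $\psi'$, $v$, $g$, and the factor $(\psi(u)-\psi(\tau))^{\alpha k-1}$; everything else is a bookkeeping step matching the Gamma function shifts so that $\Gamma(\alpha k)$ becomes $\Gamma(\alpha k+1)$.
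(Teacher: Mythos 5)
Your proof is correct and is essentially the standard argument for this result: the paper itself states the corollary without proof (it is imported from \cite{Sousa2}), and the argument given there is exactly yours --- bound $v(\tau)$ by $v(u)$ using monotonicity, interchange sum and integral by nonnegativity, evaluate the integral via $w=\psi(u)-\psi(\tau)$ to get $(\psi(u)-\psi(a))^{\alpha k}/(\alpha k)$, and recognize the Mittag-Leffler series after the shift $\alpha k\,\Gamma(\alpha k)=\Gamma(\alpha k+1)$. You also correctly repaired the typos in the quoted statements (the denominator in the series of Theorem \ref{nov} should be $\Gamma(\alpha k)$, the argument of $\mathcal{E}_{\alpha}$ should involve $\psi(u)-\psi(a)$ rather than the dangling $\psi(\tau)$, and Eq. (\ref{mittagI}) should read $\Gamma(\alpha k+1)$), so no gaps remain.
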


\begin{theorem} {\rm \cite{Carvalho,Andrade10}} Let $\alpha\in(0,1)$ and suppose that $\mathcal{A}:\mathcal{D}(\mathcal{A})\subset \mathbb{B} \rightarrow \mathbb{B}$ is a positive sectorial operator. Then, the operators
	\begin{align*}
	\mitt_\alpha(-u^{\alpha}\mathcal{A}):=\frac{1}{2\pi i}\int_{H_a}e^{\lambda u} \lambda^{\alpha-1}(\lambda^\alpha+\mathcal{A})^{-1}d\lambda,\quad u\geq 0
	\end{align*}
	and
	\begin{align*}
	\mitt_{\alpha,\alpha}(-u^{\alpha}\mathcal{A}):=\frac{u^{1-\alpha}}{2\pi i}\int_{H_a}e^{\lambda u} (\lambda^\alpha+\mathcal{A})^{-1}d\lambda,\quad u\geq 0
	\end{align*}
	are well defined and $\mitt_{\alpha}(-u^{\alpha}\mathcal{A})$ is strongly continuous, i.e., for each $x\in \mathbb{B}$ 
	\begin{align*}
	\lim_{u\rightarrow 0^{+}}\left\| \mitt_\alpha(-u^{\alpha}\mathcal{A})x-x\right\| =0.
	\end{align*}

Furthermore, there exists a constant ${\bf \Theta}>0$ {\rm (uniform to $\alpha$)} such that
	\begin{align*}
	\displaystyle\sup_{u\geq 0}\left\| \mitt_{\alpha}(-u^{\alpha}\mathcal{A})\right\| _{\mathcal{L}(\mathbb{B})}\leq {\bf \Theta},
	\end{align*}
	and
	\begin{align*}
	\displaystyle\sup_{u\geq 0}\left\| \mitt_{\alpha,\alpha}(-u^{\alpha}\mathcal{A})\right\| _{\mathcal{L}(\mathbb{B})}\leq {\bf \Theta}.
	\end{align*}
\end{theorem}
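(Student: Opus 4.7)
The plan is to read both contour integrals as Dunford--Taylor formulas built from the resolvent of the positive sectorial operator $\mathcal{A}$, and to derive every claim from the standard estimate $\|(\mu+\mathcal{A})^{-1}\|_{\mathcal{L}(\mathbb{B})}\leq M/|\mu|$ valid for $\mu$ outside a sector around the negative real axis of half-opening $\omega\in[0,\pi/2)$. First I would fix once and for all a Hankel path $H_a=\{re^{\pm i\theta}:r\geq a\}\cup\{ae^{i\phi}:|\phi|\leq\theta\}$, with $a>0$ and $\theta\in(\pi/2,\pi-\omega)$, so that (i) $\operatorname{Re}\lambda<0$ on the rays, which forces $e^{\lambda u}$ to decay exponentially as $|\lambda|\to\infty$, and (ii) $\alpha\theta<\pi-\omega$ uniformly for $\alpha\in(0,1)$, which keeps $\lambda^{\alpha}$ in the resolvent region of $-\mathcal{A}$ and yields $\|(\lambda^{\alpha}+\mathcal{A})^{-1}\|\leq M|\lambda|^{-\alpha}$ throughout $H_a$. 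Well-definedness of both integrals then follows by direct majorization: on $H_a$ the integrand of $\mitt_{\alpha}$ is dominated by $M|e^{\lambda u}||\lambda|^{-1}$ and that of $\mitt_{\alpha,\alpha}$ by $u^{1-\alpha}M|e^{\lambda u}||\lambda|^{-\alpha}$, both integrable (the second is integrable at the arc because $\alpha<1$).

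For the uniform-in-$u$ bound I would perform the substitution $\mu=\lambda u$, equivalently deforming $H_a$ to $H_{au}$ and back, together with the identity $(\lambda^{\alpha}+\mathcal{A})^{-1}=u^{\alpha}(\mu^{\alpha}+u^{\alpha}\mathcal{A})^{-1}$, to rewrite
\begin{align*}
\mitt_{\alpha}(-u^{\alpha}\mathcal{A}) &= \frac{1}{2\pi i}\int_{H_a}e^{\mu}\mu^{\alpha-1}(\mu^{\alpha}+u^{\alpha}\mathcal{A})^{-1}d\mu,\\
\mitt_{\alpha,\alpha}(-u^{\alpha}\mathcal{A}) &= \frac{1}{2\pi i}\int_{H_a}e^{\mu}(\mu^{\alpha}+u^{\alpha}\mathcal{A})^{-1}d\mu.
\end{align*}
The key observation is that $t\mathcal{A}$ is again positive sectorial with the \emph{same} constant $M$ for every $t>0$, so $\|(\mu^{\alpha}+u^{\alpha}\mathcal{A})^{-1}\|\leq M|\mu|^{-\alpha}$ uniformly in $u>0$. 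Substituting into the above yields
\begin{equation*}
\|\mitt_{\alpha}(-u^{\alpha}\mathcal{A})\|\leq\frac{M}{2\pi}\int_{H_a}\frac{|e^{\mu}|}{|\mu|}|d\mu|,\qquad \|\mitt_{\alpha,\alpha}(-u^{\alpha}\mathcal{A})\|\leq\frac{M}{2\pi}\int_{H_a}\frac{|e^{\mu}|}{|\mu|^{\alpha}}|d\mu|.
\end{equation*}
Fixing $a=1$, the second path integral is bounded by $2\theta e+2\int_1^{\infty}e^{r\cos\theta}dr$, independently of $\alpha$; the first is even simpler. Taking $\Theta$ to be the maximum of the two resulting constants produces the uniform bound stated.

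For the strong continuity I would work first on the dense subspace $\mathcal{D}(\mathcal{A})$ and then extend by the uniform bound. The resolvent identity $(\lambda^{\alpha}+\mathcal{A})^{-1}=\lambda^{-\alpha}I-\lambda^{-\alpha}(\lambda^{\alpha}+\mathcal{A})^{-1}\mathcal{A}$ combined with the classical value $\frac{1}{2\pi i}\int_{H_a}e^{\lambda u}\lambda^{-1}d\lambda=1$ (for $u>0$, obtained by collapsing $H_a$ onto a residue at the origin) gives, for $x\in\mathcal{D}(\mathcal{A})$,
\begin{equation*}
\mitt_{\alpha}(-u^{\alpha}\mathcal{A})x-x=-\frac{1}{2\pi i}\int_{H_a}e^{\lambda u}\lambda^{-1}(\lambda^{\alpha}+\mathcal{A})^{-1}\mathcal{A}x\, d\lambda.
\end{equation*}
The integrand is pointwise dominated by $M|\lambda|^{-1-\alpha}\|\mathcal{A}x\|$ (using $|e^{\lambda u}|\leq 1$ on the rays), which is integrable on $H_a$ uniformly for $u\in[0,1]$; dominated convergence as $u\to 0^{+}$ together with a contour deformation to infinity (justified by the $|\lambda|^{-1-\alpha}$ decay) makes the right-hand side vanish. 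A standard $3\varepsilon$ argument using the uniform bound $\|\mitt_{\alpha}(-u^{\alpha}\mathcal{A})\|\leq\Theta$ and the density of $\mathcal{D}(\mathcal{A})$ in $\mathbb{B}$ then extends the convergence to every $x\in\mathbb{B}$.

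The step I expect to be the main obstacle is the uniformity of $\Theta$ in $\alpha$: the path data $(a,\theta)$ must be chosen once and for all so that $\alpha\theta<\pi-\omega$ holds over the entire range $\alpha\in(0,1)$, which forces $\theta$ only slightly above $\pi/2$ and correspondingly weakens the exponential decay rate $|\cos\theta|$ on the rays. The $\mitt_{\alpha,\alpha}$ bound is the more delicate one because the factor $|\mu|^{-\alpha}$ near the origin would blow up as $\alpha\uparrow 1$ if $a$ were allowed to shrink; the change of variable $\mu=\lambda u$ is precisely what decouples this near-origin singularity from $u$ and permits a single bound valid for all $u>0$ and all $\alpha\in(0,1)$.
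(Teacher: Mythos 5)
Your argument is correct, and it is essentially the standard proof of this result: the paper itself states this theorem as background, citing \cite{Carvalho,Andrade10} without reproducing a proof, and your route (Hankel-contour Dunford--Taylor integrals, the scaling $\mu=\lambda u$ combined with the $u$-uniform resolvent bound for $u^{\alpha}\mathcal{A}$, and strong continuity on $\mathcal{D}(\mathcal{A})$ via the identity $(\lambda^{\alpha}+\mathcal{A})^{-1}=\lambda^{-\alpha}I-\lambda^{-\alpha}(\lambda^{\alpha}+\mathcal{A})^{-1}\mathcal{A}$ plus density and the uniform bound) is exactly the approach taken in those references. Your handling of the two delicate points — fixing the contour angle $\theta\in(\pi/2,\pi-\omega)$ once so the bound is uniform in $\alpha\in(0,1)$, and pushing the contour to infinity to kill the limiting integral in the strong-continuity step — is sound.
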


\begin{lemma}{\rm \cite{Carvalho,Andrade10}} Consider $w\in(0,\infty)$, $\zeta:[0,w)\rightarrow \mathbb{B}$ a bounded continuous function, and $f:[0,\infty)\times \mathbb{B}\rightarrow \mathbb{B}$ a continuous function that maps bounded sets onto bounded sets. If $\{u_n\}\subset [0,w)$ satisfies $\lim_{n\rightarrow \infty} u_n=w$, then
	\begin{align*}
	\lim_{n\rightarrow \infty} \int_{0}^{u_n}(u_n-r)^{\alpha-1}\left\|\left(  \mathcal{Q}^{\alpha}_{u_{n},r}(\mathcal{A})-\mathcal{Q}^{\alpha}_{w,s}(\mathcal{A})\right)f(r,\zeta(r))\right\| dr=0,
	\end{align*}
 where $\mathcal{Q}^{\alpha}_{u_{n},r}(\mathcal{A}):= \mitt_{\alpha,\alpha}(-(u_{n}-r)^\alpha \mathcal{A})$ and $\mathcal{Q}^{\alpha}_{w,s}(\mathcal{A}):= \mitt_{\alpha,\alpha}(-(w-s)^\alpha \mathcal{A})$.
\end{lemma}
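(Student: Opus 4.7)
The plan is a standard singular-integral splitting argument. Let $M:=\sup_{r\in[0,w)}\|f(r,\zeta(r))\|$, which is finite because $\zeta([0,w))$ is a bounded subset of $\mathbb{B}$ and $f$ sends bounded sets into bounded sets. Fix $\varepsilon>0$ and choose $\delta\in(0,w)$ to be pinned down later. Since $u_n\to w$, we may assume $u_n\in(w-\delta/2,w)$ for all $n$ large enough, so $u_n-\delta>0$. Setting $\mathcal{K}^\alpha_{u_n,r}:=\mathcal{Q}^{\alpha}_{u_n,r}(\mathcal{A})-\mathcal{Q}^{\alpha}_{w,r}(\mathcal{A})$ and
\[
J_n:=\int_0^{u_n}(u_n-r)^{\alpha-1}\|\mathcal{K}^\alpha_{u_n,r}f(r,\zeta(r))\|\,dr,
\]
I would split $J_n=J_n^{(1)}+J_n^{(2)}$ where $J_n^{(1)}$ is the integral over $[0,u_n-\delta]$ (the piece bounded away from the endpoint) and $J_n^{(2)}$ the integral over $[u_n-\delta,u_n]$ (the near-singular tail).

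For the tail $J_n^{(2)}$, the triangle inequality and the uniform estimate $\|\mitt_{\alpha,\alpha}(-t^\alpha\mathcal{A})\|_{\mathcal{L}(\mathbb{B})}\le{\bf\Theta}$ from the preceding theorem give
\[
J_n^{(2)}\le 2{\bf\Theta}M\int_{u_n-\delta}^{u_n}(u_n-r)^{\alpha-1}\,dr=\frac{2{\bf\Theta}M}{\alpha}\,\delta^{\alpha},
\]
and I pick $\delta$ small enough that this quantity is below $\varepsilon/2$; crucially $\delta$ depends only on $\varepsilon$, $M$, ${\bf\Theta}$, $\alpha$, not on $n$. For the regular piece $J_n^{(1)}$, on $r\in[0,u_n-\delta]$ both $u_n-r\ge\delta$ and $w-r\ge\delta/2$ (for $n$ large), so the contour representation of $\mitt_{\alpha,\alpha}$ provides operator-norm continuity of $t\mapsto\mitt_{\alpha,\alpha}(-t^\alpha\mathcal{A})$ on $[\delta/2,w]$, and since $u_n-r\to w-r$ uniformly on $[0,u_n-\delta]$, one has
\[
\eta_n:=\sup_{r\in[0,u_n-\delta]}\|\mathcal{K}^\alpha_{u_n,r}\|_{\mathcal{L}(\mathbb{B})}\longrightarrow 0.
\]
Combined with $\int_0^{u_n-\delta}(u_n-r)^{\alpha-1}dr\le w^\alpha/\alpha$ and $\|f(r,\zeta(r))\|\le M$, this gives $J_n^{(1)}\le\eta_n M w^\alpha/\alpha\to 0$. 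Hence $J_n<\varepsilon$ for $n$ large, proving the claim.

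The main obstacle is justifying the uniform collapse $\eta_n\to 0$: one needs operator-norm (not merely strong) continuity of $t\mapsto\mitt_{\alpha,\alpha}(-t^\alpha\mathcal{A})$ on $[\delta/2,w]$. This follows by differentiating under the Hankel contour and bounding the derivative through the exponential decay of $e^{\lambda t}$ on the tails of $H_a$ (uniform in $t$ bounded away from $0$). Should only strong continuity be available, I would replace the uniform estimate by dominated convergence: the integrand $(u_n-r)^{\alpha-1}\|\mathcal{K}^\alpha_{u_n,r}f(r,\zeta(r))\|$ tends to $0$ pointwise, and on the fixed window $[0,w-3\delta/2]$ it is dominated by $2{\bf\Theta}M\cdot 3^{1-\alpha}(w-r)^{\alpha-1}$ (since $u_n-r\ge(w-r)/3$ there), while the narrow strip $[w-3\delta/2,u_n-\delta]$ contributes a term controlled exactly as in the estimate of $J_n^{(2)}$.
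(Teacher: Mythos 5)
The paper does not prove this lemma at all: it is quoted verbatim from the cited references (Carvalho-Neto's thesis and Andrade et al.), so there is no in-paper argument to compare yours against. Judged on its own, your proof is correct and is the standard argument one finds in those sources: you correctly read the statement's $\mathcal{Q}^{\alpha}_{w,s}$ as $\mathcal{Q}^{\alpha}_{w,r}$ (an evident typo), the bound $M$ is legitimate because $[0,w)\times\zeta([0,w))$ is bounded and $f$ maps bounded sets to bounded sets, the tail estimate $2{\bf \Theta}M\delta^{\alpha}/\alpha$ uses exactly the uniform bound $\sup_{t\ge 0}\|\mitt_{\alpha,\alpha}(-t^{\alpha}\mathcal{A})\|_{\mathcal{L}(\mathbb{B})}\le{\bf \Theta}$ available in the preceding theorem, and on the regular piece the uniform collapse $\eta_n\to 0$ is justified, since for a positive sectorial operator the Hankel-contour representation makes $t\mapsto\mitt_{\alpha,\alpha}(-t^{\alpha}\mathcal{A})$ norm-continuous (indeed analytic) on compact subsets of $(0,\infty)$, and $u_n-r$ converges to $w-r$ uniformly in $r$. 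Your fallback via dominated convergence, with the domination $(u_n-r)^{\alpha-1}\le 3^{1-\alpha}(w-r)^{\alpha-1}$ on $[0,w-3\delta/2]$, is also sound and is closer in spirit to how the cited references handle the limit; the only cosmetic point is that choosing $\delta<2w/3$ should be said explicitly so that $u_n-\delta>0$ for large $n$, which your choice of $\delta$ small already permits.
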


\begin{theorem}{\rm \cite{Carvalho,Andrade10}}\label{th2.47} Consider $\alpha\in (0,1)$, $0\leq \tilde{\beta} \leq 1$ and suppose that $\mathcal{A}:\mathcal{D}(\mathcal{A})\subset \mathbb{B}\rightarrow \mathbb{B}$ is a positive sectorial operator. Then, there exists a constant ${\bf \Theta}>0$ such that
\begin{align*}
\left\| \mitt_{\alpha}(-u^{\alpha}\mathcal{A})x\right\|_{\mathbb{B}^{\tilde{\beta}}}\leq {\bf \Theta} u^{-\alpha\tilde{\beta}}\left\| x\right\|_{\mathbb{B}_{0}} 
\end{align*}
	and
\begin{align*}
\left\| \mitt_{\alpha,\alpha}(-u^{\alpha}\mathcal{A})x\right\|_{\mathbb{B}_{0}^{\tilde{\beta}}}\leq {\bf \Theta} u^{-\alpha\tilde{\beta}}\left\| x\right\|_{\mathbb{B}_{0}},
\end{align*}
for all $u>0$, where $\mitt_{\alpha}(\cdot)$, $\mitt_{\alpha,\alpha}(\cdot)$ are one and two-parameters Mittag-Leffler functions, respectively.
\end{theorem}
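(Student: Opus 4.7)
The plan is to start from the contour representations of $\mitt_\alpha(-u^{\alpha}\mathcal{A})$ and $\mitt_{\alpha,\alpha}(-u^{\alpha}\mathcal{A})$ given in the preceding theorem and to bring the fractional power $\mathcal{A}^{\tilde{\beta}}$ inside the Dunford integral. Because the norm on $\mathbb{B}^{\tilde{\beta}}$ is equivalent to the graph norm $\|\mathcal{A}^{\tilde{\beta}}(\cdot)\|_{\mathbb{B}_0}$ and $\mathcal{A}^{\tilde{\beta}}$ is closed, it suffices to bound $\|\mathcal{A}^{\tilde{\beta}}\mitt_\alpha(-u^\alpha\mathcal{A})x\|_{\mathbb{B}_0}$ and $\|\mathcal{A}^{\tilde{\beta}}\mitt_{\alpha,\alpha}(-u^\alpha\mathcal{A})x\|_{\mathbb{B}_0}$, the interchange being legitimate once the resulting Bochner integrals are absolutely convergent.

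The main analytic input is the moment/resolvent inequality for positive sectorial operators: there exists $C>0$ such that, uniformly in $\tilde{\beta}\in[0,1]$ and in every $\mu$ belonging to the open sector containing $H_a$,
\begin{equation*}
\|\mathcal{A}^{\tilde{\beta}}(\mu\,\mathrm{Id}+\mathcal{A})^{-1}\|_{\mathcal{L}(\mathbb{B}_0)}\leq C\,|\mu|^{\tilde{\beta}-1}.
\end{equation*}
Specializing $\mu=\lambda^\alpha$ on the Hankel contour yields $\|\mathcal{A}^{\tilde{\beta}}(\lambda^\alpha+\mathcal{A})^{-1}\|\leq C|\lambda|^{\alpha(\tilde{\beta}-1)}$, so the two representations produce
\begin{equation*}
\|\mathcal{A}^{\tilde{\beta}}\mitt_\alpha(-u^\alpha\mathcal{A})x\|_{\mathbb{B}_0} \leq \frac{C\|x\|_{\mathbb{B}_0}}{2\pi}\int_{H_a}|e^{\lambda u}|\,|\lambda|^{\alpha\tilde{\beta}-1}\,|d\lambda|,
\end{equation*}
and an analogous bound containing an extra factor $u^{1-\alpha}$ and the exponent $\alpha\tilde{\beta}-\alpha$ for $\mitt_{\alpha,\alpha}$.

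The decisive step is the rescaling $z=\lambda u$, which carries $H_a$ into $H_{au}$; Cauchy's theorem then lets me deform back to a fixed Hankel contour, say $H_1$, exploiting the analyticity of the integrand and the exponential decay of $e^{z}$ along the rays of the contour. After this scaling the first integral becomes $u^{-\alpha\tilde{\beta}}\int_{H_1}|e^z|\,|z|^{\alpha\tilde{\beta}-1}\,|dz|$, which is finite because the contour stays bounded away from the origin on its arc and because $e^z$ decays on its rays. The analogous calculation for the second representation gives $u^{1-\alpha}\cdot u^{\alpha-\alpha\tilde{\beta}-1}=u^{-\alpha\tilde{\beta}}$, so one ends up with the stated estimates, with a uniform constant $\Theta$.

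I expect the subtle point to be the moment inequality $\|\mathcal{A}^{\tilde{\beta}}(\mu+\mathcal{A})^{-1}\|\leq C|\mu|^{\tilde{\beta}-1}$ uniform in $\tilde{\beta}\in[0,1]$ and in $\mu$ on the sector: the endpoints $\tilde{\beta}=0$ (given by the preceding theorem) and $\tilde{\beta}=1$ (obtained from the identity $\mathcal{A}(\mu+\mathcal{A})^{-1}=\mathrm{Id}-\mu(\mu+\mathcal{A})^{-1}$) are straightforward, while the intermediate cases follow either by complex interpolation or from the Balakrishnan integral representation of $\mathcal{A}^{\tilde{\beta}}$. Once this bound and the deformation of the Hankel contour are justified, the remainder of the argument is a routine rescaling of convergent scalar integrals.
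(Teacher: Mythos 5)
The paper does not actually prove this statement: Theorem \ref{th2.47} is quoted verbatim from the cited references (Carvalho-Neto's thesis and Andrade et al.), so there is no internal proof to compare against. Your reconstruction is essentially the standard argument from those sources and is sound: pull the closed operator $\mathcal{A}^{\tilde{\beta}}$ through the Dunford--Bochner integral, use the sectorial bound $\left\| \mathcal{A}^{\tilde{\beta}}(\mu+\mathcal{A})^{-1}\right\| \leq C|\mu|^{\tilde{\beta}-1}$ (which indeed follows uniformly in $\tilde{\beta}\in[0,1]$ from the moment inequality applied to $(\mu+\mathcal{A})^{-1}y$, using $\left\|\mu(\mu+\mathcal{A})^{-1}\right\|\leq C$ and $\mathcal{A}(\mu+\mathcal{A})^{-1}=\mathrm{Id}-\mu(\mu+\mathcal{A})^{-1}$), and then rescale $z=\lambda u$; the exponents come out exactly as you computed, $u^{-\alpha\tilde{\beta}}$ in both cases. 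Two small points deserve care. First, perform the contour deformation (from $H_a$ to the $u$-scaled contour, or equivalently the substitution followed by deformation to a fixed Hankel contour) on the analytic vector-valued integrand \emph{before} taking norms; once you have written $\int_{H_a}|e^{\lambda u}|\,|\lambda|^{\alpha\tilde{\beta}-1}|d\lambda|$ the integrand is no longer analytic and Cauchy's theorem does not apply to it as written. Second, your identification of $\left\|\cdot\right\|_{\mathbb{B}^{\tilde{\beta}}}$ with the graph norm of $\mathcal{A}^{\tilde{\beta}}$ is legitimate on the fractional power scale, which is the setting of the cited theorem; if one insists on a general interpolation functor $(\cdot,\cdot)_{\theta}$ as used elsewhere in the paper, the cleaner route is to prove only the endpoint estimates $\tilde{\beta}=0$ (uniform boundedness) and $\tilde{\beta}=1$ (via $\mathcal{A}(\lambda^{\alpha}+\mathcal{A})^{-1}$ bounded) and then interpolate the operator $\mathbb{B}_{0}\to\mathbb{B}^{0}$, $\mathbb{B}_{0}\to\mathbb{B}^{1}$ to get the intermediate bound ${\bf \Theta}u^{-\alpha\tilde{\beta}}$ directly. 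With those adjustments your argument is complete.
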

	
Indeed, observe by {\bf Theorem} \ref{th2.47} and some simple computations, that
\begin{align*}
\left\| \mitt_{\alpha}(-u^{\alpha}\mathcal{A})x\right\|_{\mathbb{B}^{1}}\leq {\bf \Theta} u^{-\alpha(1-\tilde{\beta})}\left\| x\right\|_{\mathbb{B}^{\tilde{\beta}}},
\end{align*}
and
\begin{align*}
\left\|u^{\alpha-1} \mitt_{\alpha,\alpha}(-u^{\alpha}\mathcal{A})x\right\|_{\mathbb{B}^{1}}\leq {\bf \Theta} u^{\alpha\tilde{\beta}-1}\left\| x\right\|_{\mathbb{B}^{\tilde{\beta}}},
\end{align*}
for some ${\bf \Theta}>0$.

\begin{remark}{\rm \cite{Carvalho,Andrade10}}\label{remark1} An immediate consequence of {\rm {\bf Theorem} \ref{th2.47}} is that for $0\leq\theta \leq \tilde{\beta}\leq 1$, and $x\in \mathbb{B}^{\tilde{\beta}}$,
\begin{align*}
		u^{\alpha(1+\theta-\tilde{\beta})}\left\| \mitt_{\alpha}(-u^{\alpha}\mathcal{A})x\right\|_{\mathbb{B}^{1+\theta}}\leq {\bf \Theta}\left\|x\right\| _{\mathbb{B}^{\tilde{\beta}}},
\end{align*}
and
\begin{align*}
		u^{\alpha(\theta-\tilde{\beta})+1}\left\|u^{\alpha-1} \mitt_{\alpha,\alpha}(-u^{\alpha}\mathcal{A})x\right\|_{\mathbb{B}^{1+\theta}}\leq {\bf \Theta}\left\|x\right\| _{\mathbb{B}^{\tilde{\beta}}}.
\end{align*}
\end{remark}

\begin{definition}{\rm \cite{Arrieta1}}\label{defi1} For $\ep\geq 0$, we called a function $f$ as $\ep$-regular in relation to $(\mathbb{B}_1,\mathbb{B}_{0})$, if exists $\rho>1$, $\tilde{\gamma}(\ep)$ with $\rho \ep\leq \tilde{\gamma}(\ep)<1$ and a constant $C$, such that $f:\mathbb{B}_{1+\ep}\rightarrow \mathbb{B}_{\tilde{\gamma}(\ep)}$ and 
\begin{align*}
	\left\|f(x)-f(y)\right\| _{\mathbb{B}_{\tilde{\gamma}(\ep)}}\leq C\left\| x-y\right\| _{\mathbb{B}_{1+\ep}}\left(  \left\| x\right\| _{\mathbb{B}_{1+\ep}}^{\rho-1}+ \left\| y\right\| _{\mathbb{B}_{1+\ep}}^{\rho-1}+1\right) ,
	\end{align*}
	for all $x,y\in \mathbb{B}_{1+\ep}$.	 
\end{definition}

Let $\ep,\rho$ (positive constants) and $\tilde{\gamma}(\ep)$, and a function $c\in L_{loc}^{1}(0,\infty)$ with $c(u)\leq Cu^{v}$, for some $C>0$, $\rho\ep-1<v\leq 0$ and $u>0$. Let ${\mathcal V}$ a non-decreasing function, such that $0\leq {\mathcal V(u)}\leq \delta$ and $\lim_{u\rightarrow 0^{+}}{\mathcal V(u)}=0$, for some $\delta>0$.

Let $\mathcal{F}=\mathcal{F}(\ep,\rho,\tilde{\gamma}(\ep),q_1,q_1^{*},c,{\mathcal V},v) $ the class of the functions $f$ with $q_1\in[-v-\tilde{\gamma}(\ep)+\ep,0]$ and $q_1^{*}\in[-v-\tilde{\gamma}(\ep),0]$ such that $f(u,\cdot)$ is a function $\ep$-regular in relation to $(\mathbb{B}_1,\mathbb{B}_{0})$ satisfying 
\begin{align}\label{Eq11}
\left\|f(u,x)-f(u,y)\right\| _{\mathbb{B}_{\tilde{\gamma}(\ep)}}\leq c(u)\left\| x-y\right\| _{\mathbb{B}_{1+\ep}}\left(  \left\| x\right\| _{\mathbb{B}_{1+\ep}}^{\rho-1}+ \left\| y\right\| _{\mathbb{B}_{1+\ep}}^{\rho-1}+{\mathcal V(u)}u^{q_1}\right) ,
\end{align}
and
\begin{align}\label{Eq12}
\left\|f(u,x)\right\|_{\mathbb{B}_{\tilde{\gamma}(\ep)}}\leq c(u)\left(  \left\| x \right\| _{\mathbb{B}_{1+\ep}}^{\rho}+{\mathcal V(u)}u^{q_1^{*}}\right) ,
\end{align}
for all $x,y\in \mathbb{B}_{1+\ep}$ and $u>0$.

First, we introduce the notation  $\mathcal{F}_1={\mathcal{F}(\ep_1,\rho_1,\tilde{\gamma}(\ep_1),q_1,q_1^{*},c,v,{\mathcal V_{1}})}$ and $\mathcal{F}_2={\mathcal{F}(\ep_2,\rho_2,\eta(\ep_2),l,l^{*},c,v,{\mathcal V_{2}})}$, where ${\mathcal V_{1}}$ and ${\mathcal V_{2}}$ are non-decreasing functions.

Consider 
\begin{align*}
\mathcal{B}_{\ep_2}^{\theta,\eta}=\max\left\lbrace \mathcal{B}(\eta(\ep_2)-\theta,2+v+l^{*}),\mathcal{B}(\eta(\ep_2)-\theta,2+v+\rho_2\ep_2),\mathcal{B}(\eta(\ep_2)-\theta,2+v+l-\ep_2)\right\rbrace,
\end{align*}
and
\begin{align*}
\mathcal{B}_{\ep_1}^{\theta,\tilde{\gamma}}=\max\left\lbrace \mathcal{B}(\alpha(\tilde{\gamma}(\ep_1)-\theta),1-\rho_1\ep_1),\mathcal{B}(\alpha(\tilde{\gamma}(\ep_1)-\theta),1+q_1^{*}),\mathcal{B}(\alpha(\tilde{\gamma}(\ep_1)-\theta),1+q_1-\ep_1)\right\rbrace,
\end{align*}
where $\mathcal{B}$ is the beta function.

The function $\zeta:[0,\tau]\rightarrow \mathbb{B}_1$ such that $\zeta\in C((0,\tau],\mathbb{B}_{1+\ep})\cap C([0,\tau],\mathbb{B}_{1})$ and 
\begin{align}\label{solutionmild}
\zeta(u)=\mitt_{\alpha,\beta}(-u^{\alpha}\mathcal{A})\zeta_0+\int_{0}^{u}(u-s)^{\alpha-1}\mathcal{Q}^{\alpha}_{u,s}(\mathcal{A})\int_{0}^{s}g(s-r,\zeta(r))drds+\int_{0}^{u}(u-s)^{\alpha-1}\mathcal{Q}^{\alpha}_{u,s}(\mathcal{A})f(s,\zeta(s))ds,
\end{align}
with $u\geq 0,\tau>0$, is the local mild solutions for the Eq. (\ref{Eq1}) and $\mathcal{Q}^{\alpha}_{u,s}(\mathcal{A}):= \mitt_{\alpha,\alpha}(-(u-s)^\alpha \mathcal{A})$.

Before attacking the main results of this paper, we first investigate some Lemmas, which are of paramount importance throughout the paper.

\begin{lemma}\label{lemma2} Let $f\in \mathcal{F}_1$. If $\zeta\in C((0,\tau],\mathbb{B}_{1+\ep})$, then for all $0\leq \theta<\tilde{\gamma}(\ep_1)$, $\gamma=\alpha+\beta(1-\alpha)$
	\begin{align*}
	u^{\theta\gamma}\left\| \int_{0}^{u}(u-s)^{\alpha-1}\mathcal{Q}^{\alpha}_{u,s}(\mathcal{A})f(s,\zeta(s))ds\right\|_{\mathbb{B}_{1+\theta}}\leq {\bf \Theta \Phi} \mathcal{B}_{\ep_1}^{\theta,\tilde{\gamma}}(\lambda_{\ep_1}(u)^{\rho_1}u^{\theta\gamma+\alpha(\tilde{\gamma}(\ep_1)-\theta)-\rho_1\ep_1}+\mathcal{V}_{1}(u)u^{\theta\gamma+\alpha(\tilde{\gamma}(\ep_1)-\theta)+q_1^{*}}),
	\end{align*}
for all $0<u<\tau$. Here $\lambda_{\ep}=\displaystyle\sup_{s\in[0,\tau]}s^{\ep}\left\|\zeta(s)\right\| _{\mathbb{B}_{1+\ep}} $.
\end{lemma}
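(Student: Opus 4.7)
The plan is to push the $\mathbb{B}_{1+\theta}$-norm inside the integral, apply the smoothing bound from Remark \ref{remark1} to the factor $(u-s)^{\alpha-1}\mathcal{Q}^\alpha_{u,s}(\mathcal{A})$, use the $\ep$-regular growth bound (\ref{Eq12}) on $f$ together with the pointwise control $\|\zeta(s)\|_{\mathbb{B}_{1+\ep_1}}\le s^{-\ep_1}\lambda_{\ep_1}(u)$, and finally recognise the resulting time integrals as Beta functions, multiplying through by $u^{\theta\gamma}$ at the end.

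First I would use Minkowski's inequality for the Bochner integral to write
\begin{equation*}
\left\|\int_0^u(u-s)^{\alpha-1}\mathcal{Q}^\alpha_{u,s}(\mathcal{A})f(s,\zeta(s))\,ds\right\|_{\mathbb{B}_{1+\theta}}\le \int_0^u(u-s)^{\alpha-1}\|\mathcal{Q}^\alpha_{u,s}(\mathcal{A})f(s,\zeta(s))\|_{\mathbb{B}_{1+\theta}}\,ds,
\end{equation*}
and then the second inequality in Remark \ref{remark1}, taken with $\tilde{\beta}=\tilde{\gamma}(\ep_1)$ and $x=f(s,\zeta(s))$, gives
\begin{equation*}
(u-s)^{\alpha-1}\|\mathcal{Q}^\alpha_{u,s}(\mathcal{A})f(s,\zeta(s))\|_{\mathbb{B}_{1+\theta}}\le \mathbf{\Theta}\,(u-s)^{\alpha(\tilde{\gamma}(\ep_1)-\theta)-1}\|f(s,\zeta(s))\|_{\mathbb{B}_{\tilde{\gamma}(\ep_1)}}.
\end{equation*}
Inequality (\ref{Eq12}), combined with $c(s)\le Cs^v$ and the definition of $\lambda_{\ep_1}$, then yields
\begin{equation*}
\|f(s,\zeta(s))\|_{\mathbb{B}_{\tilde{\gamma}(\ep_1)}}\le Cs^v\bigl(\lambda_{\ep_1}(u)^{\rho_1}s^{-\rho_1\ep_1}+\mathcal{V}_1(s)s^{q_1^{*}}\bigr).
\end{equation*}

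Substituting these two pointwise bounds, using the monotonicity of $\mathcal{V}_1$ so that $\mathcal{V}_1(s)\le \mathcal{V}_1(u)$, and splitting the result into two summands produces Beta-type integrals of the form $\int_0^u(u-s)^{\alpha(\tilde{\gamma}(\ep_1)-\theta)-1}s^{w}\,ds$ with $w=v-\rho_1\ep_1$ and $w=v+q_1^{*}$, each equal to $u^{\alpha(\tilde{\gamma}(\ep_1)-\theta)+w}\,\mathcal{B}(\alpha(\tilde{\gamma}(\ep_1)-\theta),1+w)$. Bounding both Beta values by the maximum $\mathcal{B}_{\ep_1}^{\theta,\tilde{\gamma}}$, absorbing the residual $u^v$ factor and universal constants into a single factor $\mathbf{\Phi}$, and finally multiplying by $u^{\theta\gamma}$ reproduces the claimed estimate.

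The main obstacle is the careful exponent bookkeeping and the verification that the Beta integrals actually converge: one needs $\alpha(\tilde{\gamma}(\ep_1)-\theta)>0$, which is immediate from $\theta<\tilde{\gamma}(\ep_1)$, together with $1+v-\rho_1\ep_1>0$ and $1+v+q_1^{*}>0$, which follow from the structural assumptions $\rho_1\ep_1-1<v\le 0$, $q_1^{*}\in[-v-\tilde{\gamma}(\ep_1),0]$, and $\tilde{\gamma}(\ep_1)<1$ built into the class $\mathcal{F}_1$. A secondary concern is keeping the harmless $u^v$ contribution out of the powers of $u$ displayed in the statement, which is handled by collecting it into the constant $\mathbf{\Phi}$; once these integrability bounds and the constant renaming are in place, the remainder of the argument is routine.
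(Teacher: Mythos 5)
Your overall route is the same as the paper's: bound the Bochner integral termwise, apply the smoothing estimate of Remark \ref{remark1} with $\tilde{\beta}=\tilde{\gamma}(\ep_1)$ to $(u-s)^{\alpha-1}\mathcal{Q}^{\alpha}_{u,s}(\mathcal{A})$, insert the growth bound (\ref{Eq12}), evaluate the resulting integrals as Beta functions after the substitution $s=u\xi$, and multiply by $u^{\theta\gamma}$ at the end. Your convergence checks ($\alpha(\tilde{\gamma}(\ep_1)-\theta)>0$, $1+v-\rho_1\ep_1>0$, $1+v+q_1^{*}\geq 1-\tilde{\gamma}(\ep_1)>0$) are also correct.

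The one point where you deviate, and where your argument as written does not close, is the treatment of $c$. The paper bounds $c(s)$ by the constant $\mathbf{\Phi}$ at the outset, which is precisely why no $v$ appears in the exponents of the stated estimate and why the Beta values that occur are $\mathcal{B}(\alpha(\tilde{\gamma}(\ep_1)-\theta),1-\rho_1\ep_1)$ and $\mathcal{B}(\alpha(\tilde{\gamma}(\ep_1)-\theta),1+q_1^{*})$, i.e.\ entries of $\mathcal{B}_{\ep_1}^{\theta,\tilde{\gamma}}$. You instead keep $c(s)\leq Cs^{v}$, which yields the exponents $\theta\gamma+\alpha(\tilde{\gamma}(\ep_1)-\theta)+v-\rho_1\ep_1$ and $\theta\gamma+\alpha(\tilde{\gamma}(\ep_1)-\theta)+v+q_1^{*}$, and you then propose to absorb the leftover factor $u^{v}$ into $\mathbf{\Phi}$. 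Since $v\leq 0$, the factor $u^{v}$ is unbounded as $u\rightarrow 0^{+}$ whenever $v<0$, so this absorption is not legitimate on $(0,\tau)$; moreover your Beta values $\mathcal{B}(\alpha(\tilde{\gamma}(\ep_1)-\theta),1+v-\rho_1\ep_1)$ and $\mathcal{B}(\alpha(\tilde{\gamma}(\ep_1)-\theta),1+v+q_1^{*})$ are in general larger than the three entries in the definition of $\mathcal{B}_{\ep_1}^{\theta,\tilde{\gamma}}$ (the Beta function increases as its second argument decreases), so they are not dominated by that maximum either. To land on the inequality exactly as stated you should bound $c$ by a constant from the start, as the paper does here; your sharper version carrying the extra $u^{v}$ is the analogue of what the paper retains for the $g$-term in Lemma \ref{lemma4}, but it is not the form asserted in Lemma \ref{lemma2}.
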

\begin{proof} Indeed, by means of the Remark \ref{remark1} and Eq. (\ref{Eq12}), yields
\begin{eqnarray*}
	u^{\theta\gamma}\left\| \int_{0}^{u}(u-s)^{\alpha-1}\mathcal{Q}^{\alpha}_{u,s}(\mathcal{A})f(s,\zeta(s))ds\right\|_{\mathbb{B}_{1+\theta}}&\leq& u^{\theta\gamma} \int_{0}^{u}{\bf \Theta}(u-s)^{\alpha(\tilde{\gamma}(\ep_1)-\theta)-1}\left\| f(s,\zeta(s))\right\|_{\mathbb{B}_{\tilde{\gamma}({\ep_1})}}ds 
	\notag\\ &\leq& u^{\theta\gamma}\Theta \int_{0}^{u}(u-s)^{\alpha(\tilde{\gamma}(\ep_1)-\theta)-1}c(s)\left( \left\|\zeta(s)\right\|_{\mathbb{B}_{1+\ep_1}^{\rho_1}}+{\mathcal V_{1}}(s)s^{q_1^{*}}\right) ds \notag\\ &\leq& {\bf \Theta \Phi} u^{\theta\gamma} \int_{0}^{u}(u-s)^{\alpha(\tilde{\gamma}(\ep_1)-\theta)-1}s^{-\rho_1\ep_1}\left(s^{\ep_1} \left\|\zeta(s)\right\|_{\mathbb{B}_{1+\ep_1}}\right) ^{\rho_1}ds\notag\\ &&+{\bf \Theta \Phi}{\mathcal V_{1}}(s)u^{\theta\gamma}\int_{0}^{u}(u-s)^{\alpha(\tilde{\gamma}(\ep_1)-\theta)-1}s^{q_1^{*}} ds.
\end{eqnarray*}

Introducing the change $s=u\xi$, $ds=ud\xi$, yields
\begin{eqnarray*}
	u^{\theta\gamma}\left\| \int_{0}^{u}(u-s)^{\alpha-1}\mathcal{Q}^{\alpha}_{u,s}(\mathcal{A})f(s,\zeta(s))ds\right\|_{\mathbb{B}_{1+\theta}}\notag&\leq& {\bf \Theta \Phi} u^{\theta\gamma}\lambda_{\ep_1}(u)^{\rho_1} \int_{0}^{1}(u-u\xi)^{\alpha(\tilde{\gamma}(\ep_1)-\theta)-1}(u\xi)^{-\rho_1\ep_1}ud\xi\notag\\
	&&+{\bf \Theta \Phi}{\mathcal V_{1}}(u)u^{\theta\gamma}\int_{0}^{1}(u-u\xi)^{\alpha(\tilde{\gamma}(\ep_1)-\theta)-1}(u\xi)^{q_1^{*}}t d\xi\notag\\&=& {\bf \Theta \Phi} u^{\theta\gamma+\alpha(\tilde{\gamma}(\ep_1)-\theta)-\rho_1\ep_1}\lambda_{\ep_1}(u)^{\rho_1} \int_{0}^{1}(1-\xi)^{\alpha(\tilde{\gamma}(\ep_1)-\theta)-1}\xi^{-\rho_1\ep_1}d\xi\notag\\
	&&+{\bf \Theta \Phi}{\mathcal V_{1}}(u)u^{\theta\gamma+\alpha(\tilde{\gamma}(\ep_1)-\theta)+q_1^{*}}\int_{0}^{1}(1-\xi)^{\alpha(\tilde{\gamma}(\ep_1)-\theta)-1}\xi^{q_1^{*}} d\xi\notag\\
	&=& {\bf \Theta \Phi} u^{\theta\gamma+\alpha(\tilde{\gamma}(\ep_1)-\theta)-\rho_1\ep_1}\lambda_{\ep_1}(u)^{\rho_1}\mathcal{B}\left( \alpha(\tilde{\gamma}(\ep_1)-\theta),1-\rho_1\ep_1\right)  \notag\\
	&&+{\bf \Theta \Phi}{\mathcal V_{1}}(u)u^{\theta\gamma+\alpha(\tilde{\gamma}(\ep_1)-\theta)+q_1^{*}}\mathcal{B}\left( \alpha(\tilde{\gamma}(\ep_1)-\theta),1+q_1^{*}\right)\notag\\
	&\leq& {\bf \Theta \Phi} \mathcal{B}_{\ep_1}^{\theta,\tilde{\gamma}}(\lambda_{\ep_1}(u)^{\rho_1}u^{\theta\gamma+\alpha(\tilde{\gamma}(\ep_1)-\theta)-\rho_1\ep_1}+\mathcal{V}_{1}(u)u^{\theta\gamma+\alpha(\tilde{\gamma}(\ep_1)-\theta)+q_1^{*}}),
\end{eqnarray*}
where, $0<\alpha\leq 1$ and $0\leq\beta\leq 1$.
\end{proof}

Taking $\beta\rightarrow 0$ in the {\bf Lemma} \ref{lemma2}, follows that $\gamma=\alpha$, then we have:
\begin{lemma}\label{lemmaA1}
	Let $f\in \mathcal{F}_1$. If $\zeta\in C((0,\tau],\mathbb{B}_{1+\ep})$, then for all $0\leq \theta<\tilde{\gamma}(\ep_1)$, $0<\alpha\leq 1$
\begin{align*}
	u^{\theta\alpha}\left\| \int_{0}^{u}(u-s)^{\alpha-1}\mathcal{Q}^{\alpha}_{u,s}(\mathcal{A})f(s,\zeta(s))ds\right\|_{\mathbb{B}_{1+\theta}}\leq {\bf \Theta \Phi} \mathcal{B}_{\ep_1}^{\theta,\tilde{\gamma}}(\lambda_{\ep_1}(u)^{\rho_1}u^{\alpha\tilde{\gamma}(\ep_1)-\rho_1\ep_1}+\mathcal{V}_{1}(u)u^{\alpha\tilde{\gamma}(\ep_1)+q_1^{*}},
	\end{align*}
 	for all $0<u<\tau$ and $\lambda_{\ep}=\displaystyle\sup_{s\in[0,\tau]}s^{\ep}\left\|\zeta(s)\right\| _{\mathbb{B}_{1+\ep}} $.
\end{lemma}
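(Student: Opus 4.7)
The plan is to observe that this statement is a direct specialization of Lemma \ref{lemma2}: since $\gamma = \alpha + \beta(1-\alpha)$, letting $\beta \to 0$ yields $\gamma = \alpha$. Substituting $\gamma = \alpha$ in the prefactor $u^{\theta\gamma}$ on the left side of Lemma \ref{lemma2} gives $u^{\theta\alpha}$, and substituting in the two exponents $\theta\gamma + \alpha(\tilde{\gamma}(\ep_1)-\theta) - \rho_1\ep_1$ and $\theta\gamma + \alpha(\tilde{\gamma}(\ep_1)-\theta) + q_1^{*}$ on the right-hand side collapses them (after absorbing $u^{\theta\alpha}$ from the left) to $\alpha\tilde{\gamma}(\ep_1) - \rho_1\ep_1$ and $\alpha\tilde{\gamma}(\ep_1) + q_1^{*}$, which is exactly the claimed bound.

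If one prefers a self-contained derivation rather than a limiting argument, the proof is essentially a verbatim repetition of the proof of Lemma \ref{lemma2} with $\gamma$ replaced by $\alpha$. The steps, in order, would be: (i) apply the bound of Remark \ref{remark1} to obtain
\begin{align*}
\left\|(u-s)^{\alpha-1}\mathcal{Q}^{\alpha}_{u,s}(\mathcal{A}) f(s,\zeta(s))\right\|_{\mathbb{B}_{1+\theta}} \leq \mathbf{\Theta}\,(u-s)^{\alpha(\tilde{\gamma}(\ep_1)-\theta)-1}\left\|f(s,\zeta(s))\right\|_{\mathbb{B}_{\tilde{\gamma}(\ep_1)}};
\end{align*}
(ii) use the growth estimate (\ref{Eq12}) from the definition of $\mathcal{F}_1$ together with the pointwise control $c(s) \leq \Phi\, s^{v}$ on the weight; (iii) introduce the supremum $\lambda_{\ep_1}(u)$ by writing $\|\zeta(s)\|_{\mathbb{B}_{1+\ep_1}}^{\rho_1} = s^{-\rho_1\ep_1}(s^{\ep_1}\|\zeta(s)\|_{\mathbb{B}_{1+\ep_1}})^{\rho_1}$; (iv) perform the change of variables $s = u\xi$ to recognise two Beta integrals
\begin{align*}
\mathcal{B}(\alpha(\tilde{\gamma}(\ep_1)-\theta),1-\rho_1\ep_1) \quad \text{and} \quad \mathcal{B}(\alpha(\tilde{\gamma}(\ep_1)-\theta),1+q_1^{*});
\end{align*}
(v) bound both Beta values by the maximum $\mathcal{B}_{\ep_1}^{\theta,\tilde{\gamma}}$ to collect the final inequality.

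I do not expect any genuine obstacle here: this is stated as a corollary of Lemma \ref{lemma2} and the computation differs only in that $\theta\gamma$ becomes $\theta\alpha$. The only bookkeeping point worth flagging is that the hypothesis $f \in \mathcal{F}_1$ enters via the one-variable growth bound (\ref{Eq12}), so it is $q_1^{*}$ (not the Lipschitz exponent $q_1$) that appears on the right, consistent with the way $\mathcal{B}_{\ep_1}^{\theta,\tilde{\gamma}}$ is constructed.
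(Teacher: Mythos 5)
Your proposal is correct and matches the paper exactly: the paper gives no separate proof of this lemma, obtaining it precisely as the $\beta\to 0$ (hence $\gamma=\alpha$) specialization of Lemma \ref{lemma2}, under which the exponents $\theta\gamma+\alpha(\tilde{\gamma}(\ep_1)-\theta)$ collapse to $\alpha\tilde{\gamma}(\ep_1)$ as you note. Your optional self-contained derivation is just the proof of Lemma \ref{lemma2} rerun with $\gamma=\alpha$, so there is nothing further to add.
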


On the other hand, taking $\alpha=1$ in the {\bf Lemma} \ref{lemmaA1}, we have the special case, given by:
\begin{lemma} Let $f\in \mathcal{F}_1$. If $\zeta\in C((0,\tau),\mathbb{B}_{1+\ep})$, then for all $0\leq \theta<\tilde{\gamma}(\ep_1)$,
\begin{align*}
	u^{\theta}\left\| \int_{0}^{u}e^{-\mathcal{A}(u-s)}f(s,\zeta(s))ds\right\|_{\mathbb{B}_{1+\theta}}\leq {\bf \Theta \Phi} \mathcal{B}_{\ep_1}^{\theta,\tilde{\gamma}}(\lambda_{\ep_1}(u)^{\rho_1}u^{\tilde{\gamma}(\ep_1)-\rho_1\ep_1}+\mathcal{V}_{1}(u)u^{\tilde{\gamma}(\ep_1)+q_1^{*}},
	\end{align*}
for all $0<u<\tau$ and $\lambda_{\ep}=\displaystyle\sup_{s\in[0,\tau]}s^{\ep}\left\|\zeta(s)\right\| _{\mathbb{B}_{1+\ep}} $.
\end{lemma}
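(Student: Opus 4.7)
The plan is to obtain this statement as an immediate specialization of \textbf{Lemma} \ref{lemmaA1} to the case $\alpha=1$, exactly parallel to how \textbf{Lemma} \ref{lemmaA1} was derived from \textbf{Lemma} \ref{lemma2} by sending $\beta \to 0$. First I would recall the identification $\mitt_{1,1}(z) = e^{z}$ noted in the introduction (the special case $\alpha=\beta=1$ of the Mittag-Leffler function reduces to the exponential). Consequently, when $\alpha = 1$, the kernel inside the integral simplifies as
\begin{align*}
(u-s)^{\alpha-1}\,\mathcal{Q}^{\alpha}_{u,s}(\mathcal{A}) \;=\; (u-s)^{0}\,\mitt_{1,1}(-(u-s)\mathcal{A}) \;=\; e^{-\mathcal{A}(u-s)},
\end{align*}
so the left-hand side of the estimate in \textbf{Lemma} \ref{lemmaA1} becomes $u^{\theta}\bigl\|\int_{0}^{u} e^{-\mathcal{A}(u-s)} f(s,\zeta(s))\,ds\bigr\|_{\mathbb{B}_{1+\theta}}$.

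Next I would substitute $\alpha = 1$ into the exponents appearing on the right-hand side of \textbf{Lemma} \ref{lemmaA1}: $u^{\theta\alpha}$ becomes $u^{\theta}$, the factor $u^{\alpha\tilde{\gamma}(\ep_1)-\rho_1\ep_1}$ becomes $u^{\tilde{\gamma}(\ep_1)-\rho_1\ep_1}$, and $u^{\alpha\tilde{\gamma}(\ep_1)+q_1^{*}}$ becomes $u^{\tilde{\gamma}(\ep_1)+q_1^{*}}$. The constant $\mathcal{B}_{\ep_1}^{\theta,\tilde{\gamma}}$, the constants $\bf \Theta$ and $\bf \Phi$, and the quantity $\lambda_{\ep_1}$ are unaffected by this choice of $\alpha$, so the bound transcribes directly into the desired form.

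The only items to verify are that all the admissibility restrictions behind \textbf{Lemma} \ref{lemmaA1} remain in force at the endpoint $\alpha = 1$, namely $0 \le \theta < \tilde{\gamma}(\ep_1)$, the membership $f \in \mathcal{F}_{1}$, and the continuity hypothesis on $\zeta$ (here the interval is open, $(0,\tau)$, which is harmless since the estimate is pointwise in $u$ and involves only the values of $\zeta$ on $(0,u)$). I do not anticipate any substantive obstacle: the genuine analytic work (the splitting into the $\rho_1$-growth term and the $\mathcal{V}_{1}$-term, the $s = u\xi$ rescaling, and the identification of the beta integrals inside $\mathcal{B}_{\ep_1}^{\theta,\tilde{\gamma}}$) has already been performed in the proof of \textbf{Lemma} \ref{lemma2}; the present statement is a formal corollary obtained by two successive limit/specialization choices $\beta \to 0$ then $\alpha \to 1$, and the proof reduces to a one-line invocation of \textbf{Lemma} \ref{lemmaA1}.
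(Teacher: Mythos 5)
Your proposal is correct and matches the paper's own treatment: the paper states this lemma precisely as the specialization $\alpha=1$ of \textbf{Lemma} \ref{lemmaA1} (itself the $\beta\rightarrow 0$ case of \textbf{Lemma} \ref{lemma2}), with the kernel reducing to $e^{-\mathcal{A}(u-s)}$ since $\mitt_{1,1}(z)=e^{z}$, exactly as you argue.
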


\begin{lemma}\label{lemma3} Let $f\in \mathcal{F}_1$ and $\zeta,\mathcal{V}\in C((0,\tau],\mathbb{B}_{1+\ep_1})$ be such that $u^{\ep_1}\left\|\zeta(u) \right\|_{\mathbb{B}_{1+\ep_{1}}}\leq \mu$ and $u^{\ep_1}\left\|{\mathcal V(u)} \right\|_{\mathbb{B}_{1+\ep_{1}}}\leq \mu$ for some $\mu>0$. Then, for all $0\leq \theta<\tilde{\gamma}(\ep_1)$ and $0<u\leq\tau$, $0\leq\gamma\leq 1$, yields
	\begin{align*}
u^{\theta\gamma}\left\| \int_{0}^{u}(u-s)^{\alpha-1}\mathcal{Q}^{\alpha}_{u,s}(\mathcal{A})\left[ f(s,\zeta(s))-f(s,\mathcal{V}(s))\right]ds\right\|_{\mathbb{B}_{1+\varepsilon_{1}}}\leq \Gamma_{\theta,\gamma}(u)\displaystyle\sup_{s\in[0,\tau]}s^{\ep_1}\left\| \zeta(s)-\mathcal{V}(s)\right\|_{\mathbb{B}_{1+\ep_1}},
	\end{align*}
where $\Gamma_{\theta,\gamma}(u)={\bf \Theta \Phi}\mathcal{B}_{\ep_1}^{\theta,\gamma}u^{\alpha\tilde{\gamma}(\ep_1)}\left( 2\mu^{\rho_1-1}u^{\theta\gamma-\alpha\theta-\rho_1\ep_1}+{\mathcal V_{1}}(u)u^{\theta\gamma-\alpha\theta-{\ep_1 q_1}}\right) $.
\end{lemma}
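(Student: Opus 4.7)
The plan is to follow almost verbatim the template set up in the proof of \textbf{Lemma} \ref{lemma2}, but using the Lipschitz-type estimate (\ref{Eq11}) in place of the growth estimate (\ref{Eq12}). The first step is to invoke \textbf{Remark} \ref{remark1} to pass from the $\mathbb{B}_{1+\theta}$ (or $\mathbb{B}_{1+\ep_1}$) norm of the integrand to its $\mathbb{B}_{\tilde{\gamma}(\ep_1)}$ norm, extracting the singular kernel $(u-s)^{\alpha(\tilde{\gamma}(\ep_1)-\theta)-1}$ with multiplicative constant $\bf{\Theta}$. This reduces the problem to bounding
\[
u^{\theta\gamma}\int_{0}^{u}\bf{\Theta}(u-s)^{\alpha(\tilde{\gamma}(\ep_1)-\theta)-1}\|f(s,\zeta(s))-f(s,\mathcal{V}(s))\|_{\mathbb{B}_{\tilde{\gamma}(\ep_1)}}\,ds.
\]

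Next, I would plug (\ref{Eq11}) into the integrand, obtaining two natural pieces: a \emph{Lipschitz piece} carrying the factor $(\|\zeta(s)\|_{\mathbb{B}_{1+\ep_1}}^{\rho_1-1}+\|\mathcal{V}(s)\|_{\mathbb{B}_{1+\ep_1}}^{\rho_1-1})\|\zeta(s)-\mathcal{V}(s)\|_{\mathbb{B}_{1+\ep_1}}$, and a \emph{perturbation piece} carrying $\mathcal{V}_1(s)s^{q_1}\|\zeta(s)-\mathcal{V}(s)\|_{\mathbb{B}_{1+\ep_1}}$, each multiplied by $c(s)\leq \bf{\Phi} s^{v}$. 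At this point I would use the a priori bounds $s^{\ep_1}\|\zeta(s)\|_{\mathbb{B}_{1+\ep_1}}\leq\mu$ and $s^{\ep_1}\|\mathcal{V}(s)\|_{\mathbb{B}_{1+\ep_1}}\leq\mu$ to estimate
\[
\|\zeta(s)\|_{\mathbb{B}_{1+\ep_1}}^{\rho_1-1}+\|\mathcal{V}(s)\|_{\mathbb{B}_{1+\ep_1}}^{\rho_1-1}\leq 2\mu^{\rho_1-1}s^{-(\rho_1-1)\ep_1},
\]
and then factor out $\displaystyle\sup_{s\in[0,\tau]}s^{\ep_1}\|\zeta(s)-\mathcal{V}(s)\|_{\mathbb{B}_{1+\ep_1}}$ by writing $\|\zeta(s)-\mathcal{V}(s)\|_{\mathbb{B}_{1+\ep_1}}=s^{-\ep_1}\bigl(s^{\ep_1}\|\zeta(s)-\mathcal{V}(s)\|_{\mathbb{B}_{1+\ep_1}}\bigr)$. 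This combines to give an $s^{-\rho_1\ep_1}$ singularity in the Lipschitz piece and an $s^{q_1-\ep_1}$ weight in the perturbation piece.

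The final step is the change of variable $s=u\xi$, $ds=u\,d\xi$, which turns each remaining integral into a beta function on $[0,1]$: the Lipschitz piece produces $\mathcal{B}(\alpha(\tilde{\gamma}(\ep_1)-\theta),1-\rho_1\ep_1)\,u^{\alpha(\tilde{\gamma}(\ep_1)-\theta)-\rho_1\ep_1}$ and the perturbation piece produces $\mathcal{B}(\alpha(\tilde{\gamma}(\ep_1)-\theta),1+q_1-\ep_1)\,u^{\alpha(\tilde{\gamma}(\ep_1)-\theta)+q_1-\ep_1}$. Both beta values are dominated by $\mathcal{B}_{\ep_1}^{\theta,\tilde{\gamma}}$ (matching up to the indexing convention $\mathcal{B}_{\ep_1}^{\theta,\gamma}$ in the statement). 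Multiplying through by the outer $u^{\theta\gamma}$ and collecting the powers of $u$ as $u^{\alpha\tilde{\gamma}(\ep_1)}\cdot u^{\theta\gamma-\alpha\theta-\rho_1\ep_1}$ and $u^{\alpha\tilde{\gamma}(\ep_1)}\cdot u^{\theta\gamma-\alpha\theta+q_1-\ep_1}$ reconstructs precisely the advertised expression $\Gamma_{\theta,\gamma}(u)$.

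The only genuine obstacle is bookkeeping: one must verify that the two beta-function arguments are positive (so the integrals converge), i.e.\ that $1-\rho_1\ep_1>0$ and $1+q_1-\ep_1>0$, and that $\alpha(\tilde{\gamma}(\ep_1)-\theta)>0$. The first two are precisely built into the definition of the admissible class $\mathcal{F}_1$ (through $\rho\ep<1$ in \textbf{Definition} \ref{defi1} and $q_1\in[-v-\tilde{\gamma}(\ep)+\ep,0]$), and the last one is the hypothesis $\theta<\tilde{\gamma}(\ep_1)$. Beyond that the argument is a direct transcription of the proof of \textbf{Lemma} \ref{lemma2}, with (\ref{Eq11}) in place of (\ref{Eq12}) and the uniform bound $\mu$ in place of $\lambda_{\ep_1}(u)$.
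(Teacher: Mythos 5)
Your argument is correct and follows essentially the same route as the paper's own proof: reduce via Remark \ref{remark1} to the $\mathbb{B}_{\tilde{\gamma}(\ep_1)}$ norm with kernel $(u-s)^{\alpha(\tilde{\gamma}(\ep_1)-\theta)-1}$, insert the Lipschitz estimate (\ref{Eq11}), use the a priori bound $\mu$ together with the factorization $s^{-\ep_1}\bigl(s^{\ep_1}\left\|\zeta(s)-\mathcal{V}(s)\right\|_{\mathbb{B}_{1+\ep_1}}\bigr)$, and evaluate the two resulting integrals as beta functions after the substitution $s=u\xi$. The one discrepancy you gloss over — your exponent $u^{\theta\gamma-\alpha\theta+q_1-\ep_1}$ versus the $u^{\theta\gamma-\alpha\theta-\ep_1 q_1}$ displayed in the lemma's definition of $\Gamma_{\theta,\gamma}(u)$ — in fact agrees with what the paper's own computation yields, so it reflects a typographical slip in the statement rather than a gap in your argument.
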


\begin{proof} Indeed, doing the same procedure in {\bf Lemma} \ref{lemma2}, and using the Eq. (\ref{Eq11}), yields
\begin{align*}
	u^{\theta\gamma}&\left\| \int_{0}^{u}(u-s)^{\alpha-1}\mathcal{Q}^{\alpha}_{u,s}(\mathcal{A})\left[ f(s,\zeta(s))-f(s,\mathcal{V}(s))\right]ds\right\|_{\mathbb{B}_{1+\theta}}\\
	&\leq u^{\theta\gamma} \int_{0}^{u}{\bf \Theta}(u-s)^{\alpha(\tilde{\gamma}(\ep_1)-\theta)-1}\left\|  f(s,\zeta(s))-f(s,\mathcal{V}(s))\right\|_{\mathbb{B}_{\tilde{\gamma}(\ep_1)}}ds \\
	&\leq u^{\theta\gamma} \int_{0}^{u}{\bf \Theta}(u-s)^{\alpha(\tilde{\gamma}(\ep_1)-\theta)-1}c(s))\left\|  \zeta(s)-\mathcal{V}(s)\right\|_{\mathbb{B}_{1+\ep_1}}\left(\left\| \zeta(s)\right\|_{\mathbb{B}_{1+\ep_1}}^{\rho_1-1}+\left\| \mathcal{V}(s)\right\|_{\mathbb{B}_{1+\ep_1}}^{\rho_1-1}+{\mathcal V_{1}}(s)s^{q_1}\right) ds \\
	&\leq {\bf \Theta \Phi}\left\lbrace  u^{\theta\gamma} \int_{0}^{u}(u-s)^{\alpha(\tilde{\gamma}(\ep_1)-\theta)-1}\left(2s^{-\rho_1\ep_1}\mu^{\rho_1-1}+{\mathcal V_{1}}(s)s^{-\ep_1+q_1}\right) ds\right\rbrace  \displaystyle\sup_{s\in[0,\tau]}s^{\ep_1}\left\| \zeta(s)-\mathcal{V}(s)\right\|_{\mathbb{B}_{1+\ep_1}}.
	\end{align*}
	
Introducing the change $s=t\xi$, one has
\begin{eqnarray*}
	&&u^{\theta\gamma}\left\| \int_{0}^{u}(u-s)^{\alpha-1}\mathcal{Q}^{\alpha}_{u,s}(\mathcal{A})\left[ f(s,\zeta(s))-f(s,\mathcal{V}(s))\right]ds\right\|_{\mathbb{B}_{1+\theta}}\\
	&\leq&
	{\bf \Theta \Phi}\left\lbrace  u^{\theta\gamma} \int_{0}^{1}(u-u\xi)^{\alpha(\tilde{\gamma}(\ep_1)-\theta)-1}\left(2(u\xi)^{-\rho_1\ep_1}\mu^{\rho_1-1}+{\mathcal V_{1}}(u)(u\xi)^{-\ep_1+q_1}\right) t d\xi\right\rbrace  \displaystyle\sup_{s\in[0,\tau]}s^{\ep_1}\left\| \zeta(s)-\mathcal{V}(s)\right\|_{\mathbb{B}_{1+\ep_1}}\\
	&=&{\bf \Theta \Phi}\left\lbrace  u^{\theta\gamma+\alpha(\tilde{\gamma}(\ep_1)-\theta)-\rho_1\ep_1} \int_{0}^{1}(1-\xi)^{\alpha(\tilde{\gamma}(\ep_1)-\theta)-1}2\xi^{-\rho_1\ep_1}\mu^{\rho_1-1}d\xi\right. \\
	&&+\left. u^{\theta\gamma+\alpha(\tilde{\gamma}(\ep_1)-\theta)-\ep_1+q_1}{\mathcal V_{1}}(u)\int_{0}^{1}(1-\xi)^{\alpha(\tilde{\gamma}(\ep_1)-\theta)-1}\xi^{-\ep_1+q_1}d\xi\right\rbrace  \displaystyle\sup_{s\in[0,\tau]}s^{\ep_1}\left\| \zeta(s)-\mathcal{V}(s)\right\|_{\mathbb{B}_{1+\ep_1}}\\
	&=&{\bf \Theta \Phi}\left\lbrace 2\mu^{\rho_1-1} u^{\theta\gamma+\alpha(\tilde{\gamma}(\ep_1)-\theta)-\rho_1\ep_1}\mathcal{B}\left( \alpha(\gamma(\ep_1)-\theta),1-\rho_1\ep_1\right)\right.  \\
	&&+\left. {\mathcal V_{1}}(u)u^{\theta\gamma+\alpha(\tilde{\gamma}(\ep_1)-\theta)-\ep_1+q_1}\mathcal{B}\left( \alpha(\gamma(\ep_1)-\theta),1-\ep_1+q_1\right)\right\rbrace  \displaystyle\sup_{s\in[0,\tau]}s^{\ep_1}\left\| \zeta(s)-\mathcal{V}(s)\right\|_{\mathbb{B}_{1+\ep_1}}\\
	&\leq& {\bf \Theta \Phi} \mathcal{B}_{\ep_1}^{\theta,\gamma}\left(2\mu^{\rho_1-1} u^{\theta\gamma+\alpha(\tilde{\gamma}(\ep_1)-\theta)-\rho_1\ep_1}+{\mathcal V_{1}}(u)u^{\theta\gamma+\alpha(\tilde{\gamma}(\ep_1)-\theta)-\ep_1+q_1} \right) \displaystyle\sup_{s\in[0,\tau]}s^{\ep_1}\left\| \zeta(s)-\mathcal{V}(s)\right\|_{\mathbb{B}_{1+\ep_1}}.
	\end{eqnarray*}
\end{proof}

\begin{lemma}\label{lemma4}	Let $g\in \mathcal{F}_2$. If $\zeta\in C((0,\tau],\mathbb{B}_{1+\ep_2})$, then for all $0\leq \theta<\eta(\ep_2)$ and $\gamma=\alpha+\beta(1-\alpha)$,
\begin{eqnarray*}
&&u^{\theta\gamma}\left\| \int_{0}^{u}(u-s)^{\alpha-1}\mathcal{Q}^{\alpha}_{u,s}(\mathcal{A})\int_{0}^{s}g(s-r,\zeta(r))drds\right\|_{\mathbb{B}_{1+\theta}}\notag\\&&\leq {\bf \Theta \Phi}\mathcal{B}_{\ep_2}^{\theta,\eta}\left( \frac{\lambda_{\ep_2}(u)^{\rho_2}}{1+v-\rho_2\ep_2}u^{\theta\gamma+1+v-\rho_2\ep_2}\right.+\left. {\mathcal V_{2}}(u)\mathcal{B}(1+q_{1}^{*},1+v)u^{\theta\gamma+\alpha+v+q_1^{*}+1}\right), 
\end{eqnarray*}
for all $0<u\leq\tau$.
\end{lemma}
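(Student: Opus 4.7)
The proof follows the same pattern as Lemma \ref{lemma2}, with one additional convolution integration over $r\in[0,s]$ to be absorbed.

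\textbf{Step 1.} I would push the $\mathbb{B}_{1+\theta}$-norm inside both integrals and apply Remark \ref{remark1} with target regularity $\eta(\ep_2)$ in place of $\tilde{\gamma}(\ep_1)$, producing the kernel factor
\[
\Theta\,(u-s)^{\alpha(\eta(\ep_2)-\theta)-1}
\]
multiplying $\|g(s-r,\zeta(r))\|_{\mathbb{B}_{\eta(\ep_2)}}$ after interchanging norm and integral.

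\textbf{Step 2.} I would invoke the growth estimate (\ref{Eq12}) from $\mathcal{F}_2$, together with the assumed bound $c(s-r)\le C(s-r)^{v}$ and the inequality $\|\zeta(r)\|_{\mathbb{B}_{1+\ep_2}}^{\rho_2}\le \lambda_{\ep_2}(u)^{\rho_2}\,r^{-\rho_2\ep_2}$ coming from the definition of $\lambda_{\ep_2}$. This splits the resulting triple integral into two independent pieces: a $\lambda$-piece controlling the polynomial growth of $\zeta$ and a $\mathcal{V}_2$-piece controlling the inhomogeneous term.

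\textbf{Step 3.} I would evaluate the inner integral in $r$ via the substitution $r=s\xi$, reducing each piece to a Beta-function identity in $\xi$ times a power of $s$. Reinserting into the outer integral, I would then perform the substitution $s=u\xi'$ to reduce the remaining integral to a second Beta expression in $\xi'$, which is controlled by $\mathcal{B}_{\ep_2}^{\theta,\eta}$ as defined immediately before Lemma \ref{lemma2}. Summing the resulting powers of $u$ then matches the stated exponents $\theta\gamma+1+v-\rho_2\ep_2$ and $\theta\gamma+\alpha+v+q_1^{*}+1$.

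The main obstacle I anticipate is the book-keeping of exponents through two successive changes of variables, and verifying convergence of every Beta integral invoked. Specifically, one needs simultaneously $\alpha(\eta(\ep_2)-\theta)>0$, $1+v>0$, $1-\rho_2\ep_2>0$ and $1+q_1^{*}>0$; these all follow from the standing hypotheses $\theta<\eta(\ep_2)$, $\rho\ep-1<v\le 0$, $\rho_2\ep_2<1$, and $q_1^{*}\in[-v-\eta(\ep_2),0]$ encoded in the class $\mathcal{F}_2$. Once the convergence of each Beta factor is checked, the remainder of the argument is a purely mechanical adaptation of the calculation carried out in Lemma \ref{lemma2}.
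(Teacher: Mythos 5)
Your outline reproduces the paper's proof essentially verbatim: push the norm inside, apply Remark \ref{remark1} to obtain the kernel $(u-s)^{\alpha(\eta(\ep_2)-\theta)-1}$, use the growth bound of type Eq. (\ref{Eq12}) together with $c(\cdot)\leq \Phi(\cdot)^{v}$ and the definition of $\lambda_{\ep_2}$, and then the two rescalings $r=s\xi$ and $s=u\xi$ yielding Beta factors absorbed into $\mathcal{B}_{\ep_2}^{\theta,\eta}$, with exactly the convergence conditions you list. The only cosmetic deviation is that you evaluate $c$ at $s-r$ where the paper uses $c(r)\leq \Phi r^{v}$, which merely replaces the constant $1/(1+v-\rho_2\ep_2)$ by $\mathcal{B}(1+v,1-\rho_2\ep_2)$ without changing the powers of $u$.
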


\begin{proof} For $g\in \mathcal{F}_2$, we obtain
\begin{align*}
u^{\theta\gamma}&\left\| \int_{0}^{u}(u-s)^{\alpha-1}\mathcal{Q}^{\alpha}_{u,s}(\mathcal{A})\int_{0}^{s}g(s-r,\zeta(r))drds\right\|_{\mathbb{B}_{1+\theta}}\\
&\leq u^{\theta\gamma}\int_{0}^{u}{\bf \Theta}(u-s)^{\alpha(\eta(\ep_2)-\theta)-1}\int_{0}^{s}\left\|g(s-r,\zeta(r))\right\|_{\mathbb{B}_{\eta(\ep_2)}}drds\\
&\leq {\bf \Theta} u^{\theta\gamma}\int_{0}^{u}(u-s)^{\alpha(\eta(\ep_2)-\theta)-1}\int_{0}^{s}c(r)\left( \left\|\zeta(r)\right\|_{\mathbb{B}_{1+\ep_2}}^{\rho_2}+{\mathcal V_{2}}(s-r)(s-r)^{q_1^{*}}\right) drds\\
&\leq {\bf \Theta \Phi} u^{\theta\gamma}\int_{0}^{u}(u-s)^{\alpha(\eta(\ep_2)-\theta)-1}\int_{0}^{s}r^{v}\left(r^{-\rho_2\ep_2}\left( r^{\ep_2} \left\|\zeta(r)\right\|_{\mathbb{B}_{1+\ep_2}}\right) ^{\rho_2}+{\mathcal V_{2}}(s-r)(s-r)^{q_1^{*}}\right) drds\\
&\leq {\bf \Theta \Phi} u^{\theta\gamma}\int_{0}^{u}(u-s)^{\alpha(\eta(\ep_2)-\theta)-1}\left\lbrace \lambda_{\ep_2}(u)^{\rho_2}\int_{0}^{s}r^{v-\rho_2\ep_2}dr+{\mathcal V_{2}}(u)\int_{0}^{s}(s-r)^{q_1^{*}}r^v dr\right\rbrace ds.
\end{align*}

Making the following change $r=s\xi$, $dr=sd\xi$, yields
\begin{eqnarray*}
&&u^{\theta\gamma}\left\| \int_{0}^{u}(u-s)^{\alpha-1}\mathcal{Q}^{\alpha}_{u,s}(\mathcal{A})\int_{0}^{s}g(s-r,\zeta(r))drds\right\|_{\mathbb{B}_{1+\theta}}\notag\\
&\leq& {\bf \Theta \Phi} u^{\theta\gamma}\int_{0}^{u}(u-s)^{\alpha(\eta(\ep_2)-\theta)-1}\left\lbrace \lambda_{\ep_2}(u)^{\rho_2}\frac{s^{v-\rho_2\ep_2+1}}{v-\rho_2\ep_2+1}+{\mathcal V_{2}}(u)\int_{0}^{1}(s-s\xi)^{q_1^{*}}(s\xi)^v sd\xi \right\rbrace ds\notag\\
&=&{\bf \Theta \Phi} u^{\theta\gamma}\int_{0}^{u}(u-s)^{\alpha(\eta(\ep_2)-\theta)-1}\left\lbrace \lambda_{\ep_2}(u)^{\rho_2}\frac{s^{v-\rho_2\ep_2+1}}{v-\rho_2\ep_2+1}+{\mathcal V_{2}}(u)s^{\alpha+v+q_1^{*}+1}\int_{0}^{1}(1-\xi)^{q_1^{*}}\xi^v d\xi \right\rbrace ds\notag\\
&=&{\bf \Theta \Phi} u^{\theta\gamma}\int_{0}^{u}(u-s)^{\alpha(\eta(\ep_2)-\theta)-1}\lambda_{\ep_2}(u)^{\rho_2}\frac{s^{v-\rho_2\ep_2+1}}{v-\rho_2\ep_2+1}ds\notag\\
&&+{\bf \Theta \Phi} u^{\theta\gamma}{\mathcal V_{2}}(u)\int_{0}^{u}(u-s)^{\alpha(\eta(\ep_2)-\theta)-1}s^{\alpha+v+q_1^{*}+1}\mathcal{B}(1+q_{1}^{*},1+v) ds\notag\\
&=&{\bf \Theta \Phi} u^{\theta\gamma}\frac{\lambda_{\ep_2}(u)^{\rho_2}}{v-\rho_2\ep_2+1}\int_{0}^{u}(u-s)^{\alpha(\eta(\ep_2)-\theta)-1}s^{v-\rho_2\ep_2+1}ds\notag\\
&&+{\bf \Theta \Phi} u^{\theta\gamma}{\mathcal V_{2}}(u)\mathcal{B}(1+q_{1}^{*},1+v)\int_{0}^{u}(u-s)^{\alpha(\eta(\ep_2)-\theta)-1}s^{\alpha+v+q_1^{*}+1} ds\notag\\
&=&{\bf \Theta \Phi} u^{\theta\gamma-\alpha(\eta(\ep_2)-\theta)+1}\frac{\lambda_{\ep_2}(u)^{\rho_2}}{v-\rho_2\ep_2+1}\int_{0}^{u}\left( 1-\frac{s}{u}\right) ^{\alpha(\eta(\ep_2)-\theta)-1}s^{v-\rho_2\ep_2+1}ds\notag\\
&&+{\bf \Theta \Phi} u^{\theta\gamma-\alpha(\eta(\ep_2)-\theta)+1}{\mathcal V_{2}}(u)\mathcal{B}(1+q_{1}^{*},1+v)\int_{0}^{u}\left( 1-\frac{s}{u}\right) ^{\alpha(\eta(\ep_2)-\theta)-1}s^{\alpha+v+q_1^{*}+1} ds.
\end{eqnarray*}

Since,
\begin{align*}
u^{\theta\gamma-\alpha(\eta(\ep_2)-\theta)+1}\int_{0}^{u}\left( 1-\frac{s}{u}\right) ^{\alpha(\eta(\ep_2)-\theta)-1}s^{v-\rho_2\ep_2+1}ds\leq u^{\theta\gamma+1+v-\rho_2\ep_2}\mathcal{B}_{\ep_2}^{\theta,\eta},
\end{align*}
and
\begin{align*}
u^{\theta\gamma-\alpha(\eta(\ep_2)-\theta)+1}\int_{0}^{u}\left( 1-\frac{s}{u}\right) ^{\alpha(\eta(\ep_2)-\theta)-1}s^{\alpha+v+q_1^{*}+1} ds\leq u^{\theta\gamma+\alpha+v+q_1^{*}+1}\mathcal{B}_{\ep_2}^{\theta,\eta},
\end{align*}
follows that
\begin{eqnarray*}
&&u^{\theta\gamma}\left\| \int_{0}^{u}(u-s)^{\alpha-1}\mathcal{Q}^{\alpha}_{u,s}(\mathcal{A})\int_{0}^{s}g(s-r,\zeta(r))drds\right\|_{\mathbb{B}_{1+\theta}}\notag\\
&\leq& {\bf \Theta \Phi} \frac{\lambda_{\ep_2}(u)^{\rho_2}}{v-\rho_2\ep_2+1}u^{\theta\gamma+1+v-\rho_2\ep_2}\mathcal{B}_{\ep_2}^{\theta,\eta}+{\bf \Theta \Phi} {\mathcal V_{2}}(u)\mathcal{B}(1+q_{1}^{*},1+v)u^{\theta\gamma+\alpha+v+q_1^{*}+1}\mathcal{B}_{\ep_2}^{\theta,\eta}\\
\notag\\
&=&{\bf \Theta \Phi}\mathcal{B}_{\ep_2}^{\theta,\eta}\left( \frac{\lambda_{\ep_2}(u)^{\rho_2}}{1+v-\rho_2\ep_2}u^{\theta\gamma+1+v-\rho_2\ep_2}+ {\mathcal V_{2}}(u)\mathcal{B}(1+q_{1}^{*},1+v)u^{\theta\gamma+\alpha+v+q_1^{*}+1}\right).
\end{eqnarray*}
\end{proof}

\begin{lemma}\label{lemma5} Let $g\in \mathcal{F}_2$ and $\zeta,\mathcal{V}\in C((0,\tau],\mathbb{B}_{1+\ep_2})$ be such that $u^{\ep_2}\left\| \zeta(u)\right\|_{\mathbb{B}_{1+\ep_2}}\leq \mu $, $u^{\ep_2}\left\| {\mathcal V(u)}\right\|_{\mathbb{B}_{1+\ep_2}}\leq \mu $ for some $\mu>0$. Then, for all $0\leq \theta<\eta(\ep_2)$ and $0<u\leq\tau$, yields
	\begin{align*}
	u^{\theta\gamma}&\left\| \int_{0}^{u}(u-s)^{\alpha-1}\mathcal{Q}^{\alpha}_{u,s}(\mathcal{A})\int_{0}^{s}\left[ g(s-r,\zeta(r))-g(s-r,\mathcal{V}(r))\right] dr ds\right\|_{\mathbb{B}_{1+\theta}}\leq \Gamma_{\theta,\gamma}^2(u) \displaystyle\sup_{s\in[0,\tau]}s^{\ep_2}\left\| \zeta(s)-\mathcal{V}(s)\right\|_{\mathbb{B}_{1+\ep_2}},
	\end{align*}
	where $\Gamma_{\theta,\gamma}^{2}(u)={\bf \Theta \Phi}\mathcal{B}_{\ep_2}^{\theta,\eta}\left(\dfrac{ 2\mu^{\rho_2-1}u^{\theta\gamma+1+v+\alpha(\eta(\ep_2)-\theta)-\rho_2\ep_2}}{1+v-\rho_2\ep_2}+{\mathcal V_{2}}(u)\mathcal{B}(1+q_1,1+v-\ep_2)u^{\theta\gamma+\alpha(\eta(\ep_2)-\theta)+q_1-\ep_2}\right) $.
\end{lemma}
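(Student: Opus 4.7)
The argument mirrors that of \textbf{Lemma} \ref{lemma3}, transposed to the double-integral geometry treated in \textbf{Lemma} \ref{lemma4}. First, I would invoke the second estimate of \textbf{Remark} \ref{remark1} to absorb the solution operator, trading the $\mathbb{B}_{1+\theta}$-norm on the left for $\left\|\cdot\right\|_{\mathbb{B}_{\eta(\ep_2)}}$ inside, at the cost of the kernel $(u-s)^{\alpha(\eta(\ep_2)-\theta)-1}$. This reduces the task to bounding
\begin{equation*}
{\bf \Theta}\, u^{\theta\gamma}\int_0^u (u-s)^{\alpha(\eta(\ep_2)-\theta)-1}\int_0^s \left\|g(s-r,\zeta(r))-g(s-r,\mathcal{V}(r))\right\|_{\mathbb{B}_{\eta(\ep_2)}}\,dr\,ds.
\end{equation*}

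Second, I would apply the Lipschitz-type inequality (\ref{Eq11}) for $g\in \mathcal{F}_2$ together with the polynomial bound $c(r)\leq {\bf \Phi} r^{v}$. The hypotheses $r^{\ep_2}\left\|\zeta(r)\right\|_{\mathbb{B}_{1+\ep_2}},\, r^{\ep_2}\left\|\mathcal{V}(r)\right\|_{\mathbb{B}_{1+\ep_2}}\leq \mu$ allow me to control the growth factor by $2\mu^{\rho_2-1} r^{-(\rho_2-1)\ep_2}+\mathcal{V}_2(s-r)(s-r)^{q_1}$, and pulling $\sup_{s\in[0,\tau]}s^{\ep_2}\left\|\zeta(s)-\mathcal{V}(s)\right\|_{\mathbb{B}_{1+\ep_2}}$ out of the integrals contributes an additional weight $r^{-\ep_2}$. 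Using monotonicity of $\mathcal{V}_2$ to replace $\mathcal{V}_2(s-r)$ by $\mathcal{V}_2(u)$, the integrand splits into two non-negative pieces: a \emph{power piece} proportional to $r^{v-\rho_2\ep_2}$ and a \emph{memory piece} proportional to $r^{v-\ep_2}(s-r)^{q_1}$.

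Third, I would carry out the two changes of variable used in the proof of \textbf{Lemma} \ref{lemma4}: the substitution $r=s\xi$ in the inner integral reduces the power piece to $s^{v-\rho_2\ep_2+1}/(v-\rho_2\ep_2+1)$ and the memory piece to $s^{v-\ep_2+q_1+1}\mathcal{B}(1+q_1,1+v-\ep_2)$; the subsequent substitution $s=u\tilde{\xi}$ in the outer integral converts each remaining $s$-integral into a beta integral whose first argument is $\alpha(\eta(\ep_2)-\theta)$, and each such beta value is majorized by $\mathcal{B}_{\ep_2}^{\theta,\eta}$. Collecting the surviving powers of $u$ together with the constant factor ${\bf \Theta \Phi}\, \mathcal{B}_{\ep_2}^{\theta,\eta}$ yields the asserted $\Gamma_{\theta,\gamma}^{2}(u)$ and closes the proof.

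The main obstacle is bookkeeping: I must keep the two inner exponents $v-\rho_2\ep_2$ and $v-\ep_2+q_1$ strictly above $-1$ so that the inner integrals converge — this is the role of the parameter ranges $q_1\in [-v-\eta(\ep_2)+\ep_2,0]$ and $v>\rho_2\ep_2-1$ that are built into the definition of $\mathcal{F}_2$ — while simultaneously ensuring the outer kernel exponent $\alpha(\eta(\ep_2)-\theta)-1>-1$, guaranteed by $0\leq \theta<\eta(\ep_2)$ and $\alpha\in(0,1)$. A secondary subtlety is that one must use the Lipschitz exponent $q_1$ here (not $q_1^{*}$), which is why the memory piece produces precisely $\mathcal{B}(1+q_1,1+v-\ep_2)$ in the second summand of $\Gamma_{\theta,\gamma}^{2}(u)$.
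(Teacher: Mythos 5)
Your proposal is correct and follows essentially the same route as the paper's proof: the operator bound of Remark \ref{remark1}, the Lipschitz estimate (\ref{Eq11}) with $c(r)\leq {\bf \Phi}\,r^{v}$ and the hypotheses on $\zeta,\mathcal{V}$, monotonicity of $\mathcal{V}_2$, the substitutions $r=s\xi$ and $s=u\xi$, and the majorization of the resulting beta integrals by $\mathcal{B}_{\ep_2}^{\theta,\eta}$. The only caveat, shared with the paper's own computation, is that honest bookkeeping leaves the memory term with an extra factor $s^{1+v}$ (hence $u^{1+v}$ after the outer substitution) which the stated $\Gamma_{\theta,\gamma}^{2}(u)$ does not display; the paper silently discards it (in effect bounding $s^{1+v}\leq 1$), which is harmless when $\tau\leq 1$ as in the application, so your plan reproduces the intended argument.
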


\begin{proof} First, we estimate the following
	\begin{align}\label{eq*}
	\int_{0}^{s}&\left\| g(s-r,\zeta(r))-g(s-r,\mathcal{V}(r))\right\| _{\mathbb{B}_{\eta(\ep_2)}}dr\nonumber\\
	&\leq \int_{0}^{s}c(r)\left\| \zeta(r)-\mathcal{V}(r)\right\|_{\mathbb{B}_{1+\ep_2}}\left( \left\| \zeta(r)\right\| _{\mathbb{B}_{1+\ep_2}}^{\rho_2-1}+\left\| \mathcal{V}(r)\right\| _{\mathbb{B}_{1+\ep_2}}^{\rho_2-1}+{\mathcal V_{2}}(s-r)(s-r)^{q_1}\right) dr \nonumber\\
	&\leq C\left( 2\mu^{\rho_2-1}\int_{0}^{s}r^{v-\rho_2\ep_2}dr+{\mathcal V_{2}}(u)\int_{0}^{s}r^{v-\ep_2}(s-r)^{q_1}dr\right)\displaystyle\sup_{r\in[0,s]}r^{\ep_2}\left\| \zeta(r)-\mathcal{V}(r)\right\|_{\mathbb{B}_{1+\ep_2}}\nonumber\\
	&= C\left( 2\mu^{\rho_2-1}\frac{s^{1+v-\rho_2\ep_2}}{1+v-\rho_2\ep_2}+{\mathcal V_{2}}(u)s^{1+v+q_1-\ep_2}\int_{0}^{1}r^{v-\ep_2}(1-r)^{q_1}dr\right)\displaystyle\sup_{r\in[0,s]}r^{\ep_2}\left\| \zeta(r)-\mathcal{V}(r)\right\|_{\mathbb{B}_{1+\ep_2}}\nonumber\\
	&=Cs^{1+v}\left( \frac{2\mu^{\rho_2-1}s^{-\rho_2\ep_2}}{1+v-\rho_2\ep_2}+{\mathcal V_{2}}(u)\mathcal{B}(1+q_1,1+v-\ep_2)s^{q_1-\ep_2}\right) \displaystyle\sup_{r\in[0,s]}r^{\ep_2}\left\| \zeta(r)-\mathcal{V}(r)\right\|_{\mathbb{B}_{1+\ep_2}}.
	\end{align}

In this sense, using the Eq. (\ref{eq*}) yields
\begin{eqnarray*}
&&u^{\theta\gamma}\left\| \int_{0}^{u}(u-s)^{\alpha-1}\mathcal{Q}^{\alpha}_{u,s}(\mathcal{A})\int_{0}^{s}\left[ g(s-r,\zeta(r))-g(s-r,\mathcal{V}(r))\right] drds\right\|_{\mathbb{B}_{1+\theta}}\\
&\leq& {\bf \Theta}u^{\theta\gamma}\int_{0}^{u}(u-s)^{\alpha(\eta(\ep_2)-\theta)-1}\int_{0}^{s}\left\|  g(s-r,\zeta(r))-g(s-r,\mathcal{V}(r))\right\|_{\mathbb{B}_{1+\theta}}  drds\\
&\leq&\left(  {\bf \Theta}u^{\theta\gamma}\int_{0}^{u}(u-s)^{\alpha(\eta(\ep_2)-\theta)-1}Cs^{1+v}\frac{2\mu^{\rho_2-1}}{1+v-\rho_2\ep_2}s^{-\rho_2\ep_2}ds\right) \displaystyle\sup_{s\in[0,\tau]}s^{\ep_2}\left\| \zeta(s)-\mathcal{V}(s)\right\|_{\mathbb{B}_{1+\ep_2}}\\
&&+\left( {\bf \Theta}u^{\theta\gamma}\int_{0}^{u}(u-s)^{\alpha(\eta(\ep_2)-\theta)-1}{\mathcal V_{2}}(u)\mathcal{B}(1+q_1,1+v-\ep_2)s^{q_1-\ep_2}ds\right)\displaystyle\sup_{s\in[0,\tau]}s^{\ep_2}\left\| \zeta(s)-\mathcal{V}(s)\right\|_{\mathbb{B}_{1+\ep_2}}. 
\end{eqnarray*}

Making the following change $s=u\xi$, then $ds=ud\xi$, one has
\begin{eqnarray*}
&&u^{\theta\gamma}\left\| \int_{0}^{u}(u-s)^{\alpha-1}\mathcal{Q}^{\alpha}_{u,s}(\mathcal{A})\int_{0}^{s}\left[ g(s-r,\zeta(r))-g(s-r,\zeta(r))\right] drds\right\|_{\mathbb{B}_{1+\theta}}\\
&\leq& \left(\frac{2\mu^{\rho_2-1}C{\bf \Theta}u^{\theta\gamma+1+\alpha(\eta(\ep_2)-\theta)+v-\rho_2\ep_2}}{1+v-\rho_2\ep_2}  \int_{0}^{1}(1-\xi)^{\alpha(\eta(\ep_2)-\theta)-1}\xi^{1+v-\rho_2\ep_2}d\xi\right) \displaystyle\sup_{s\in[0,\tau]}s^{\ep_2}\left\| \zeta(s)-\mathcal{V}(s)\right\|_{\mathbb{B}_{1+\ep_2}}\\
&&+\left( {\bf \Theta}u^{\theta\gamma+\alpha(\eta(\ep_2)-\theta)+q_1-\ep_2}{\mathcal V_{2}}(u)\mathcal{B}(1+q_1,1+v-\ep_2)\int_{0}^{1}(1-\xi)^{\alpha(\eta(\ep_2)-\theta)-1}\xi^{q_1-\ep_2}ds\right)\displaystyle\sup_{s\in[0,\tau]}s^{\ep_2}\left\| \zeta(s)-\mathcal{V}(s)\right\|_{\mathbb{B}_{1+\ep_2}}\\
&=&\left(\frac{2\mu^{\rho_2-1}{\bf \Theta \Phi}u^{\theta\gamma+1+\alpha(\eta(\ep_2)-\theta)+v-\rho_2\ep_2}}{1+v-\rho_2\ep_2}\mathcal{B}_{\ep_2}^{\theta,\eta}(\alpha(\eta(\ep_2)-\theta),2+v-\rho_2\ep_2)\right) \displaystyle\sup_{s\in[0,\tau]}s^{\ep_2}\left\| \zeta(s)-\mathcal{V}(s)\right\|_{\mathbb{B}_{1+\ep_2}}\\
&&+\left( {\bf \Theta}u^{\theta\gamma+\alpha(\eta(\ep_2)-\theta)+q_1-\ep_2}{\mathcal V_{2}}(u)\mathcal{B}(1+q_1,1+v-\ep_2)\mathcal{B}_{\ep_2}^{\theta,\eta}(\alpha(\eta(\ep_2)-\theta),q_1-\ep_2+1)\right)\displaystyle\sup_{s\in[0,\tau]}s^{\ep_2}\left\| \zeta(s)-\mathcal{V}(s)\right\|_{\mathbb{B}_{1+\ep_2}}\\
&\leq& {\bf \Theta \Phi}\mathcal{B}_{\ep_2}^{\theta,\eta}\left( \frac{2\mu^{\rho_2-1}}{1+v-\rho_2\ep_2}u^{\theta\gamma+1+\alpha(\eta(\ep_2)-\theta)+v-\rho_2\ep_2}\right)\displaystyle\sup_{s\in[0,\tau]}s^{\ep_2}\left\| \zeta(s)-\mathcal{V}(s)\right\|_{\mathbb{B}_{1+\ep_2}}\\
&&+ {\bf \Theta \Phi} \mathcal{B}_{\ep_2}^{\theta,\eta}\left( {\mathcal V_{2}}(u)\mathcal{B}(1+q_1,1+v-\ep_2)u^{\theta\gamma+\alpha(\eta(\ep_2)-\theta)+q_1-\ep_2}\right) \displaystyle\sup_{s\in[0,\tau]}s^{\ep_2}\left\| \zeta(s)-\mathcal{V}(s)\right\|_{\mathbb{B}_{1+\ep_2}}\\
&=&{\bf \Theta \Phi}\mathcal{B}_{\ep_2}^{\theta,\eta}\left(\frac{2\mu^{\rho_2-1}}{1+v-\rho_2\ep_2}u^{\theta\gamma+1+\alpha(\eta(\ep_2)-\theta)+v-\rho_2\ep_2}+{\mathcal V_{2}}(u)\mathcal{B}(1+q_1,1+v-\ep_2)u^{\theta\gamma+\alpha(\eta(\ep_2)-\theta)+q_1-\ep_2}\right)\\
&&\times\displaystyle\sup_{s\in[0,\tau]}s^{\ep_2}\left\| \zeta(s)-\mathcal{V}(s)\right\|_{\mathbb{B}_{1+\ep_2}},
\end{eqnarray*}
therefore, we concluded the proof.	 
\end{proof}

\section{Main results}

\begin{theorem}\label{theo1} Let $f\in\mathcal{F}_1$ and $g\in\mathcal{F}_2$. If $v_0\in \mathbb{B}_1$ and $\min\{\tilde{\gamma}(\ep_1),\eta(\ep_2)\}>\ep:=\max\{\ep_1,\ep_2\}>0$, then there exist $r>0$ and $\tau_0>0$ such that for any $\zeta_0\in\mathcal{B}_{\mathbb{B}_1}(v_0,r)$ there exists an $\ep$-regular mild solutions of the {\rm Eq. (\ref{Eq1})} which is the unique one satisfying 
\begin{align*}
u^{\ep_i\gamma}\left\| \zeta(u,\zeta_{0})\right\|_{\mathbb{B}_{1+\ep_i}}\xrightarrow{u\rightarrow 0^+} 0,\quad i=1,2.
\end{align*} 

Furthermore, this solution verifies 
\begin{align*}
\zeta\in C((0,\tau_0),\mathbb{B}_{1+\theta}),\quad 0<\theta<min\{\tilde{\gamma}(\ep_1),\eta(\ep_2)\}
\end{align*}
and
\begin{align}\label{14}
u^{\theta \gamma}\left\| \zeta(u,\zeta_{0})\right\|_{\mathbb{B}_{1+\theta}}\xrightarrow{u\rightarrow 0^+} 0<\theta<\min\{\tilde{\gamma}(\ep_1),\eta(\ep_2)\}.
\end{align}

If $\zeta_0,w_0\in\mathcal{B}(v_0,r)$, then 
 \begin{align*}
 u^{\theta \gamma}\left\| \zeta(u,\zeta_{0})-\zeta(u,w_0)\right\|_{\mathbb{B}_{1+\theta}}\leq C\left\| \zeta_0-w_0\right\|_{\mathbb{B}_1},
 \end{align*}
 for all $u\in[0,\tau_0]$, $0\leq \theta<\theta_0<\min\{\tilde{\gamma}(\ep),\eta(\ep)\}$, $\gamma=\alpha+\beta(1-\alpha)$, $0<\alpha\leq 1$ and $0\leq\beta\leq 1$.
\end{theorem}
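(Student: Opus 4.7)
The plan is to apply the Banach contraction principle to the operator $\mathcal{T}$ defined by the right-hand side of the mild solution formula \eqref{solutionmild}, set in an appropriate complete metric space of weighted continuous functions, and then bootstrap regularity and continuous dependence from the same estimates. Let $\ep:=\max\{\ep_1,\ep_2\}$. For $\tau,\mu>0$, I would consider the set $K(\tau,\mu)$ of $\zeta\in C((0,\tau],\mathbb{B}_{1+\ep})$ with $\sup_{u\in(0,\tau]}u^{\ep_i\gamma}\|\zeta(u)\|_{\mathbb{B}_{1+\ep_i}}\le\mu$ for $i=1,2$ and $u^{\ep_i\gamma}\|\zeta(u)\|_{\mathbb{B}_{1+\ep_i}}\to 0$ as $u\to 0^+$, endowed with the weighted metric $d(\zeta,\mathcal{V})=\max_{i}\sup_{u\in(0,\tau]}u^{\ep_i\gamma}\|\zeta(u)-\mathcal{V}(u)\|_{\mathbb{B}_{1+\ep_i}}$; completeness of $(K(\tau,\mu),d)$ is routine.

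First I would show $\mathcal{T}(K(\tau,\mu))\subset K(\tau,\mu)$ for $r,\tau$ small and $\mu$ suitably chosen. For the linear term $\mitt_{\alpha,\beta}(-u^{\alpha}\mathcal{A})\zeta_0$, applying Theorem \ref{th2.47} and Remark \ref{remark1} at $\tilde{\beta}=1$ yields $u^{\ep_i\gamma}\|\mitt_{\alpha,\beta}(-u^{\alpha}\mathcal{A})\zeta_0\|_{\mathbb{B}_{1+\ep_i}}\le \mathbf{\Theta}\|\zeta_0\|_{\mathbb{B}_1}$, controlled by $\|v_0\|_{\mathbb{B}_1}+r$. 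The two integral terms are handled by Lemma \ref{lemma2} and Lemma \ref{lemma4} at $\theta=\ep_i$; the hypothesis $\min\{\tilde{\gamma}(\ep_1),\eta(\ep_2)\}>\ep$ together with the inequality $\rho\ep\le\tilde{\gamma}(\ep)$ from Definition \ref{defi1} guarantees that every exponent of $u$ appearing in the upper bounds is strictly positive, so these contributions are $O(\tau^\sigma)$ with $\sigma>0$ and vanish as $u\to 0^+$. Choosing $\mu$ slightly larger than $\mathbf{\Theta}(\|v_0\|_{\mathbb{B}_1}+r)$ and $\tau_0$ sufficiently small therefore enforces self-mapping and the required limit condition.

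Next, Lemma \ref{lemma3} and Lemma \ref{lemma5} give $d(\mathcal{T}\zeta,\mathcal{T}\mathcal{V})\le L(\tau)d(\zeta,\mathcal{V})$ with $L(\tau)=\sup_{u\in(0,\tau]}\max_i(\Gamma_{\ep_i,\gamma}(u)+\Gamma^2_{\ep_i,\gamma}(u))$; by the same exponent analysis $L(\tau)\to 0$ as $\tau\to 0^+$, so after shrinking $\tau_0$ one has $L(\tau_0)<1$ and the contraction principle produces the unique fixed point $\zeta(\cdot,\zeta_0)\in K(\tau_0,\mu)$, which is the desired $\ep$-regular mild solution. For the higher regularity statement, I would reinsert this fixed point into the right-hand side of \eqref{solutionmild} and apply Lemma \ref{lemma2} and Lemma \ref{lemma4} with arbitrary $\theta\in[0,\min\{\tilde{\gamma}(\ep_1),\eta(\ep_2)\})$: the same estimates give both $\zeta(u)\in\mathbb{B}_{1+\theta}$ and the decay $u^{\theta\gamma}\|\zeta(u)\|_{\mathbb{B}_{1+\theta}}\to 0$ in \eqref{14}, while continuity in $u$ with values in $\mathbb{B}_{1+\theta}$ follows from strong continuity of $\mitt_{\alpha,\beta}$, Theorem \ref{th2.47}, and dominated convergence (using Lemma 2.5 of the paper for convergence of the $\mitt_{\alpha,\alpha}$ integrand).

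For continuous dependence, subtracting the mild formulas for $\zeta(\cdot,\zeta_0)$ and $\zeta(\cdot,w_0)$, using the linear estimate of Remark \ref{remark1} on the semigroup term and the Lipschitz estimates of Lemmas \ref{lemma3} and \ref{lemma5} on the nonlinear parts, I would obtain an inequality of the form $u^{\theta\gamma}\|\zeta(u,\zeta_0)-\zeta(u,w_0)\|_{\mathbb{B}_{1+\theta}}\le C\|\zeta_0-w_0\|_{\mathbb{B}_1}+\int_0^u\psi(u,s)s^{-\ep}\sup_{r\le s}r^{\ep}\|\zeta(r,\zeta_0)-\zeta(r,w_0)\|_{\mathbb{B}_{1+\ep}}\,ds$ with an integrable kernel $\psi$ of fractional type; applying the $\psi$-Hilfer Gronwall inequality of Theorem \ref{nov} and its Mittag-Leffler corollary then closes the estimate and yields the stated bound. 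The main obstacle I anticipate is purely bookkeeping: verifying in each of the four integral estimates that every exponent of $u$ (in particular $\theta\gamma+\alpha(\tilde{\gamma}(\ep_i)-\theta)-\rho_i\ep_i$ and the analogous quantities with $\eta(\ep_2)$ and $v$) remains strictly positive on the full range $0\le\theta<\min\{\tilde{\gamma}(\ep_1),\eta(\ep_2)\}$, and orchestrating the simultaneous smallness of $\tau$, $\mu$, and $r$ so that the self-mapping, contraction, and Gronwall steps all hold with the same constants.
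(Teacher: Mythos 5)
Your proposal follows essentially the same route as the paper: the same weighted space of functions with finite $\max_i\sup_u u^{\ep_i\gamma}\|\cdot\|_{\mathbb{B}_{1+\ep_i}}$, self-mapping via Lemma \ref{lemma2} and Lemma \ref{lemma4}, contraction via Lemma \ref{lemma3} and Lemma \ref{lemma5}, Banach fixed point, and a bootstrap of the fixed point through the same estimates to get $\zeta\in C((0,\tau_0),\mathbb{B}_{1+\theta})$ and the decay \eqref{14}. The only genuinely different choices are cosmetic or local: you build the vanishing condition into the space $K(\tau,\mu)$ rather than deriving it afterwards, and for continuous dependence you invoke the fractional Gronwall inequality (Theorem \ref{nov}), whereas the paper simply absorbs the difference using the already-established bound $\Gamma_{\ep_i,\gamma}(u)+\Gamma^2_{\ep_i,\gamma}(u)\le \tfrac12$, gets the estimate at $\theta=\ep_i$, and then upgrades to general $\theta$; note that Lemmas \ref{lemma3} and \ref{lemma5} bound the difference by a \emph{global} weighted sup over $[0,\tau]$, so the integral inequality with $\sup_{r\le s}$ inside that your Gronwall step requires is not what these lemmas directly provide — the paper's absorption argument is the more immediate closure.

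Two concrete gaps remain. First, the theorem asserts uniqueness among \emph{all} $\ep$-regular mild solutions satisfying $u^{\ep_i\gamma}\|\phi(u)\|_{\mathbb{B}_{1+\ep_i}}\to 0$, not merely among functions with weighted norm at most $\mu$; your fixed-point argument only gives uniqueness inside $K(\tau_0,\mu)$. The paper needs its STEP 2 (a competitor satisfies the $\mu$-bound on some smaller interval $[0,\tilde\tau]$, hence coincides there) together with Lemma \ref{lemma6}, a Gronwall-based propagation of coincidence from $[0,\tilde\tau]$ to the full interval; this step is absent from your plan. Second, your handling of the linear term is off: choosing $\mu$ of the order $\mathbf{\Theta}(\|v_0\|_{\mathbb{B}_1}+r)$ is incompatible with the smallness of $\mu$ that the nonlinear estimates demand (the factors $\mu^{\rho_i-1}$ with $\rho_i>1$ must be small, cf. \eqref{19}), and the uniform bound $u^{\ep_i\gamma}\|\mitt_{\alpha,\beta}(-u^{\alpha}\mathcal{A})\zeta_0\|_{\mathbb{B}_{1+\ep_i}}\le \mathbf{\Theta}\|\zeta_0\|_{\mathbb{B}_1}$ does not by itself yield the required decay of this term as $u\to 0^+$ in the borderline case $\gamma=\alpha$ (e.g. $\beta=0$). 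The paper's order of choices is the workable one: fix $\mu$ small via \eqref{19}, use the vanishing of $u^{\ep_i\gamma}\|\mitt_{\alpha,\beta}(-u^{\alpha}\mathcal{A})v_0\|_{\mathbb{B}_{1+\ep_i}}$ for the fixed center $v_0$ to pick $\tau_0$ with this quantity below $\mu/2$, and control the increment $\zeta_0-v_0$ by taking $r=\mu/(4\mathbf{\Theta})$. With those two repairs your argument matches the paper's proof.
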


The {\bf Theorem \ref{theo1}} can be regarded in the following way:

\begin{corollary} Let $f,g$ be $\ep_1$-regular and $\ep_2$-regular maps relative to the pair $(\mathbb{B}_1,\mathbb{B}_{0})$ with $\tilde{\gamma}(\ep_1)$ and $\eta(\ep_2)$, respectively. If $v_0\in \mathbb{B}_1$ and $\min\{\tilde{\gamma}(\ep_1),\eta(\ep_2)\}>\ep:=\max\{\ep_1,\ep_2\}>0$, then there exists $r>0$ and $\tau_0>0$ such that for any $\zeta_0\in\mathcal{B}_{\mathbb{B}_1}(v_0,r)$. There exists an $\ep$-regular mild solutions $\zeta\in C([0,\tau_0],\mathbb{B}_1)\cap C((0,\tau_0],\mathbb{B}_{1+\ep})$ for problem
\begin{equation}\label{15-16}
 \left\{ 
 \begin{array}{cll}
 ^{H}\der_{0+}^{\alpha,\beta}\zeta(u) & = & \mathcal{A}\zeta(u)+\displaystyle\int_{0}^{u}g(\zeta(s))ds+f(\zeta(u)),\quad u>0 \\ 
 \inte_{0+}^{1-\gamma}\zeta(0) & = & \zeta_0,
	 \end{array}
	 \right.
	 \end{equation}
	 which is the unique one satisfying 
 \begin{align*}
	 u^{\ep_i\gamma}\left\| \zeta(u,\zeta_{0})\right\|_{\mathbb{B}_{1+\ep_i}}\xrightarrow{u\rightarrow 0^+} 0,\quad i=1,2.
	 \end{align*}
	 
Furthermore, this solution verifies 
	 \begin{align*}
	 \zeta\in C((0,\tau_0),\mathbb{B}_{1+\theta}),\quad 0<\theta<min\{\tilde{\gamma}(\ep_1),\eta(\ep_2)\}
	 \end{align*}
	 and
	 \begin{align*}
	 u^{\theta \gamma}\left\| \zeta(u,\zeta_{0})\right\|_{\mathbb{B}_{1+\theta}}\xrightarrow{u\rightarrow 0+} 0<\theta<\min\{\tilde{\gamma}(\ep_1),\eta(\ep_2)\}.
	 \end{align*}

Moreover, if $\zeta_0,w_0\in\mathcal{B}(v_0,r)$, then 
	\begin{align*}
	u^{\theta\gamma}\left\| \zeta(u,\zeta_{0})-\zeta(u,w_0)\right\| _{\mathbb{B}_{1+\theta}}\leq C \left\| \zeta_0-w_0\right\|_{\mathbb{B}_1},
	\end{align*}
	for all $u\in[0,\tau_0]$, $0\leq \theta < \theta_0<\min\{\tilde{\gamma}(\ep_1),\eta(\ep_2)\}$.
\end{corollary}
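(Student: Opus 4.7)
The plan is to derive this Corollary as an immediate specialization of \textbf{Theorem \ref{theo1}} to the autonomous setting. First, I would reinterpret the time-independent nonlinearities $f,g$ as maps $\tilde f(u,x):=f(x)$ and $\tilde g(u,x):=g(x)$, and verify that they belong to the classes $\mathcal{F}_1$ and $\mathcal{F}_2$ introduced before \textbf{Theorem \ref{theo1}}. For this identification I would take $c(u)\equiv C$ on the bounded interval $(0,\tau_0]$ (so that $v=0$ is admissible), set the exponents $q_1=q_1^{*}=l=l^{*}=0$, and choose any non-decreasing function $\mathcal{V}_i$ bounded by a prescribed $\delta$ with $\lim_{u\to 0^+}\mathcal{V}_i(u)=0$.

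The next step is to check the structural estimates defining $\mathcal{F}_1$ and $\mathcal{F}_2$ starting from \textbf{Definition \ref{defi1}}. The Hölder-type inequality is a verbatim transcription of $\varepsilon_i$-regularity, with the additional $\mathcal{V}_i(u)u^{q_1}$ term dominating the constant $1$ appearing in \textbf{Definition \ref{defi1}} (up to enlarging $\mathcal{V}_i$). The corresponding growth estimate then follows from the Lipschitz bound by fixing $y=0$ and absorbing $\|f(0)\|_{\mathbb{B}_{\tilde\gamma(\varepsilon_1)}}$ into the $\mathcal{V}_i$ contribution; an analogous computation is carried out for $g$.

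Once $\tilde f\in \mathcal{F}_1$ and $\tilde g\in \mathcal{F}_2$ are secured, I would invoke \textbf{Theorem \ref{theo1}} directly: it produces existence of an $\varepsilon$-regular mild solution on $[0,\tau_0]$, uniqueness in the class $\{u^{\varepsilon_i\gamma}\|\zeta(u,\zeta_0)\|_{\mathbb{B}_{1+\varepsilon_i}}\xrightarrow{u\to 0^+}0\}$, higher regularity $\zeta\in C((0,\tau_0),\mathbb{B}_{1+\theta})$ with the prescribed vanishing rate, and Lipschitz continuous dependence on the initial datum. The continuous-dependence step itself rests on the Gronwall-type inequality \textbf{Theorem \ref{nov}} applied to the weighted quantity $u^{\theta\gamma}\|\zeta(u,\zeta_0)-\zeta(u,w_0)\|_{\mathbb{B}_{1+\theta}}$, after using \textbf{Lemmas \ref{lemma3}} and \textbf{\ref{lemma5}} to control the differences of the nonlinear convolutions by $\Gamma_{\theta,\gamma}$-type weights.

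The main (mild) obstacle will be verifying that the auxiliary function $\mathcal{V}_i$ can be chosen to simultaneously satisfy the smallness constraint $0\le \mathcal{V}_i(u)\le \delta$ required for the contraction argument underlying \textbf{Theorem \ref{theo1}}; this is handled by shrinking both $\tau_0$ and the radius $r$ of the ball $\mathcal{B}_{\mathbb{B}_1}(v_0,r)$ if necessary, so that each $\Gamma$-factor lies strictly below $1$. No essentially new estimate is needed beyond those already encoded in \textbf{Lemmas \ref{lemma2}--\ref{lemma5}}, so the conclusion follows without further analytical input.
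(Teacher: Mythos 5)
Your overall strategy is exactly how the paper treats this statement: the corollary is presented as Theorem \ref{theo1} ``regarded in the following way,'' with no separate argument, so reducing it to Theorem \ref{theo1} by checking that the autonomous maps $\tilde f(u,x)=f(x)$, $\tilde g(u,x)=g(x)$ belong to $\mathcal{F}_1$ and $\mathcal{F}_2$ is the intended route (and you are, if anything, more explicit than the paper). However, your verification of the class membership contains a genuine flaw. With your choice $q_1=q_1^{*}=l=l^{*}=0$, condition (\ref{Eq11}) reads $\|f(x)-f(y)\|_{\mathbb{B}_{\tilde\gamma(\varepsilon_1)}}\leq c(u)\|x-y\|_{\mathbb{B}_{1+\varepsilon_1}}\bigl(\|x\|^{\rho_1-1}_{\mathbb{B}_{1+\varepsilon_1}}+\|y\|^{\rho_1-1}_{\mathbb{B}_{1+\varepsilon_1}}+\mathcal{V}_1(u)\bigr)$, and Definition \ref{defi1} only provides the constant $1$ in place of $\mathcal{V}_1(u)$. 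You propose to dominate that constant ``up to enlarging $\mathcal{V}_i$,'' but enlargement is not available: the framework requires $\mathcal{V}_i(u)\to 0$ as $u\to 0^{+}$ and $\mathcal{V}_i\leq\delta$, and in the contraction argument of Theorem \ref{theo1} the bounds (\ref{19})--(\ref{20}) force $\mathcal{V}_i(u)<\delta_i$ with $\delta_i$ small; a function tending to zero and staying below $\delta_i$ can never majorize $1$ near $u=0$ (taking $x=y$ small shows the inequality genuinely fails). The same problem recurs in your treatment of (\ref{Eq12}), where $\|f(0)\|_{\mathbb{B}_{\tilde\gamma(\varepsilon_1)}}$ would have to be absorbed into a vanishing term.

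The step is repairable, but only by choosing the exponents strictly negative, which the class permits since $\tilde\gamma(\varepsilon_1)>\rho_1\varepsilon_1>\varepsilon_1$ (and likewise $\eta(\varepsilon_2)>\varepsilon_2$): take, e.g., $q_1=q_1^{*}=\varepsilon_1-\tilde\gamma(\varepsilon_1)<0$ and $\mathcal{V}_1(u)=\max\{1,\|f(0)\|_{\mathbb{B}_{\tilde\gamma(\varepsilon_1)}}\}\,u^{\tilde\gamma(\varepsilon_1)-\varepsilon_1}$, so that $\mathcal{V}_1(u)u^{q_1}$ is a constant dominating the ``$+1$'' (and $\|f(0)\|$), while $\mathcal{V}_1$ is non-decreasing, vanishes at $0^{+}$, and satisfies $\mathcal{V}_1<\delta_1$ after shrinking $\tau_0$; the analogous choice with $l,l^{*}<0$ handles $g$. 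With that correction the application of Theorem \ref{theo1} goes through as you describe. Two minor inaccuracies: the continuous dependence in the paper's proof of Theorem \ref{theo1} comes from the contraction estimates of Lemma \ref{lemma3} and Lemma \ref{lemma5} (Step 3), not from the Gronwall inequality of Theorem \ref{nov}, which is used in Lemma \ref{lemma6} for uniqueness beyond the class; and the smallness constraint is on $\mu$, $\delta_1$, $\delta_2$ and $\tau_0$ via (\ref{19})--(\ref{20}), not on the radius $r$ beyond its fixed relation $r=\mu/(4{\bf\Theta})$.
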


\begin{proof} ({\bf Theorem} \ref{theo1}) In proving this result, we aim to investigate: existence, uniqueness and continuous dependence on initial data of $\ep$-regular mild solutions. In {\bf STEP 1}, first, let's prove the convergence of Eq. (\ref{14}). In this sense, the existence of $\ep$-regular mild solutions will also be investigated.
{\bf STEP 1:} \label{step1} Then, first let $\tilde{\mathcal{B}}=\max\{\mathcal{B}_{\ep_2}^{\ep_2,\eta},\mathcal{B}_{\ep_1}^{\ep_1,\eta},\mathcal{B}_{\ep_1}^{\ep_{2},\tilde{\gamma}},\mathcal{B}_{\ep_1}^{\ep_{1},\tilde{\gamma}},\mathcal{B}(1+l,1+v-\ep_2),\mathcal{B}(1+l^{*},1+v)\}$,
and take $0<\mu\leq 1$, such that
\begin{align}\label{19}
{\bf \Theta \Phi}\tilde{\mathcal{B}}\left( \frac{\mu^{\rho_2-1}}{1+v-\rho_2\ep_2}+\mu^{\rho_2-1}\right) \leq\frac{\mu}{8}.
\end{align}

Also, consider $r=\dfrac{\mu}{4{\bf \Theta}}$ and choose $\tau_0\in(0,1]$ such that, for $v_0$ fixed and $(0,\tau_0]$,
\begin{align*}
u^{\ep_i\gamma}\left\| \mitt_{\alpha,\alpha}(-u^{\alpha}\mathcal{A})\right\|_{\mathbb{B}_{1+\ep_i}}\leq \frac{\mu}{2},
\end{align*}
and
\begin{align*}
r_i(u)<\delta_i,\quad i=1,2,
\end{align*}
with $\delta_1$ and $\delta_2$ satisfying 
\begin{align}\label{20}
{\bf \Theta \Phi}\tilde{\mathcal{B}}\left(\delta_1+\delta_2\mathcal{B}\right) \leq\frac{\mu}{8}.
\end{align}

Consider the following complete metric space
\begin{align*}
k(\tau_0)=\left\lbrace \zeta\in C((0,\tau_0],\mathbb{B}_{1+\ep});\max_{i=1,2}\left\lbrace \displaystyle\sup_{(0,\tau_0]}u^{\ep_i\gamma}\left\| \zeta(u)\right\|_{\mathbb{B}_{1+\ep_i}} \right\rbrace \leq \mu\right\rbrace
\end{align*}
with the norm
\begin{align*}
\left\| \zeta\right\|_{k(\tau_0)}=\max_{i=1,2}\left\lbrace \displaystyle\sup_{(0,\tau_0]}u^{\ep_i\gamma}\left\| \zeta(u)\right\|_{\mathbb{B}_{1+\ep_i}}\right\rbrace.
\end{align*}

Also define the operator
\begin{eqnarray*}
\Lambda \zeta(u)=\mitt_{\alpha,\beta}(-u^{\alpha}\mathcal{A})\zeta_0+\int_{0}^{u}(u-s)^{\alpha-1}\mathcal{Q}^{\alpha}_{u,s}(\mathcal{A})\int_{0}^{s}g(s-r,\zeta(r))drds+\int_{0}^{u}(u-s)^{\alpha-1}\mathcal{Q}^{\alpha}_{u,s}(\mathcal{A})f(s,\zeta(s))ds.
\end{eqnarray*}

{\bf Affirmation 1:} $\Lambda$ is well defined in $k(\tau_0)$.

 If $\zeta\in k(\tau_0)$, $0\leq \theta <\min\{\tilde{\gamma}(\ep),\eta(\ep)\}$ and $0<u_2<u_1\leq \tau_0$, follows 
\begin{eqnarray*}
&&\left\| \Lambda \zeta(u_1)-\Lambda \zeta(u_2)\right\|_{\mathbb{B}_{1+\theta}}\notag\\
&=&\left\| \mitt_{\alpha,\beta}(-u_{1}^{\alpha}\mathcal{A})-\mitt_{\alpha,\beta}(-u_{2}^{\alpha}\mathcal{A})+\int_{0}^{u_1}(u_1-s)^{\alpha-1}\mathcal{Q}^{\alpha}_{u_{1},s}(\mathcal{A})\int_{0}^{s}g(s-r,\zeta(r))drds\right.\notag\\
&&+\int_{0}^{u_1}(u_1-s)^{\alpha-1}\mathcal{Q}^{\alpha}_{u_{1},s}(\mathcal{A})f(s,\zeta(s))ds-\int_{0}^{u_1}(u_1-s)^{\alpha-1}\mathcal{Q}^{\alpha}_{u_{2},s}(\mathcal{A})\int_{0}^{s}g(s-r,\zeta(r))drds\notag\\
&&-\left. \int_{0}^{u_2}(u_2-s)^{\alpha-1}\mathcal{Q}^{\alpha}_{u_{2},s}(\mathcal{A})f(s,\zeta(s))ds \right\|_{\mathbb{B}_{1+\theta}}\notag\\
&\leq& \left\| \left( \mitt_{\alpha,\beta}(-u_1^{\alpha}\mathcal{A})-\mitt_{\alpha,\beta}(-u_2^{\alpha}\mathcal{A})\right) \zeta_0\right\| _{\mathbb{B}_{1+\theta}}\notag\\
&&+\underbrace{\left\| \int_{0}^{u_2} \left\lbrace (u_1-s)^{\alpha-1}\mathcal{Q}^{\alpha}_{u_{1},s}(\mathcal{A})-(u_2-s)^{\alpha-1}\mathcal{Q}^{\alpha}_{u_{2},s}(\mathcal{A})\right\rbrace f(s,\zeta(s))ds\right\|_{\mathbb{B}_{1+\theta}}}_{{\rm(I)}}\notag\\
&&+\underbrace{\left\| \int_{u_1}^{u^2}(u_2-s)^{\alpha-1}\mathcal{Q}^{\alpha}_{u_{1},s}(\mathcal{A})f(s,\zeta(s))ds\right\|_{\mathbb{B}_{1+\theta}}}_{{\rm(II)}}\notag\\
&&+\underbrace{\left\| \int_{0}^{u_2}\left\lbrace (u_1-s)^{\alpha-1}\mathcal{Q}^{\alpha}_{u_{1},s}(\mathcal{A})-(u_2-s)^{\alpha-1}\mathcal{Q}^{\alpha}_{u_{2},s}(\mathcal{A})\right\rbrace \int_{0}^{s}g(s-r,\zeta(r))drds\right\|_{\mathbb{B}_{1+\theta}}}_{{\rm(III)}}\notag\\
&&+\underbrace{\left\| \int_{u_2}^{u_1}(u_1-s)^{\alpha-1}\mathcal{Q}^{\alpha}_{u_{1},s}(\mathcal{A})\int_{0}^{s}g(s-r,\zeta(r))drds\right\|_{\mathbb{B}_{1+\theta}}}_{{\rm(IV)}}.
\end{eqnarray*}

Since $u_2>0$, using the {\bf Theorem} \ref{th2.47}, one has
\begin{eqnarray*}
\left\| \mitt_{\alpha,\beta}(-u_1^{\alpha}\mathcal{A})-\mitt_{\alpha,\beta}(-u_2^{\alpha}\mathcal{A})\zeta_0\right\|_{\mathbb{B}_{1+\theta}}\xrightarrow{u_1\rightarrow u_2} 0.
\end{eqnarray*}

Now, we evaluate (I)-(IV). Note that, the strong continuity of the semigroup in $\mathbb{B}_{1+\theta}$ together with {\bf Lemma} \ref{lemma2} and {\bf Lemma} \ref{lemma4}, follows that the (I) and (III) go to zero as $u_{1}\rightarrow u_{2}$. Now, we want evaluate (II). For this, using the {\bf Lemma} \ref{lemma2}, one has
\begin{eqnarray*}
    \left\| \int_{u_2}^{u_1}(u_1-s)^{\alpha-1}\mathcal{Q}^{\alpha}_{u,s}(\mathcal{A})f(s,\zeta(s))ds\right\|_{\mathbb{B}_{1+\theta}}&\leq& {\bf \Theta \Phi}u_1 u^{\theta\gamma+\alpha(\tilde{\gamma}(\ep_1)-\theta)-\rho_1\ep_1}\mu^{\rho_1}\int_{u_2/u_1}^{1}(1-s)^{\alpha(\tilde{\gamma}(\ep_1)-\theta)-1}s^{-\rho_1\ep_1}ds\notag\\&+&{\bf \Theta \Phi}\delta_1 u^{\theta \gamma+\alpha(\tilde{\gamma}(\ep_1)-\theta)+q_1^{*}}\int_{u_2/u_1}^{1}(1-s)^{\alpha(\tilde{\gamma}(\ep_1)-\theta)-1}s^{q_1^{*}}ds\rightarrow 0
\end{eqnarray*}
as $u_1\rightarrow u_2^{*}$. So, the two term does so (IV). Indeed, using {\bf Lemma} \ref{lemma4}, follows that
\begin{eqnarray*}
	&&\left\| \int_{u_1}^{u_2}(u_1-s)^{\alpha-1}\mathcal{Q}^{\alpha}_{u_{1},s}(\mathcal{A})\int_{0}^{s}g(s-r,\zeta(s))dsdr\right\|_{\mathbb{B}_{1+\theta}}\\
	&\leq& \frac{{\bf \Theta \Phi}u^{\theta\gamma+1-\alpha(\eta(\ep_2)-\theta)}}{1+v-\rho_2\ep_2}\mu^{\rho_2}\int_{u_2/u_1}^{u}(1-s)^{\alpha(\eta(\ep_2)-\theta)-1}s^{1+v-\rho_2\ep_2}ds\\
	&+&{\bf \Theta \Phi}u^{\theta\gamma-\alpha(\eta(\ep_2)-\theta)+1}\delta_2\mathcal{B}(1+q_1^{*},1+v)\int_{u_2/u_1}^{u}(u-s)^{\alpha(\eta(\ep_2)-\theta)-1}s^{\alpha+v+q_1^*+1}ds,
\end{eqnarray*}
converges to zero as $u_1\rightarrow u_2^{+}$. If $\mathcal{B}$ similar when $u_1<u_2$.
	
Using the {\bf Lemma} \ref{lemma2} and {\bf Lemma} \ref{lemma4}, yields
\begin{align*}
	\left\| \int_{0}^{u_2}\left\lbrace (u_1-s)^{\alpha-1}\mathcal{Q}^{\alpha}_{u_{1},s}(\mathcal{A})-(u_2-s)^{\alpha-1}\mathcal{Q}^{\alpha}_{u_{1},s}(\mathcal{A})\right\rbrace \int_{0}^{s}g(s-r,\zeta(r))drds\right\|_{\mathbb{B}_{1+\theta}}\rightarrow 0,
\end{align*}
and
\begin{align*}
	\left\| \int_{0}^{u_2}\left\lbrace (u_1-s)^{\alpha-1}\mathcal{Q}^{\alpha}_{u_{1},s}(\mathcal{A})-(u_2-s)^{\alpha-1}\mathcal{Q}^{\alpha}_{u_{1},s}(\mathcal{A})\right\rbrace f(s,\zeta(s))\right\|_{\mathbb{B}_{1+\theta}}\rightarrow 0,
\end{align*}
as $u_1\rightarrow u_2$.

In order to show that $\Lambda u$ belongs to $k(\tau_0)$, we must show that $\displaystyle\max_{i=1,2}\left\lbrace u^{\ep_i\gamma}\left\| \Lambda \zeta(u)\right\|_{1+\ep_i}\right\rbrace \leq \mu $ for all $u\in(0,\tau_0]$.

So, we use {\bf Lemma} \ref{lemma2}, {\bf Lemma} \ref{lemma4}, Eq. (\ref{19}) and Eq. (\ref{20}), follows
\begin{eqnarray*}
&&u^{\theta\gamma}\left\| \Lambda \zeta(u)\right\|_{\mathbb{B}_{1+\theta}}\notag\\
&\leq& u^{\theta\gamma}\left\| \mitt_{\alpha,\beta}(-u^{\alpha}\mathcal{A})\zeta_0\right\|_{\mathbb{B}_{1+\theta}}\notag\\
&&+u^{\theta\gamma}\left\| \int_{0}^{u}(u-s)^{\alpha-1}\mathcal{Q}^{\alpha}_{u,s}(\mathcal{A})\int_{0}^{s}g(s-r,\zeta(r))drds\right\|_{\mathbb{B}_{1+\theta}}\notag\\
&&+u^{\theta\gamma}\left\| \int_{0}^{u}(u-s)^{\alpha-1}\mathcal{Q}^{\alpha}_{u,s}(\mathcal{A})f(s,\zeta(s))ds\right\|_{\mathbb{B}_{1+\theta}}\notag\\
&\leq&{\bf \Theta} r+u^{\theta\gamma}\left\| \mitt_{\alpha,\beta}(-u^{\alpha}\mathcal{A})\zeta_0\right\|_{\mathbb{B}_{1+\theta}}+{\bf \Theta \Phi}\mathcal{B}_{\ep_1}^{\theta,\tilde{\gamma}}\left( \mu^{\rho_1}u^{\theta\gamma+\alpha(\tilde{\gamma}(\ep_1)-\theta)-\rho_1\ep_1}+\delta_1 u^{\theta\gamma+\alpha(\tilde{\gamma}(\ep_1)-\theta)+q_1^{*}}\right)\notag\\
&&+{\bf \Theta \Phi}\mathcal{B}_{\ep_2}^{\theta,\eta}\left( \frac{\mu^{\rho_2}u^{\theta\gamma+1+v-\rho_2\ep_2}}{1+v-\rho_2\ep_2}+\delta_2 \mathcal{B}(1+q_1^*,1+v)u^{\theta\gamma+\alpha+v+q_1^*+1}\right)\notag\\
&\leq& {\bf \Theta \Phi}\tilde{\mathcal{B}}\left( \mu^{\rho_1}u^{\theta\gamma+\alpha(\tilde{\gamma}(\ep_1)-\theta)-\rho_1\ep_1}+\frac{\mu^{\rho_2}u^{\theta\gamma+1+v-\rho_2\ep_2}}{1+v-\rho_2\ep_2}\right)+{\bf \Theta \Phi}\tilde{\mathcal{B}}\left( \delta_1 u^{\theta\gamma+\alpha(\tilde{\gamma}(\ep_1)-\theta)+q_1^*}\right. \notag\\
&&\left.+\delta_2\mathcal{B}(1+q_1^*,1+v)u^{\theta\gamma+\alpha+v+q_1^*+1}\right)+Mr+u^{\theta\gamma}\left\| \mitt_{\alpha,\beta}(-u^{\alpha}\mathcal{A})v_0\right\|_{\mathbb{B}_{1+\theta}}\notag\\
&\leq& {\bf \Theta \Phi}\tilde{\mathcal{B}}\left( \mu^{\rho_1}+\frac{\mu^{\rho_2}}{1+v-\rho_2\ep_2}\right)+{\bf \Theta \Phi}\tilde{\mathcal{B}}(\delta_1+\delta_2\mathcal{B}(1+q_1^*,1+v))+u^{\theta\gamma}\left\| \mitt_{\alpha,\beta}(-u^{\alpha}\mathcal{A})v_0\right\|_{1+\theta}\notag\\
&\leq& u^{\theta\gamma}\left\| \mitt_{\alpha,\beta}(-u^{\alpha}\mathcal{A})v_0\right\|_{1+\theta} +\frac{\mu}{8}+\frac{\mu}{8}=u^{\theta\gamma}\left\| \mitt_{\alpha,\beta}(-u^{\alpha}\mathcal{A})v_0\right\|_{\mathbb{B}_{1+\theta}}+\frac{\mu}{2}.
\end{eqnarray*}

Then, we have 
\begin{align*}
u^{\ep_i\gamma}\left\| \Lambda \zeta(u)\right\|_{\mathbb{B}_{1+\ep_i}}&\leq u^{\ep_i\gamma}\left\| \mitt_{\alpha,\beta}(-u^\alpha \mathcal{A})v_0\right\|_{X_{1+\theta}}+\frac{\mu}{2}\leq\frac{\mu}{2}+\frac{\mu}{2}=\mu, \quad for \quad i=1,2.
\end{align*}

Therefore, $\Lambda \zeta$ belongs to $k(\tau_0)$. So, {\bf Affirmation 1} is verified. Note that
\begin{align}\label{21}
\Gamma_{\ep_i,\gamma}^2
\leq {\bf \Theta \Phi}\tilde{\mathcal{B}}\left( \frac{2\mu^{\rho_2-1}}{1+v-\rho_2\ep_2}+\delta_2\mathcal{B}\right),
\end{align}
and
\begin{align}\label{22}
\Gamma_{\ep_i,\gamma}(u)\leq {\bf \Theta \Phi}\tilde{\mathcal{B}}\left( 2\mu^{\rho_2-1}+\delta_1\right).
\end{align}

Using the {\bf Lemma} \ref{lemma3} and {\bf Lemma} \ref{lemma5}, yields
\begin{align*}
u^{\ep_i\gamma}\left\| \Lambda \zeta(u)-\Lambda {\mathcal V(u)}\right\|_{\mathbb{B}_{1+\ep_i}}\leq \Gamma_{\ep_i,\gamma}^{2}\displaystyle\sup_{s\in[0,\tau_0]}s^{\ep_2}\left\| \zeta(s)-\mathcal{V}(s)\right\|_{\mathbb{B}_{1+\ep_2}}+\Gamma_{\ep_i,\gamma}(u)\displaystyle\sup_{s\in[0,\tau_0]}s^{\ep_1}\left\| \zeta(s)-\mathcal{V}(s)\right\|_{\mathbb{B}_{1+\ep_1}}.
\end{align*}

In this sense, we have
\begin{align*}
u^{\ep_i\gamma}\left\| \Lambda \zeta(u)-\Lambda {\mathcal V(u)}\right\|_{\mathbb{B}_{1+\ep_i}}\leq \left( \Gamma_{\ep_i,\gamma}^2(u)+\Gamma_{\ep_i,\gamma}(u)\right) \left\| \zeta(s)-\mathcal{V}(s)\right\|_{k(\tau_0)}.
\end{align*}

On the other hand, using the Eq. (\ref{19}), Eq. (\ref{20}), Eq. (\ref{21}) and Eq. (\ref{22}), yields
\begin{align*}
\Gamma_{\ep_i,\gamma}^{2}(u)+\Gamma_{\ep_i,\gamma}\leq 2{\bf \Theta \Phi}\tilde{\mathcal{B}}\left(\frac{ \mu^{\rho_2-1}}{1+v-\rho_2\ep_2}+\mu^{\rho_1-1}\right) + {\bf \Theta \Phi}\tilde{\mathcal{B}}(\delta_1+\mathcal{B} \delta_2)\leq \frac{1}{4}+\frac{\mu}{8}<\frac{1}{2}.
\end{align*}

Therefore,
\begin{align*}
\left\| \Lambda \zeta-\Lambda v\right\|_{k(\tau_0)}<\frac{1}{2}\left\| \zeta-v\right\|_{k(\tau_0)}.
\end{align*}

The Banach fixed point theorem guarantees now the existence of one unique fixed point on $k(\tau_0)$. Let $\zeta(u,\zeta_{0})$ be the fixed point found above. The proof until here shows that $\zeta(\cdot,\zeta_0)$ belongs to $C((0,\tau],\mathbb{B}_{1+\theta})$ for $0\leq \theta <\min\{\tilde{\gamma}(\ep),\eta(\ep)\}$. So it remains to show that $\zeta(\cdot,\zeta_0)\in C((0,\tau],\mathbb{B}_1)$. To this purpose, we first show  Eq. (\ref{14}) from {\bf Lemma} \ref{lemma2} and {\bf Lemma} \ref{lemma4}, yields
\begin{eqnarray*}
&&u^{\theta\gamma}\left\| \zeta(u,\zeta_{0})\right\|_{\mathbb{B}_{1+\theta}}\notag\\
&\leq& u^{\theta\gamma}\left\| \mitt_{\alpha,\beta}(-u^{\alpha}\mathcal{A})\zeta_0\right\|_{\mathbb{B}_{1+\theta}}+{\bf \Theta \Phi}\mathcal{B}_{\ep_1}^{\theta,\tilde{\gamma}}\left( \lambda_{\ep_1}(u)^{\rho_1}u^{\theta\gamma+\alpha(\tilde{\gamma}(\ep_1)-\theta)-\rho_1\ep_1}+{\mathcal V_{1}}(u)u^{\theta\gamma+\alpha(\tilde{\gamma}(\ep_1)-\theta)+q_1^*}\right)\notag\\
&&+{\bf \Theta \Phi}\mathcal{B}_{\ep_1}^{\theta,\eta}\left(\frac{ \lambda_{\ep_2}(u)^{\rho_2}}{1+v-\rho_2\ep_2}u^{\theta\gamma+1+v-\rho_2\ep_2}+{\mathcal V_{2}}(u)u^{\theta\gamma+\alpha+v+q_1^{*}+1}\mathcal{B}(1+q_1^*,1+v)\right)\notag\\
&\leq& u^{\theta\gamma}\left\| \mitt_{\alpha,\beta}(-u^\alpha \mathcal{A})\zeta_0\right\|_{\mathbb{B}_{1+\theta}}+{\bf \Theta \Phi}\mathcal{B}_{\ep_2}^{\theta,\eta}u^{\theta\gamma+\alpha(\tilde{\gamma}(\ep_2)-\theta)}\notag\\
&&\times\left( \displaystyle\sup_{s\in[0,\tau]}s^{\ep_2}\left\| \zeta(s,\zeta_0)\right\|_{\mathbb{B}_{1+\ep_2}}^{\rho_2}\frac{u^{-\rho_2\ep_2}}{1+v-\rho_2\ep_2}+{\mathcal V_{2}}(u)\mathcal{B}(1+q_1^*,1+v)u^{q_1^*+1}\right)\notag\\
&&+{\bf \Theta \Phi}\mathcal{B}_{\ep_1}^{\theta,\tilde{\gamma}}u^{\theta\gamma+\alpha(\tilde{\gamma}(\ep_{1})-\theta)} \left( \displaystyle\sup_{s\in[0,\tau]}s^{\ep_1}\left\| \zeta(s,\zeta_0)\right\|_{\mathbb{B}_{1+\ep_1}}^{\rho_1}u^{-\rho_1\ep_1}+{\mathcal V_{1}}(u)u^{q_1^*}\right).
\end{eqnarray*}

So, for $\theta=\ep_i$, yields
\begin{eqnarray}
&&u^{\ep_i\gamma}\left\| \zeta(u,\zeta_{0})\right\|_{\mathbb{B}_{1+\ep_i}}\notag\\
&\leq& u^{\ep_i\gamma}\left\| \mitt_{\alpha,\beta}(-u^\alpha \mathcal{A})\zeta_0\right\|_{\mathbb{B}_{1+\ep_i}}+{\bf \Theta \Phi}\mathcal{B}_{\ep_2}^{\ep_i,\eta}\left( \displaystyle\sup_{s\in[0,\tau]}s^{\ep_2}\left\| \zeta(s,\zeta_0)\right\|_{\mathbb{B}_{1+\ep_2}}^{\rho_2}\frac{1}{1+v-\rho_2\ep_2}+{\mathcal V_{2}}(u)\mathcal{B}(1+q_1^*,1+v)\right)\notag\\
&&+{\bf \Theta \Phi}\mathcal{B}_{\ep_1}^{\ep_i,\tilde{\gamma}}\left( \displaystyle\sup_{s\in[0,\tau]}s^{\ep_1}\left\| \zeta(s,\zeta_0)\right\|_{\mathbb{B}_{1+\ep_1}}^{\rho_1}+{\mathcal V_{1}}(u)\right)\notag\\
&\leq& u^{\ep_i\tilde{\gamma}}\left\| \mitt_{\alpha,\beta}(-u^\alpha \mathcal{A})\zeta_0\right\|_{\mathbb{B}_{1+\ep_i}}+{\bf \Theta \Phi}\mathcal{B}_{\ep_2}^{\ep_i,\eta}\left( \left\| \zeta(s,\zeta_0)\right\|_{k(\tau_0)}\frac{\mu^{\rho_2-1}}{1+v-\rho_2\ep_2}+{\mathcal V_{2}}(u)\mathcal{B}(1+q_1^*,1+v)\right)\notag\\
&&+{\bf \Theta \Phi}\mathcal{B}_{\ep_1}^{\ep_i,\tilde{\gamma}}\left( \left\| \zeta(s,\zeta_0)\right\|_{k(\tau_0)}\mu^{\rho_1-1}+{\mathcal V_{1}}(u)\right)\notag\\
&\leq& u^{\ep_i\tilde{\gamma}}\left\| \mitt_{\alpha,\beta}(-u^\alpha \mathcal{A})\zeta_0\right\| _{\mathbb{B}_{1+\ep_i}}+\frac{1}{8}\left\| \zeta(u,\zeta_{0})\right\|_{k(\tau_0)}+{\bf \Theta \Phi}\tilde{\mathcal{B}}\left( \frac{{\mathcal V_{2}}(u)}{1+q_1^*}+{\mathcal V_{1}}(u)\right).
\end{eqnarray}

Then, we have
\begin{eqnarray}\label{23}
\dfrac{7}{8}\left\| \zeta(u,\zeta_{0})\right\|_{k(\tau_0)}\leq \max_{i=1,2}\left\lbrace \displaystyle\sup_{u\in(0,\tau_0]}u^{\ep_i\gamma}\left\| \mitt_{\alpha,\beta}(-u^{\alpha}\mathcal{A})\zeta_0\right\|_{\mathbb{B}_{1+\ep_i}}\right\rbrace +{\bf \Theta \Phi}\tilde{\mathcal{B}}({\mathcal V_{2}}(u)\mathcal{B}(1+q_1^*,1+v)+{\mathcal V_{1}}(u))\rightarrow 0,
\end{eqnarray}
as $u\rightarrow 0^+$.

Using Eq. (\ref{23}) (convergence) and the former estimates, then $u^{\theta\gamma}\left\| \zeta(u,\zeta_{0})\right\|_{\mathbb{B}_{1+\theta}}\rightarrow 0$ as $u\rightarrow 0^+$, for $0<\theta<\min\{\tilde{\gamma}(\ep),\eta(\ep)\}$. We finish {\bf STEP 1}, with the proof of
\begin{align}\label{24}
\left\| \zeta(u,\zeta_{0})-\zeta_0\right\|_{\mathbb{B}_1}\rightarrow 0^+,\quad \text{as} \quad u\rightarrow 0^+.
\end{align}

Indeed, choosing $\theta=0$ in {\bf Lemma} \ref{lemma2} and {\bf Lemma} \ref{lemma4}, yields
\begin{eqnarray*}
&&\left\| \zeta(u,\zeta_{0})-\zeta_0\right\|_{\mathbb{B}_1}\notag\\
&\leq& \left\| \mitt_{\alpha,\beta}(-u^\alpha \mathcal{A})\zeta_0-\zeta_0\right\|_{\mathbb{B}_1}+{\bf \Theta \Phi}\mathcal{B}_{\ep_1}^{0,\eta} \left\lbrace \displaystyle\sup_{s\in[0,\tau]}s^{\ep_2}\left\| \zeta(s,\zeta_0)\right\| _{\mathbb{B}_{1+\ep_2}}^{\ep_2}\frac{1}{1+v-\rho_2\ep_2}{\mathcal V_{2}}(u)\mathcal{B}(1+q_1^*,1+v)\right\rbrace \\
&&+{\bf \Theta \Phi}\mathcal{B}_{\ep_1}^{0,\tilde{\gamma}}\left\lbrace \displaystyle\sup_{s\in[0,\tau]}s^{\ep_1}\left\| \zeta(s,\zeta_0)\right\|_{\mathbb{B}_{1+\ep_1}}^{\rho_1}+{\mathcal V_{1}}(u)\right\rbrace \rightarrow 0, 
\end{eqnarray*}
as $u\rightarrow 0^+$ for the Eq. (\ref{14}). Therefore, we have the Eq. (\ref{24}) and we conclude that $\zeta(\cdot, \zeta_0)$ is an $\ep$-regular mild solutions starting at $\zeta_0$, in the set $k(\tau_0)$.

{\bf STEP 2.} We are going to show that this solution is the any one in the class of the functions $\phi$ such that \begin{align}\label{25}
u^{\ep_i\gamma}\left\| \phi(u)\right\|_{\mathbb{B}_1+\ep_i}\rightarrow 0^+,\quad \text{as} \quad u\rightarrow 0^+, \quad i=1,2.
\end{align}

If $\phi(u)$ is an $\ep$-regular mild solutions satisfying the Eq. (\ref{25}), $\exists\tilde{\tau}$ with $\tilde{\tau}\leq \tau_0$ such that $\left\| \phi(u)\right\|_{k(\tau_0)}\leq\mu$ on $(0,\tilde{\tau}]$. Note that using {\bf STEP 1}, the unique solution in $k(\tilde{\tau})$ is the solution in $k(\tau_0)$ constrained to $[0,\tilde{\tau}]$. In this sense, we get 
\begin{align*}
\zeta(u,\zeta_{0})=\phi(u),\quad u\in[0,\tilde{\tau}].
\end{align*}

{\bf STEP 3.} Continuous dependency on the initial data.

Indeed, we take advantages of {\bf Lemma} \ref{lemma3} and {\bf Lemma} \ref{lemma5}, to obtain
\begin{eqnarray*}
u^{\theta\gamma}\left\| \zeta(u,\zeta_{0})-\zeta(u,w_0)\right\|_{\mathbb{B}_{1+\theta}}&\leq& {\bf \Theta} \left\| \zeta_0-w_0\right\|_{\mathbb{B}_1}+\Gamma_{\theta}^{2}(u)\displaystyle\sup_{s\in[0,\tau_{0}]}s^{\ep_2}\left\| \zeta(s,\zeta_0)-\zeta(s,w_0)\right\|_{\mathbb{B}_{1+\ep_2}}\\
&&+\Gamma_{\theta}(u)\displaystyle\sup_{s\in[0,\tau]}s^{\ep_1}\left\|\zeta(s,\zeta_0)-\zeta(s,w_0)\right\|_{\mathbb{B}_{1+\ep_1}}.
\end{eqnarray*}

For $i=1,2$, we have been in {\bf STEP 1} that $\Gamma_{\ep_i}(u)+\gamma_{\ep_i}^2(u)\leq\frac{1}{2}$.

Then,
\begin{align*}
u^{\ep_i\gamma}\left\| \zeta(u,\zeta_{0})-\zeta(u,w_0)\right\|_{\mathbb{B}_{1+\ep_i}}&\leq {\bf \Theta}\left\| \zeta_0-w_0\right\|_{\mathbb{B}_1}+\frac{1}{2}\left\|\zeta(u,\zeta_{0})-\zeta(u,w_0)\right\|_{k(\tau_0)}\leq 2{\bf \Theta} \left\| \zeta_0-w_0\right\|_{\mathbb{B}_1}.
\end{align*}

Taking $0\leq \theta \leq \theta_0<\min\{\tilde{\gamma}(\ep),\eta(\ep)\}$, these inequalities imply that
\begin{align*}
u^{\theta\gamma}\left\| \zeta(u,\zeta_{0})-\zeta(u,w_0)\right\|_{\mathbb{B}_{1+\theta}}\leq {\bf \Theta} \left\|\zeta_0-w_0\right\|_{\mathbb{B}_1}+\left(\Gamma_{\theta,\gamma}(u)+\Gamma_{\theta,\gamma}^{2}(u)\right) 2{\bf \Theta}\left\|\zeta_0-w_0\right\|_{\mathbb{B}_1},
\end{align*}
that is,
\begin{align*}
u^{\theta\gamma}\left\| \zeta(u,\zeta_{0})-\zeta(u,w_0)\right\|_{\mathbb{B}_{1+\theta}}\leq c(\theta_0,\tau_0)\left\| \zeta_0-w_0\right\|_{\mathbb{B}_1},
\end{align*}
 where $C(\theta_0,\tau_{0})={\bf \Theta}\left( 1+2\sup\{\Gamma_{\theta,\gamma}(u)+\Gamma_{\theta,\gamma}^2(u), u\in[0,\tau_0],0\leq\theta\leq\theta_0\}\right) $.
\end{proof}

\begin{lemma}\label{lemma6} Set $T_1,T_2\leq 1$. Let $\phi$ and $\psi$ be $\ep$-regular mild solutions of {\rm Eq. (\ref{Eq1})} in $[0,T_1]$ and $[0,T_2]$, respectively, such that they coincide in $[0,\tilde{T}]$, for some $\tilde{T}\leq\min\{T_1,T_2\}$. If $f\in\mathcal{F}_1$ and $g\in\mathcal{F}_2$, then $\phi(u)=\psi(u)$, for all $u\in[0,\min\{T_1,T_2\}]$.
\end{lemma}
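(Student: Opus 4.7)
The plan is to run a standard connectedness argument: define
$\tau^* = \sup\{t \in [0,\min\{T_1,T_2\}] : \phi(u) = \psi(u) \text{ for all } u \in [0,t]\}$.
By hypothesis $\tau^* \geq \tilde{T} > 0$, and by the continuity of $\phi$ and $\psi$ we have $\phi(\tau^*)=\psi(\tau^*)$. We then argue by contradiction, assuming $\tau^* < \min\{T_1,T_2\}$, and show that $\phi$ and $\psi$ must agree on a slightly larger interval $[0,\tau^*+\delta]$.

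Assume $\tau^* < \min\{T_1,T_2\}$. Subtracting the mild formulation (\ref{solutionmild}) for $\phi$ and $\psi$ and using that both solutions agree on $[0,\tau^*]$, the contributions from $s \in [0,\tau^*]$ cancel. Splitting the inner $g$-integral as $\int_0^s = \int_0^{\tau^*} + \int_{\tau^*}^s$ and again using $\phi=\psi$ on $[0,\tau^*]$ to kill the first piece, I obtain, for $u \in [\tau^*,\tau^*+\delta]$,
\begin{align*}
\phi(u)-\psi(u) &= \int_{\tau^*}^u (u-s)^{\alpha-1}\mathcal{Q}^{\alpha}_{u,s}(\mathcal{A}) \int_{\tau^*}^s \bigl[g(s-r,\phi(r))-g(s-r,\psi(r))\bigr]\,dr\,ds \\
&\quad + \int_{\tau^*}^u (u-s)^{\alpha-1}\mathcal{Q}^{\alpha}_{u,s}(\mathcal{A})\bigl[f(s,\phi(s))-f(s,\psi(s))\bigr]\,ds.
\end{align*}
This is the key reduction: both integrals now live on $[\tau^*,u]$ only, so after the time-shift $u \mapsto u-\tau^*$ we are essentially in the same setting as {\bf Lemma \ref{lemma3}} and {\bf Lemma \ref{lemma5}}, but with $\phi(\tau^*)=\psi(\tau^*)$ playing the role of the initial data.

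Applying the Lipschitz-type estimates of {\bf Lemma \ref{lemma3}} and {\bf Lemma \ref{lemma5}} to the shifted problem (with $\mu$ chosen as an upper bound for $\sup_{s\in[\tau^*,\tau^*+\delta]} s^{\varepsilon_i}\|\phi(s)\|_{\mathbb{B}_{1+\varepsilon_i}}$ and analogously for $\psi$, which are finite since both are $\varepsilon$-regular on closed subintervals), I get, for $i=1,2$,
\begin{align*}
(u-\tau^*)^{\varepsilon_i \gamma}\|\phi(u)-\psi(u)\|_{\mathbb{B}_{1+\varepsilon_i}} \leq \bigl(\Gamma_{\varepsilon_i,\gamma}(\delta)+\Gamma_{\varepsilon_i,\gamma}^{2}(\delta)\bigr)\sup_{s\in[\tau^*,\tau^*+\delta]}(s-\tau^*)^{\varepsilon \gamma}\|\phi(s)-\psi(s)\|_{\mathbb{B}_{1+\varepsilon}}.
\end{align*}
Since $\Gamma_{\varepsilon_i,\gamma}(\delta)$ and $\Gamma_{\varepsilon_i,\gamma}^{2}(\delta)$ carry strictly positive powers of $\delta$ (or of $\mathcal{V}_j(\delta)\to 0$), I can pick $\delta$ small so that the total factor is $<1$. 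Taking the supremum over $u\in[\tau^*,\tau^*+\delta]$ on the left forces $\sup = 0$, hence $\phi=\psi$ on $[\tau^*,\tau^*+\delta]$. Together with equality on $[0,\tau^*]$ this contradicts the definition of $\tau^*$, and therefore $\tau^* = \min\{T_1,T_2\}$.

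The main obstacle is handling the \emph{memory term} $\int_0^s g(s-r,\zeta(r))\,dr$, because the mild equation is not purely local: a priori the value of $\phi(u)$ for $u$ slightly larger than $\tau^*$ could depend on $\phi(r)$ for $r<\tau^*$. The resolution, as indicated above, is that for this particular Volterra structure the contribution of the coinciding past $[0,\tau^*]$ cancels in the difference $\phi-\psi$, so only the small shifted interval enters the Lipschitz estimates and the standard contraction argument based on {\bf Lemma \ref{lemma3}} and {\bf Lemma \ref{lemma5}} closes the proof. As a cleaner alternative, the same conclusion can be reached in one shot by applying the Gronwall-type inequality of {\bf Theorem \ref{nov}} (with $\psi(u)=u$) to the scalar function $U(u):=\|\phi(u)-\psi(u)\|_{\mathbb{B}_{1+\varepsilon}}$ on $[\tilde{T},\min\{T_1,T_2\}]$, using $v\equiv 0$ as the free term since $U(\tilde{T})=0$.
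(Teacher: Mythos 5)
Your primary argument (define $\tau^*$, cancel the coinciding past, run a local contraction on $[\tau^*,\tau^*+\delta]$) is a sound alternative, but it is not the paper's route: the paper works globally on $[\tilde{T},\min\{T_1,T_2\}]$, uses the Lipschitz conditions defining $\mathcal{F}_1,\mathcal{F}_2$ together with the boundedness of $\phi,\psi$ in $\mathbb{B}_{1+\ep}$ away from $u=0$ to reduce everything to the single scalar inequality
$\xi(u)\leq 3{\bf \Theta\Phi}S\int_{\tilde{T}}^{u}(u-s)^{\alpha(k-\ep)-1}\xi(s)\,ds$ with $\xi(u)=\sup_{s\in[\tilde{T},u]}\|\phi(s)-\psi(s)\|_{\mathbb{B}_{1+\ep}}$ and $k=\min\{\tilde{\gamma}(\ep_1),\eta(\ep_2)\}$, and then invokes the fractional Gronwall inequality (Theorem \ref{nov}) with zero free term to get $\xi\equiv 0$ in one shot --- which is exactly the ``cleaner alternative'' you mention in your last sentence (note, though, that the free term vanishes because the initial-data terms and the contributions of the common past cancel in the difference of the two mild formulas, not merely because $U(\tilde{T})=0$). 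Your continuation argument buys independence from the fractional Gronwall lemma, but at a price you should acknowledge: Lemmas \ref{lemma3} and \ref{lemma5} cannot be cited verbatim for the shifted problem, since they are anchored at the origin, with the singular coefficients $c(s)\leq Cs^{v}$ and the weighted norms $\sup_{s}s^{\ep_i}\|\cdot\|_{\mathbb{B}_{1+\ep_i}}$; after the shift $u\mapsto u-\tau^*$ you must redo (simpler, bounded-coefficient) versions of those estimates, using that $c$, $\mathcal{V}_j$ are bounded on $[\tilde{T},1]$, and the smallness of the contraction factor then comes from the positive exponents $\alpha(\tilde{\gamma}(\ep_1)-\ep)$ and $\alpha(\eta(\ep_2)-\ep)$ of $\delta$, which requires $\min\{\tilde{\gamma}(\ep_1),\eta(\ep_2)\}>\ep$ --- the same standing assumption the paper uses implicitly to make its Gronwall kernel exponent $\alpha(k-\ep)-1$ integrable. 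With these points spelled out, both arguments close; the paper's is shorter once Theorem \ref{nov} is in hand, while yours is the more self-contained bootstrap.
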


\begin{proof} Using {\bf Definition \ref{defi1}} for $\tilde{T}\leq u\leq \min\{T_1,T_2\}$,
\begin{eqnarray}\label{I}
&&\left\| \phi(u)-\psi(u)\right\|_{\mathbb{B}_{1+\ep}}\notag\\
&\leq&\int_{0}^{u}\left\|(u-s)^{\alpha-1}\mathcal{Q}^{\alpha}_{u,s}(\mathcal{A})\right\|_{\mathbb{B}_{1+\ep}}\int_{0}^{s}\left\| g(s-r,\phi(r))-g(s-r,\psi(r))\right\| _{\mathbb{B}_{1+\ep}}drds\notag\\
&&+\int_{0}^{u}\left\| (u-s)^{\alpha-1}\mathcal{Q}^{\alpha}_{u,s}(\mathcal{A})\right\|_{\mathbb{B}_{1+\ep}}\left\| f(s,\phi(s))-f(s,\psi(s))\right\|_{\mathbb{B}_{1+\ep}}ds\notag\\
&\leq& \int_{0}^{u} \left\| (u-s)^{\alpha-1}\mathcal{Q}^{\alpha}_{u,s}(\mathcal{A})\right\|_{\mathbb{B}_{1+\ep}}\int_{0}^{s}c(s-r)r^v\left\| \phi(r)-\psi(r)\right\|_{\mathbb{B}_{1+\ep_2}}\notag\\
&&\times\left( \left\| \phi(r)\right\| _{\mathbb{B}_{1+\ep_2}}^{\rho_2-1}+\left\| \psi(r)\right\| _{\mathbb{B}_{1+\ep_2}}^{\rho_2-1}+{\mathcal V_{2}}(s-r)(s-r)^v \right) drds \notag\\
&&+ \int_{0}^{u} \left\| (u-s)^{\alpha-1}\mathcal{Q}^{\alpha}_{u,s}(\mathcal{A})\right\|_{X_{1+\ep}}c(s)\left\| \phi(s)-\psi(s)\right\|_{\mathbb{B}_{1+\ep_1}}\left( \left\| \phi(s)\right\| _{\mathbb{B}_{1+\ep_1}}^{\rho_1-1}+\left\| \psi(s)\right\| _{\mathbb{B}_{1+\ep_1}}^{\rho_1-1}+{\mathcal V_{1}}(s)s^{q_1}
\right) ds \notag\\
&\leq& {\bf \Theta \Phi} \int_{0}^{u}(u-s)^{\alpha(\eta(\ep_2)-\ep)-1}\int_{0}^{s}\left\| \phi(r)-\psi(r)\right\|_{\mathbb{B}_{1+\ep_2}}r^v\left( \left\| \phi(r)\right\|_{\mathbb{B}_{1+\ep_2}}^{\rho_2-1}+\left\| \psi(r)\right\|_{\mathbb{B}_{1+\ep_2}}^{\rho_2-1}+{\mathcal V_{2}}(s-r)(s-r)^{v}\right)drds\notag\\
&&+{\bf \Theta \Phi} \int_{0}^{u}(u-s)^{\alpha(\tilde{\gamma}(\ep_1)-\ep)-1}\left\| \phi(s)-\psi(s)\right\|_{\mathbb{B}_{1+\ep_1}}\left( \left\| \phi(s)\right\|_{\mathbb{B}_{1+\ep_1}}^{\rho_1-1}+\left\| \psi(s)\right\|_{\mathbb{B}_{1+\ep_1}}^{\rho_1-1}+{\mathcal V_{1}}(s)s^{q_1}\right)ds\notag\\
&=&{\bf \Theta \Phi} \int_{\tilde{T}}^{u}(u-s)^{\alpha(\eta(\ep_2)-\ep)-1}\int_{\tilde{T}}^{s}\left\| \phi(r)-\psi(r)\right\|_{\mathbb{B}_{1+\ep_2}}r^v\left( \left\| \phi(r)\right\|_{\mathbb{B}_{1+\ep_2}}^{\rho_2-1}+\left\| \psi(r)\right\|_{\mathbb{B}_{1+\ep_2}}^{\rho_2-1}+{\mathcal V_{2}}(s-r)^{v}\right)drds\notag\\
&&+{\bf \Theta \Phi} \int_{\tilde{T}}^{u}(u-s)^{\alpha(\tilde{\gamma}(\ep_1)-\ep)-1}\left\| \phi(s)-\psi(s)\right\|_{\mathbb{B}_{1+\ep_1}}\left( \left\| \phi(s)\right\|_{\mathbb{B}_{1+\ep_1}}^{\rho_1-1}+\left\| \psi(s)\right\|_{\mathbb{B}_{1+\ep_1}}^{\rho_1-1}+{\mathcal V_{1}}(s)s^{q_1}\right)ds\notag\\
&\leq& {\bf \Theta \Phi} \left( \displaystyle\sup_{s\in[\tilde{T},u]}\left\lbrace \left\| \phi(s)\right\|_{\mathbb{B}_{1+\ep_2}}^{\rho_2-1}+\left\| \psi(s)\right\|_{\mathbb{B}_{1+\ep_2}}^{\rho_2-1}\right\rbrace \right)\left( \int_{\tilde{T}}^{u}(u-s)^{\alpha(\eta(\ep_2)-\ep)-1}\int_{\tilde{T}}^{s}r^v\left\| \phi(r)-\psi(r)\right\|_{\mathbb{B}_{1+\ep_2}}drds\right)\notag\\
&&+{\bf \Theta \Phi}\delta_2\left( \int_{\tilde{T}}^{u}(u-s)^{\alpha(\eta(\ep_2)-\ep)-1}\int_{\tilde{T}}^{s}\left\| \phi(r)-\psi(r)\right\| _{\mathbb{B}_{1+\ep_2}}(s-r)^v r^v drds\right)\notag\\
&&+{\bf \Theta \Phi}\left( \displaystyle\sup_{s\in[\tilde{T},u]}\left\lbrace \left\|\phi(s)\right\|_{\mathbb{B}_{1+\ep_1}}^{\rho_1-1}+\left\| \psi(s)\right\|_{\mathbb{B}_{1+\ep_1}}^{\rho_1-1}\right\rbrace +\delta_1 \tilde{T}^{q_1}\right)\left( \int_{\tilde{T}}^{u}(u-s)^{\alpha(\tilde{\gamma}(\ep_1)-\ep)-1}\left\| \phi(s)-\psi(s)\right\|_{\mathbb{B}_{1+\ep_1}}ds\right).
\end{eqnarray}

Since $\ep_i\leq\ep$, yields
\begin{align*}
\left\|z \right\|_{\mathbb{B}_{1+\ep_i}}\leq C(\ep_1,\ep_2)\left\|z \right\|_{\mathbb{B}_{1+\ep}},\quad\text{for all}\quad z\in \mathbb{B}_{1+\ep_i}.
\end{align*}

Then, 
\begin{align*}
\left\| \phi(s)-\psi(s)\right\|_{1+\ep_i}\leq C(\ep_1,\ep_2)\left\| \phi(s)-\psi(s)\right\|_{1+\ep},\quad i=1,2.
\end{align*}

Also, the numbers $\displaystyle\sup_{s\in[\tilde{T},u]}\left\| \phi(s)\right\|_{\mathbb{B}_{1+\ep_i}}$ and $\displaystyle\sup_{[\tilde{T},u]}\left\| \psi(s)\right\|_{\mathbb{B}_{1+\ep_i}}$ are well defined $\left(\phi,\psi\in C((0,\min\{T_1,T_2\}]\mathbb{B}_{-1}\right)$, then from Eq. (\ref{I}), yields
\begin{eqnarray*}
\left\| \phi(u)-\psi(u)\right\|_{\mathbb{B}_{1+\ep}}&\leq& {\bf \Theta \Phi}\left(c(\ep_1,\ep_2)\displaystyle\sup_{[\tilde{T},u]}\left\lbrace \left\|\phi(s)\right\|_{\mathbb{B}_{1+\ep}}^{\rho_2-1}+\left\| \psi(s)\right\|_{\mathbb{B}_{1+\ep}}^{\rho_2-1}\right\rbrace \right)\notag\\
&&\times \left(\int_{\tilde{T}}^{u}(u-s)^{\alpha(\eta(\ep_2)-\ep)-1}c(\ep_1,\ep_2)\int_{\tilde{T}}^{s}r^v\left\| \phi(r)-\psi(r)\right\|_{\mathbb{B}_{1+\ep}}drds\right)\notag\\
&&+{\bf \Theta \Phi}\delta_2\left( \int_{\tilde{T}}^{u}(u-s)^{\alpha(\eta(\ep_2)-\ep)-1}\int_{\tilde{T}}^{s}\left\|\phi(r)-\psi(r)\right\|_{\mathbb{B}_{1+\ep}}(s-r)^2r^vdrds\right)\notag\\
&&+{\bf \Theta \Phi}\left( c(\ep_1,\ep_2)\displaystyle\sup_{s\in[\tilde{T},u]}\left\lbrace \left\|\phi(s)\right\|_{\mathbb{B}_{1+\ep}}^{\rho_1-1}+\left\|\psi(s)\right\|_{1+\ep}^{\rho_1-1}\right\rbrace +\delta_1 \tilde{T}^{q_1}\right)\notag\\
&&\times\left( c(\ep_1,\ep_2)\int_{\tilde{T}}^{u}(u-s)^{\alpha(\tilde{\gamma}(\ep_1)-\ep)-1}\left\| \phi(s)-\psi(s)\right\| _{\mathbb{B}_{1+\ep}}ds\right).
\end{eqnarray*}

Note that,
\begin{eqnarray*}
\int_{\tilde{T}}^{s}r^v\left\| \phi(r)-\psi(r)\right\|_{\mathbb{B}_{1+\ep}}dr&\leq& \displaystyle\sup_{r\in[\tilde{T},s]}\left\|\phi(r)-\psi(r)\right\|_{\mathbb{B}_{1+\ep}}\int_{\tilde{T}}^{s}r^vdr\notag\\
&=&\displaystyle\sup_{r\in[\tilde{T},s]} \left\| \phi(r)-\psi(r)\right\| _{\mathbb{B}_{1+\ep}}\frac{(s-\tilde{T})^v}{v+1}\notag\\
&\leq& \frac{1}{v+1}\displaystyle\sup_{r\in[\tilde{T},s]}\left\|\phi(r)-\psi(r)\right\|_{\mathbb{B}_{1+\ep}},
\end{eqnarray*}
and
\begin{align*}
\int_{\tilde{T}}^{s}\left\|\phi(r)-\psi(r)\right\|_{\mathbb{B}_{1+\ep}}(s-r)^{q_1}r^vdr\leq \displaystyle\sup_{r\in[\tilde{T},s]}\left\| \phi(r)-\psi(r)\right\|_{\mathbb{B}_{1+\ep_2}}\int_{\tilde{T}}^{s}(s-r)^{q_1}r^vdr.
\end{align*}

Taking the following variable change $r=st$, we get
\begin{align*}
\int_{\tilde{T}}^{s}\left\| \phi(r)-\psi(r)\right\|_{\mathbb{B}_{1+\ep}}(s-r)^{q_1}r^vdr\leq \displaystyle\sup_{r\in[\tilde{T},s]}\left\|\phi(r)-\psi(r)\right\|_{\mathbb{B}_{1+\ep_2}}\mathcal{B}(1+q_1,1+v).
\end{align*}

Consider $S=\max\left\lbrace \mathcal{A}_1,\dfrac{\mathcal{A}_2}{1+v},c(\ep_1,\ep_2)\delta_2\mathcal{B}(1+q_1,1+v)\right\rbrace $, where
\begin{align*}
\mathcal{A}_2=c(\ep_1,\ep_2)\displaystyle\sup_{s\in[\tilde{T},u]}\left\lbrace \left\| \phi(s)\right\|_{\mathbb{B}_{1+\ep}}^{\rho_2-1}+\left\| \psi(s)\right\|_{\mathbb{B}_{1+\ep}}^{\rho_2-1}\right\rbrace, \quad \mathcal{A}_1=c(\ep_1,\ep_2)\displaystyle\sup_{s\in[\tilde{T},u]}\left\lbrace \left\| \phi(s)\right\|_{\mathbb{B}_{1+\ep}}^{\rho_1-1}+\left\| \psi(s)\right\|_{\mathbb{B}_{1+\ep}}^{\rho_1-1}\right\rbrace +\delta_1 T^{q_1},
\end{align*}
and
$\xi(u)=\displaystyle\sup_{s\in[\tilde{T},u]}\left\| \phi(s)-\psi(s)\right\|_{\mathbb{B}_{1+\ep}} $.

These notes lead to,
\begin{align*}
\left\|\phi(u)-\psi(u)\right|_{\mathbb{B}_{1+\ep}}\leq {\bf \Theta \Phi}S\left\lbrace 2\int_{\tilde{T}}^{u}(u-s)^{\alpha(\eta(\ep_2)-\ep)-1}\xi(s)ds+\int_{\tilde{T}}^{u}(u-s)^{\alpha(\tilde{\gamma}(\ep_2)-\ep)-1}\xi(s)ds\right\rbrace 
\end{align*}

Recalling that $\min\{T_1,T_2\}\leq 1$ and $(u-s)\in[0,1]$, put $k=\min\{\tilde{\gamma}(\ep_1),\eta(\ep_2)\}$. Then, yields
\begin{align*}
(u-s)^{\alpha(\eta(\ep_2)-\ep)-1}\leq (u-s)^{\alpha(k-\ep)-1},\quad\text{and}\quad (u-s)^{\alpha(\tilde{\gamma}(\ep_1)-\ep)-1}\leq (u-s)^{\alpha(k-\ep)-1}. 
\end{align*}
Therefore,
\begin{align*}
\xi(u)\leq 3{\bf \Theta \Phi}S\left( \int_{\tilde{T}}^{u}(u-s)^{\alpha(k-\ep)-1}\xi(s)ds\right).
\end{align*}

Now, from Gronwall inequality, follows that $\xi(u)=0$ and therefore $\phi(u)=\psi(u)$ also for $u\in[\tilde{T}, \min\{T_1,T_0\}]$. Thus, {\bf Lemma} \ref{lemma6} infers that $\phi(u)=\zeta(u,\zeta_{0})$ for all $u\in[0,\tau_0]$ and the uniqueness is proved.
\end{proof}

\section{Examples}

In this section, we present two examples that are related to the problem (\ref{15-16}) and consequently, we are going to use the {\rm \bf Theorem \ref{theo1}}.

\begin{example} Consider $\gamma=1$, $g(u-s,\phi(s))=\dfrac{(u-s)^{\alpha-1}}{\Gamma(\alpha)}$ and $f(u,\phi(u))=0$ in the problem (\ref{15-16}). Note that 
\begin{eqnarray*}
    \int_{0}^{u} g(u-s,\phi(s)) ds=\int_{0}^{u}\frac{(u-s)^{\alpha-1}}{\Gamma(\alpha)}ds= \frac{u^{\alpha}}{\Gamma(\alpha+1)}.
\end{eqnarray*}

Thus, we have the following particular problem given by
\begin{equation}\label{novo21}
 \left\{ 
 \begin{array}{cll}
 ^{H}\der_{0+}^{\alpha,\beta}\zeta(u) & = & \mathcal{A}\zeta(u)+\dfrac{u^{\alpha}}{\Gamma(\alpha+1)},\quad u>0 \\ 
 \zeta(0) & = & \zeta_0,
	 \end{array}
	 \right.
	 \end{equation}
which is the unique one satisfying 
 \begin{align*}
	 u^{\ep_i\gamma}\left\| \zeta(u,\zeta_{0})\right\|_{\mathbb{B}_{1+\ep_i}}\xrightarrow{u\rightarrow 0^+} 0,\quad i=1,2.
	 \end{align*}
\end{example}

Note that the functions $f$ and $g$, satisfy the conditions of Theorem \ref{theo1}. In this sense, we have that the problem (\ref{novo21}) admits the existence and uniqueness of solutions.

\begin{example} Consider the following fractional integro-differential equation
\begin{equation}\label{application1}
 \left\{ 
 \begin{array}{rll}
\dfrac{\partial^{\alpha}}{\partial u^{\alpha}} \zeta(x,u) -\zeta_{xx}(x,u) & = & \psi(u,\zeta(x,u))+ \displaystyle\int_{0}^{u} \rho(s-u) \varphi(s,\zeta(x,s))ds,\,\,\,\, \,\, \,\,\,  \\ 
 \zeta(u,0) & = & \zeta(u,\pi)=0,\,\,u>0\\
 \zeta(0,x)&=&\zeta_{0}(x),\,\,x\in [0,\pi] 
	 \end{array}
	 \right.
\end{equation}
where $\dfrac{\partial^{\alpha}}{\partial u^{\alpha}}(\cdot)$ is the Riemann-Liouville fractional derivative of order $0<\alpha\leq 1$. Let us take $\mathbb{B}_{0}=L^{2}[0,\pi]$. Define the operator $\mathcal{A}: D(\mathcal{A})\subset \mathbb{B}_{0}\rightarrow \mathbb{B}_{0}$ by $\mathcal{A} \zeta= \zeta_{xx}$ where the domain $\mathcal{D}(\mathcal{A})$ is given by $\left\{\zeta\in \mathbb{B}_{0}: \zeta,\zeta_{x}\,\,{\rm are}\,\,{\rm absolulety}\,\,{\rm continuous} ,\,\, \zeta_{xx}\in\mathbb{B}_{0},\zeta(0)=\zeta(\pi)=0\right\}$.

Consider the operators $f(u,\zeta(u))(x)=\psi(u,\zeta(x,u))$ and $g(u-s,\zeta(s))(x)=\rho(s-u)\varphi(s,\zeta(x,s))$ and satisfying the conditions of {\rm \bf Theorem \ref{theo1}}. Then, the above problem can be written was
\begin{equation}\label{application2}
 \left\{ 
 \begin{array}{rll}
{\bf D}_{0+}^{\alpha} \zeta(u) & = & \mathcal{A} \zeta(u)+ \displaystyle\int_{0}^{u} g(s-u,\zeta(s))ds+f(u,\zeta(u)),\,\,u>0\\ 
 \zeta(0) & = & \zeta_0.
 \end{array}
	 \right.
\end{equation}

Note that, taking $\beta=1$ and $\gamma=1$ in problem {\rm(\ref{15-16})}, we have the problem {\rm(\ref{application2})}. In this sense, the problem {\rm(\ref{application1})} and {\rm(\ref{application2})}, are particular cases of the problem {\rm(\ref{15-16})}. Again, we can apply the {\rm \bf Theorem \ref{theo1}}, and guarantee the existence and uniqueness of solutions for the example
{\rm (\ref{application1})}.
\end{example}

\section{Concluding remarks and future works}

In this present work we managed to obtain some interesting results that contribute to the theory of fractional differential equations and applications, which are better characterized as follows:
\begin{itemize}
    \item At first we get some results through lemmas that help in the main results. 

    \item We investigated the existence, regularity and continuous dependence of $\varepsilon$-regular mild solutions for a new class of fractional abstract integro-differential equations in Banach space.

    \item We present two examples in order to elucidate one of the results discussed in this present article.

    \item We emphasize that the preliminary results as well as the main results, it is not an easy and simple task, to obtain estimates involving the Mittag-Leffler functions in $\mathbb{B}_{1+\varepsilon}$. Consequently, controlling the $\varepsilon$-mild solution as well is not an easy task. These points enrich the work and make the results more solid and important.
\end{itemize}

Once the present results are finalized, some questions naturally arose, namely:
\begin{itemize}
    \item What are the next steps of this present work, i.e., if there are possible applications of the results obtained here, in particular, involving computational methods or just a numerical approach to the problem {\rm(\ref{Eq1})}?

    Although theoretical results were obtained, what are the next issues to be discussed?
\end{itemize}

A priori we do not have a direct physical application of the results investigated here, since it requires a little more care in the spaces worked and in the estimates involving the Mittag-Leffler functions when trying to apply the results in real problems. However, we are interested in investigating some issues of integro-differential equations using computational methods, as addressed in the following papers {\rm\cite{pal,pal1,pal2,pal4}}, which at the moment we believe to be more plausible. In particular, motivated by the work of Ma and Huang {\rm\cite{pal3}}, where they use various methods such as Adomian, differential transform method, collocation method and Taylor expansion approach to discuss fractional integro-differential equations and present some numerical examples. We believe that in the near future, these results can be discussed, they certainly contribute to soft computations. In addition, it is also worth highlighting another possible application, using an efficient local meshless collocation algorithm to approximate the fractional time evolution model that is applied for the modeling of heat flux in materials with memory {\rm\cite{jose}}. The model is based on the Riemann-Liouville fractional integral. Note that in this model, the Laplacian and the fractional integral appear, which behave very well with the problem studied here. In the next points to be addressed as future problems, we will highlight another memory equation. A priori for a possible computational approach, these are the objectives that can be traced for future work.

Furthermore, a natural continuation of the present work is expected, as several issues have arisen during the process. Also, motivated by the fact that problems are open in theory, we highlight some of them below:

\begin{enumerate}
\item  A natural continuation of the present paper, it is a possible extension of the {\rm{\bf Theorem} \ref{theo1}}.

\item Through {\rm{\bf Theorem} \ref{theo1}} we can investigate the continuation result and a blow-up alternative for the mild solutions {\rm Eq. (\ref{solutionmild})}.
    
\item We know that $S_{\alpha}(v)=\mathcal{E}_{\alpha}(-v^{\alpha} \mathcal{A})$, does not satisfy semigroup properties \cite{Peng10}, that is,
\begin{equation}\label{l21}
        S_{\alpha}(v)S_{\alpha}(s) \neq S_{\alpha}(v+s).
\end{equation}
    
Since $S_{\alpha}(v)$ does not satisfy semigroup properties {\rm Eq. (\ref{l21})}, and is part of the mild solutions of FDEs, in particular, of {\rm Eq. (\ref{Eq1})}. One of the open problems is to try to obtain a better property for {\rm Eq. (\ref{l21})}, and consequently, to discuss what consequences it brings to the results investigated here. Furthermore, certainly a better property $S_{\alpha}(v)$, will not only impact the results investigated here, but in general the entire literature involving FDEs.
    
    \item Another open question of great value and impact for the area is to investigate any existence, uniqueness, regularity or any other type of property of mild solutions for fractional differential and integro-differential equations in the sense of the $\psi$-HFD with sectorial operators. This question requires strong and high-caliber results of analysis and differential equations to solve it.
    
\end{enumerate}

Finally, we will present an approach on strongly damped plate equation with memory, an application of the results investigated in this present paper, which serve as a continuation of this article. We know that fractional operators, one of the fundamental properties, is the memory effect.

Consider the strongly damped plate equation with memory
\begin{eqnarray}\label{80}
    u_{tt}=-\Delta^{2} u+ \mu \Delta u_{y}+\int_{0}^{t} a(t-s)(-\Delta)^{\beta} u(s)ds+|u|^{p-1} u,\,t>0\,\,x\in\Omega
\end{eqnarray}
with the conditions $u(0,x)=u_{0}(x)$ and $u_{t}(0,x)=u_{1}(x)$, where $\mu>0$, $\Omega$ is an open sufficiently smooth subset of $\mathbb{R}^{n}$, $a:[0,\infty)\rightarrow[0,\infty)$ is defined by $a(t)=t^{v}$, $\Delta^{2}$ is bi-harmonic with hinged boundary conditions and $\Delta$ is the Laplacian with Dirichlet boundary conditions in $L^{2}(\Omega)$.

The idea is to rewrite the strongly damped plate equation with memory in the fractional version in such a way that it mirrors the problem discussed in the article. So in that sense, we can rewrite {\rm Eq. (\ref{80})}, as follows
\begin{equation*}
 \left\{ 
 \begin{array}{cll}
 ^{H}\der_{0+}^{\alpha,\beta}w(t) & = & {\bf A_{-1}}w+\displaystyle\int_{0}^{t} a(t-s) G(wt)ds+F(w),\,\,t>0\\ 
 I^{1-\gamma}_{0+}w(0) & = & w_0,
	 \end{array}
	 \right.
	 \end{equation*}
where $w=[u\,\, v]^{t}$, $\mathcal{A}_{-1}$ is the $\mathcal{F}^{-1}$-realization of $\mathcal{A}$, $G$ and $F$ are maps defined by
$G\begin{pmatrix}\psi_{1} \\
\psi_{2}  
\end{pmatrix}$
=
$\begin{pmatrix} D \\
(-\Delta)^{\beta} \psi_{2}
\end{pmatrix}$
and
$F\begin{pmatrix}\psi_{1} \\
\psi_{2}  
\end{pmatrix}$
=
$\begin{pmatrix} 0 \\
 |\psi_{2}|^{p-1} \psi_{2}
\end{pmatrix}$
with 
$\psi$=$\begin{pmatrix}\psi_{1} \\
\psi_{2} .
\end{pmatrix}$

Finally, we conclude that the results investigated here, in addition to being new, enabled further future research according to the points raised above. In this sense, we believe that the results have a positive impact on the area and open up new perspectives for the future.

\section*{Data Availability}

My manuscript has no associate data.

\section*{ Acknowledgements}

All authors’ contributions to this manuscript are the same. All authors read and approved the final manuscript. We are very grateful to the anonymous reviewers for their useful comments that led to improvement of the manuscript.


\end{document}